\theoremstyle{thmstyleone}%
\newtheorem{theorem}{Theorem}%  meant for continuous numbers
\newtheorem{lem}[theorem]{Lemma}
\newtheorem{cor}[theorem]{Corollary}
\theoremstyle{thmstyletwo}%
\newtheorem{remark}{Remark}%
\theoremstyle{thmstylethree}%
\newtheorem{assum}{Assumption}
\newcommand{\la}{\lambda}
\newcommand{\bel}{\begin{equation} \label}
\newcommand{\ee}{\end{equation}}
\def\beq{\begin{equation}}
\def\eeq{\end{equation}}
\newcommand{\jump}[1]{\llbracket#1\rrbracket}
\newcommand{\sdd}{\mathcal{E}}
\renewcommand{\div}{\mathrm{div}\,}  %div
\def\epsilon{\varepsilon}
\providecommand{\abs}[1]{\left\lvert#1\right\rvert}
\providecommand{\norm}[1]{\left\lVert#1\right\rVert}
\newcommand{\gammaGLS}{\gamma_{\text{GLS}}}
\newcommand{\gammaCIP}{\gamma}
\newcommand{\VC}{V_\mathbb{C}}
\newcommand{\dX}{\mathrm{d}x}
\newcommand{\dS}{\mathrm{d}S}
\renewcommand\Re{\operatorname{Re}}
\begin{document}

\title[Unique continuation for the Lam\'{e} system using stabilized FEM]{Unique continuation for the Lam\'{e} system using stabilized finite element methods}

%%=============================================================%%
%% Prefix	-> \pfx{Dr}
%% GivenName	-> \fnm{Joergen W.}
%% Particle	-> \spfx{van der} -> surname prefix
%% FamilyName	-> \sur{Ploeg}
%% Suffix	-> \sfx{IV}
%% NatureName	-> \tanm{Poet Laureate} -> Title after name
%% Degrees	-> \dgr{MSc, PhD}
%% \author*[1,2]{\pfx{Dr} \fnm{Joergen W.} \spfx{van der} \sur{Ploeg} \sfx{IV} \tanm{Poet Laureate} 
%%                 \dgr{MSc, PhD}}\email{iauthor@gmail.com}
%%=============================================================%%

\author[1]{\fnm{Erik} \sur{Burman}}\email{e.burman@ucl.ac.uk}

\author*[1]{\fnm{Janosch} \sur{Preuss}}\email{j.preuss@ucl.ac.uk}
%\equalcont{These authors contributed equally to this work.}

\affil[1]{\orgdiv{Department of Mathematics}, \orgname{University College London}, \orgaddress{\street{Gower Street}, \city{London}, \postcode{WC1E 6BT},  \country{United Kingdom}}}

%%==================================%%
%% sample for unstructured abstract %%
%%==================================%%

\abstract{ 
We introduce an arbitrary order, stabilized finite element method for solving a unique continuation problem subject to the time-harmonic elastic wave equation with variable coefficients. 
Based on conditional stability estimates we prove convergence rates for the proposed method which take into account the noise level and the polynomial degree.
A series of numerical experiments corroborates our theoretical results and explores additional aspects, e.g.\ how the quality of the reconstruction depends on the geometry of the involved domains. We find that certain convexity properties are crucial to obtain a good recovery of the wave displacement outside the data domain and that higher polynomial orders can be more efficient but also more sensitive to the ill-conditioned nature of the problem.
}

% \abstract{\textbf{Purpose:} The abstract serves both as a general introduction to the topic and as a brief, non-technical summary of the main results and their implications. The abstract must not include subheadings (unless expressly permitted in the journal's Instructions to Authors), equations or citations. As a guide the abstract should not exceed 200 words. Most journals do not set a hard limit however authors are advised to check the author instructions for the journal they are submitting to.

\keywords{Lam\'{e} system, Unique continuation, Finite element methods, Conditional H\"older stability}

%%\pacs[JEL Classification]{D8, H51}

\pacs[MSC Classification]{35J15, 65N12, 65N20, 65N30, 86-08}

\maketitle

\section{Introduction}\label{sec:intro}
Ill-posed and inverse problems for elastodynamics occur in various geophysical \citep[Section 3]{YYP13} and medical applications \citep{D12}.
Analyzing these problems under realistic assumptions is subject to current research and often requires the use of sophisticated mathematical tools, see e.g.\ \citep{LR00,SUV21,BHKU22}
for the application of microlocal techniques to the recovery of material parameters from certain type of boundary measurements.
Furthermore, numerical methods which make optimal use of the latest analytical results and lead to provably convergent and reliable solutions of the inverse problems 
are in high demand. 
In this paper we aim to present such a method for the unique continuation problem of time-harmonic elastodynamics. 
Arguably, this is the simplest ill-posed problem encountered in this field. 
We consider this problem here because understanding of the stability properties of the continuous problem and the required tools for its numerical treatment are
now sufficiently advanced to allow for a fairly complete convergence analysis.  \par 
The unique continuation problem for the elastic wave equation is formulated as follows.
Let $\Omega \subset \mathbb{R}^d$, $d \geq 2$ be a bounded Lipschitz domain. 
Given $f \in [L^2(\Omega)]^d$ we seek to find the wave displacement $u \in V:= [H^1(\Omega)]^d$ fulfilling
\bel{eq:PDE-cont}
\mathcal{L}u = f  \text{ in } \Omega, 
\ee
where 
\bel{def:Lu} 
\mathcal{L}u := - \nabla \cdot \sigma(u) - \rho u, \quad 
\sigma(u) := 2 \mu \sdd(u) + \lambda \left( \nabla \cdot u \right) I, \quad  
\sdd(u) := \frac{1}{2} \left( \nabla{u} + \nabla{u}^T \right). 
\ee
The Lam\'{e} coefficients $\lambda(x),\mu(x)$ and the density $\rho(x)$ are assumed to be known.
If suitable boundary conditions on $\partial \Omega$ are given, then this problem is well-posed\footnote{Under reasonable regularity assumptions on the data.} and approximate solutions 
of any desired accuracy can be obtained using standard numerical methods.
However, here we will consider the case in which no information of the wave displacement on the boundary is provided. 
To partially compensate for this lack of information, we assume instead that measurements of $u$ in some open subset $\omega \subset \Omega$ are available, that is
\bel{eq:u-measurements-omega}
u  = u_{\omega} \quad  \text{ in } \omega. 
\ee
The objective is then to continue the solution into a larger subset $B \subset \Omega$.  
Note that this problem is ill-posed since continuous dependence on the data fails, i.e.\ an estimate of the form 
$\norm{u}_{B} \leq C  ( \norm{f}_{ \Omega } + \norm{u}_{ \omega} )$, where $B \subset \Omega$ such that $B \setminus \omega \neq \emptyset$,  
is in general not valid.
Here we introduced the shorthand $  \norm{f}_{ M } := \norm{f}_{ [L^2(M)]^d } $ for a subset $M \subset \mathbb{R}^d$.  
It is possible though to obtain (see \citep{LNW10,LNUW10} and \cref{section:cond-stability} for details) a \textit{conditional stability estimate} of the form 
\begin{equation}\label{eq:cond-stability-intro}
\norm{u}_{B} \leq C \left( \norm{f}_{ \Omega } +  \norm{u}_{\Omega}   \right)^{1-\tau} \left( \norm{f}_{ \Omega } + \norm{u}_{ \omega }  \right)^{\tau}
\end{equation}
on a subset $\omega \subset B \subset \Omega$ such that $B \setminus \omega$ does not touch the boundary of $\Omega$.
Note that the first factor in \cref{eq:cond-stability-intro} involves $\norm{u}_{\Omega}$ and that the ill-posedness of the problem increases with decreasing H{\"o}lder exponent $\tau \in (0,1)$. 
\par
A lack of well-posedness precludes the use of many established numerical methods (e.g. standard finite elements) which heavily rely on this property to obtain reliable approximate solutions. 
%The appearance of the first factor in \cref{eq:cond-stability-intro} precludes the use of many established numerical methods (e.g. standard finite elements) which require 
%well-posedness, in particular $\tau=1$, to obtain reliable approximate solutions. 
In order to apply these methods anyway, the continuous problem is usually approximated by a series of well-posed problems which are perturbations of the original problem. 
We will follow a different approach here based on casting the original data assimilation problem as a constrained optimization problem at the discrete level. 
This discrete problem is unstable since no regularization has been introduced at the continuous level. 
Subsequently, regularization will be added at the discrete level by utilizing stabilization terms well-known in the finite element community.
An appropriate choice thereof allows us to conduct an error analysis which exploits the conditional stability estimate of \cref{eq:cond-stability-intro}
and leads to explicit convergence rates (see \cref{thm:L2-error-perturbed-data}). \\ 
Our method is based on a general framework for noncoercive problems that has been introduced by \citep{B13} and was thereupon applied to a variety of problems including unique continuation and source reconstruction for the Poisson problem \citep{EHL18} as well as data assimilation for the heat \citep{BO18} and linearized Navier-Stokes equations \citep{BBFV20}.
Concerning time-harmonic wave equations, \citep{N20} treated unique continuation for the constant coefficient Helmholtz equation in his dissertation using piecewise affine finite elements, see also \citep{BNO19}. A hybridized high order method for the same problem has been analyzed in \citep{BDE21}.
In relation to the literature, the contributions of the paper on hand are as follows:
\begin{itemize}
\item We generalize the method from \citep{BNO19} to the case of elastic wave propagation, in particular we treat the Lam\'{e} system instead of the scalar Helmholtz equation.
\item Additionally, we carry out an error analysis for arbitrary polynomial orders and investigate the benefits of using higher order polynomials in numerical experiments. 
In contrast to the hybrid high order method presented in \cite{BDE21}, standard $H^1$-conforming finite elements are employed in this work. 
\item Whereas the publications \citep{BNO19,BDE21} treat the case of constant coefficients, we allow for a spatial dependence of the material parameters and present numerical experiments for the practically relevant setting of a jumping shear modulus. The shortcoming for working at this level of generality is that in contrast to the cited works our error analysis is not explicit in the wavenumber.  
\end{itemize}
The remainder of this paper is structured as follows.
In \cref{section:cond-stability} we give a precise statement of the conditional stability estimate of \cref{eq:cond-stability-intro}, whose 
actual derivation is deferred to \cref{section:Continuum_stability_estimate_derivation}. 
In \cref{section:Discretisation} we introduce a stabilized finite element method to numerically approximate the unique continuation problem from \cref{eq:PDE-cont}-\cref{eq:u-measurements-omega}.
\Cref{section:ErrorAnalysis} presents an analysis which leads to $L^2$-error estimates first for the case of unperturbed (\cref{thm:L2-error-unperturbed}) 
and then for perturbed data (\cref{thm:L2-error-perturbed-data}).
Numerical experiments that confirm our theoretical findings and investigate additional aspects are presented in \cref{section:numexp}. 
We finish with a conclusion and an outlook towards future research. 

\section{Conditional stability result for the continuous problem}\label{section:cond-stability}
Deriving unique continuation or conditional stability results for the Lam\'{e} system requires some regularity assumptions on the coefficients.
The foundation for the conditional stability estimate employed in this paper is a three ball inequality derived in \citep{LNUW10} which is based on 
the following assumption. 
\begin{assum}\label{ass:three_ball_ieq}
Let $\mu \in C^{0,1}(\Omega)$, and let $\lambda,\rho \in L^{\infty}(\Omega)$ satisfy
\bel{eq:assumption-coeff}
\left\{ \begin{array}{rcll} & \mu(x) \geq \delta_0, &  \lambda(x) + 2 \mu(x) \geq \delta_0 > 0  \quad  \forall x \in \Omega,  \\
&  \norm{\mu}_{C^{0,1}(\Omega)} +  \norm{\la}_{L^{\infty}(\Omega)} \leq M_0,  &  \norm{\rho}_{L^{\infty}(\Omega)}  \leq M_0   . \end{array}\right.
\ee
for some positive constants $\delta_0$ and $M_0$. Here, 
\[ \norm{g}_{C^{0,1}(\Omega)}  := \norm{g}_{L^{\infty}(\Omega) } + \norm{ \nabla g }_{L^{\infty}(\Omega) }. \]
\end{assum} 
The three ball inequality of \citep[Theorem 1.1]{LNUW10} takes the following form.
\begin{theorem}\label{thm:three-ball-ieq-homog}
Let the origin of $\mathbb{R}^d$ be contained in $\Omega$.
There exists a positive number $\tilde{R} < 1$, depending only on $d,M_0,\delta_0$, such that if 
\[ 0 < R_1 < R_2 < R_3 \leq R_0  \text{ and }  R_1/R_3 < R_2 / R_3 < \tilde{R}, \]
then 
\bel{eq:three-ball-ieq-homog}
\int\limits_{ \abs{x} < R_2 } \abs{u}^2 \mathrm{d}x  \leq 
C \left( \int\limits_{ \abs{x} < R_1 } \abs{u}^2 \mathrm{d}x \right)^{\tau}  
\left( \int\limits_{ \abs{x} < R_3 } \abs{u}^2 \mathrm{d}x \right)^{1 - \tau}  
\ee
for $u \in H^{1}_{\text{loc}}(B_{R_0})$ satisfying $\mathcal{L}u = 0$ in $B_{R_0}$, where the constant $C$ depends on $R_2 / R_3, d,M_0,\delta_0$, 
and $0 < \tau < 1$ depends on $R_1/R_3, R_2 / R_3, d, M_0, \delta_0$.
Moreover, for fixed $R_2$ and $R_3$, the exponent $\tau$ behaves like $1/(-\log R_1 )$ where $R_1$ is sufficiently small.
\end{theorem}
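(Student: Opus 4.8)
The plan is to obtain \cref{eq:three-ball-ieq-homog} as a consequence of a suitable \emph{Carleman estimate} for the operator $\mathcal{L}$, which is the standard route to quantitative unique continuation. The feature that makes the Lam\'e system more delicate than a scalar equation such as Helmholtz is that it is a coupled second-order system: the principal symbol of $u\mapsto-\nabla\cdot\sigma(u)$ is $\mu\abs{\xi}^2 I+(\lambda+\mu)\,\xi\otimes\xi$, which is diagonalisable with eigenvalue $\mu\abs{\xi}^2$ on $\xi^{\perp}$ (the shear part, multiplicity $d-1$) and $(\lambda+2\mu)\abs{\xi}^2$ along $\xi$ (the pressure part). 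Thus the system carries \emph{two distinct speeds}, and any weight used in a Carleman estimate must be compatible with both of them simultaneously. My first step would therefore be to bring $\mathcal{L}u=0$ into a form with essentially decoupled, scalar principal parts.

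The main obstruction to this reduction is the weak regularity of $\lambda$, which is only assumed to lie in $L^{\infty}(\Omega)$ by \cref{ass:three_ball_ieq}, whereas a Carleman estimate wants the coefficients of the principal part to be at least Lipschitz (as $\mu$ indeed is). Expanding $\nabla\cdot\sigma(u)$ using $\nabla\cdot\sdd(u)=\tfrac12(\Delta u+\nabla(\div u))$ produces the dangerous first-order term $(\div u)\,\nabla\lambda$. The device I would use is to absorb it into a gradient, writing $(\lambda+\mu)\nabla(\div u)+(\div u)\nabla\lambda=\mu\nabla(\div u)+\nabla(\lambda\div u)$, so that $\lambda$ is no longer differentiated and appears only through the scalar quantity $q:=\lambda\div u$. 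Treating $q$ as an auxiliary unknown and closing the system with a companion scalar equation for the pressure (again arranged so that neither $\lambda$ nor $\rho$ is differentiated) yields a system for $(u,q)$ whose principal part is block diagonal with the two Lipschitz coefficients $\mu$ and $\lambda+2\mu$, the couplings being of lower order with coefficients controlled by $\norm{\mu}_{C^{0,1}(\Omega)}\le M_0$.

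I would then prove a Carleman estimate for this diagonal system with a radial weight, for instance of the form $\phi(x)=\eta(\abs{x})$ with $\eta$ convex in $\log\abs{x}$, or an exponential variant $e^{-\gamma\eta}$ with $\gamma$ large. The point is to choose a single weight that satisfies the sub-ellipticity/pseudoconvexity condition for \emph{both} scalar operators $\mu\Delta$ and $(\lambda+2\mu)\Delta$ at once; since both principal symbols are positive multiples of $\abs{\xi}^2$ and differ only by the ratio of speeds (bounded by $M_0/\delta_0$), a radial weight can be tuned to work uniformly, with the large parameter $\tau$ and $\gamma$ depending only on $d,M_0,\delta_0$. The estimate must retain control of the full first-order jet (the $\tau^{3-2\abs{\alpha}}$-weighted sum over $\abs{\alpha}\le1$), so that the lower-order couplings and the $\nabla\mu$-terms can be absorbed on the left for $\tau$ large.

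Finally I would run the usual cut-off argument. Choosing a smooth $\chi$ that equals one on a shell containing $\{\abs{x}=R_2\}$ and vanishes for $\abs{x}\le R_1'$ and $\abs{x}\ge R_3'$ with $R_1<R_1'<R_2<R_3'<R_3$, I apply the Carleman estimate to $(\chi u,\chi q)$ and observe that the commutator $[\mathcal{L},\chi]$ is supported only in the two thin shells where $\nabla\chi\ne0$. Since the weight is monotone in $\abs{x}$, the inner commutator term is multiplied by the large factor $e^{2\tau\phi(R_1')}$ and the outer one by the small factor $e^{2\tau\phi(R_3')}$; bounding the left-hand side from below by the weight on $\{\abs{x}<R_2\}$ and then optimising over $\tau$ converts the estimate into the H\"older interpolation \cref{eq:three-ball-ieq-homog}. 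Tracking this optimisation gives an exponent comparable to $\log(R_3/R_2)/\log(R_3/R_1)$; for fixed $R_2,R_3$ one has $\log(R_3/R_1)\sim-\log R_1$ as $R_1\to0$, which yields the claimed asymptotic $\tau\sim1/(-\log R_1)$. I expect the construction and verification of the system Carleman estimate under the $L^{\infty}$ hypothesis on $\lambda$ — that is, the combination of the $\nabla\lambda$-removing reduction with the simultaneous pseudoconvexity for the two speeds — to be the genuinely hard part; once that estimate is available, the cut-off and interpolation steps are routine.
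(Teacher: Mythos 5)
The paper does not prove this statement at all: it is imported verbatim as \citep[Theorem 1.1]{LNUW10} (``The three ball inequality of [LNUW10] takes the following form''), so there is no in-paper proof to compare against. At the level of strategy your outline is a reasonable reconstruction of what that reference actually does: a reduction designed to avoid differentiating the merely bounded coefficient $\lambda$ (introducing an auxiliary scalar built from $\lambda\,\div u$ so that the principal part becomes essentially diagonal in the two Lipschitz speeds $\mu$ and $\lambda+2\mu$), a Carleman estimate with a radial weight for the reduced system, and the standard cut-off plus optimisation-in-the-large-parameter argument that yields the H\"older interpolation with $\tau\sim 1/(-\log R_1)$.

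As a proof, however, what you have written has a genuine gap, and you flag it yourself: the Carleman estimate for the reduced system is asserted, not established, and essentially all of the difficulty of the theorem is concentrated there. Specifically, it is not verified that the companion equation for $q:=\lambda\,\div u$ really closes without differentiating $\lambda$ or $\rho$; that the resulting couplings are genuinely of lower order with coefficients controlled only by $M_0$ and $\delta_0$, so that they can be absorbed on the left for large Carleman parameter; or that a single radial weight is simultaneously pseudoconvex for both scalar speeds uniformly over the admissible coefficient class --- which is exactly where the claimed dependence of $\tilde R$, $C$ and $\tau$ on only $d$, $M_0$, $\delta_0$ must be tracked. Without that estimate the cut-off and interpolation steps, routine as they are, have nothing to stand on. The appropriate resolution is either to cite \citep{LNUW10} as the paper does, or to supply the system Carleman estimate in full; the outline alone does not constitute a proof.
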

To obtain a conditional stability result from \cref{thm:three-ball-ieq-homog} that is suitable for our purpose, 
we require well-posedness of the interior impedance problem. 
Let us fix some notation before stating the required result. 
\begin{itemize}
\item As in the introduction let $V:= [H^1(\Omega)]^d$ denote the usual Sobolev space of real-valued functions with square integrable weak derivatives up to first order and 
$V_0:= [H^1_0(\Omega)]^d$ denote $V$ with homogeneous Dirichlet boundarty conditions included. 
		We use a prime to denote the corresponding dual spaces, i.e.\ $V^{\prime}$ and $V_0^{\prime}$.
\item  Let $\VC$ denote the Sobolev space $[H^1(\Omega)]^d$ of functions taking values in the complex numbers with inner product 
$(u,v)_{\VC} := \int\limits_{\Omega} ( \nabla u \nabla \bar{v} +  u \bar{v} ) \; \dX$ and $\VC^{\prime}$ denote its dual space.
The corresponding space with homogeneous Dirichlet boundary conditions and its dual are denoted by $V_{\mathbb{C},0}$ and $V_{\mathbb{C},0}^{\prime}$, respectively.
\end{itemize}
\begin{assum}\label{ass:IIP_Robin}
Let $\Omega$ be a bounded Lipschitz domain, $f \in \VC^{\prime}$ and $k > 0$. 
We assume that there exists a unique solution $ u \in \VC $ of the problem 
\bel{eq:PDE-IIP}
\left\{ \begin{array}{rcll} &\mathcal{L} u &=  f & \text{ in } \Omega ,\\
&  \sigma(u) \cdot \mathbf{n}_{\partial \Omega} + i ku & = 0 \quad & \text{ on } \partial \Omega,  \end{array}\right.
\ee
fulfilling the stability bound
\bel{eq:stability_IIP}
\norm{u}_{\VC} \leq C \norm{f}_{\VC^{\prime}}.
\ee
Here, $\mathbf{n}_{ \partial \Omega}$ denotes the exterior normal vector on $\partial \Omega$. The constant $C$ 
is assumed to be independent of $u$ and $f$ but may depend on the material parameters, $k$ and the domain $\Omega$.
\end{assum}
If $\partial \Omega$ and the Lam\'{e} coefficients are sufficiently smooth, then \cref{ass:IIP_Robin} follows by exploiting elliptic regularity. 
Indeed, as Korn's inequality and the assumption $\lambda(x) + 2 \mu(x) > 0 $ yield a G{\aa}rding inequality, the Fredholm alternative 
implies the desired well-posedness provided that uniqueness can be shown. 
To this end, note that a solution of \cref{eq:PDE-IIP} with $f=0$ has to vanish on $\partial \Omega$ which follows by taking the imaginary part of the weak 
formulation using $\bar{u}$ as a test function. If one can now show that $\sigma(u) \cdot \mathbf{n}_{\partial \Omega}$ vanishes as well on $\partial \Omega$, 
then $u$ can be extended by zero to an $H^1$-solution in all of $\mathbb{R}^d$ which implies by \cref{thm:three-ball-ieq-homog} that it must vanish everywhere.
At this point, smoothness assumptions are required to obtain that $\mathcal{L}u = 0$ in $\Omega$ which implies vanishing of $\sigma(u) \cdot \mathbf{n}_{\partial \Omega}$ 
on the boundary using integration by parts. \par 
For applications to high-frequency wave propagation it is important to understand how the constant $C$ in the stability bound given in \cref{eq:stability_IIP} depends on 
the coefficient $\rho$ in \cref{def:Lu}. 
According to the next lemma, for a homogeneous medium the dependence is fortunately no worse than linear. 
\begin{lem}\label{lem:IIP-Robin}
%\item If $\mu, \lambda, \rho$ and $\partial \Omega$ are sufficiently regular, 
%\todo[inline]{I need to check regularity results for elliptic systems to see how low we can go here.} 
%there exists a unique solution $ u \in \VC $ of the problem 
%\item 
Let $\Omega$ be a bounded Lipschitz domain, $f \in \VC^{\prime}$ and $k>0$. 
If $\mu, \lambda$ and $\rho=k^2$ are constant with $k \geq 1$ and $d=3$, then
\begin{equation}\label{eq:stability_IIP_k}
\norm{\nabla u}_{ \Omega } + k \norm{ u}_{ \Omega } \leq C k^2  \norm{f}_{\VC^{\prime}},  
\end{equation}
for the solution $u$ of \cref{eq:PDE-IIP} holds with $C$ being independent of $k$.
%\end{enumerate} 
\end{lem}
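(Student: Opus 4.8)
The plan is to obtain the bound by the Morawetz--Rellich multiplier technique, first for data in $[L^2(\Omega)]^d$ and then, by duality, for data in $\VC'$, which is where the two powers of $k$ will come from. I start from the weak form of \cref{eq:PDE-IIP}: for all $v\in\VC$,
\[ \int_\Omega \sigma(u):\overline{\sdd(v)}\,\dX - k^2\int_\Omega u\cdot\bar v\,\dX + ik\int_{\partial\Omega} u\cdot\bar v\,\dS = \langle f,v\rangle. \]
Testing with $v=u$ and taking the imaginary part gives $k\norm{u}_{\partial\Omega}^2=\Im\langle f,u\rangle$, which controls the boundary $L^2$-norm. Taking the real part yields $\int_\Omega W(u)\,\dX = k^2\norm{u}_\Omega^2+\Re\langle f,u\rangle$, where $W(u):=2\mu\abs{\sdd(u)}^2+\lambda\abs{\div u}^2$; combined with the G\aa rding inequality guaranteed by the strong ellipticity $\mu,\lambda+2\mu\ge\delta_0$ of \cref{ass:three_ball_ieq}, this gives $\norm{\nabla u}_\Omega^2\lesssim k^2\norm{u}_\Omega^2+\abs{\langle f,u\rangle}$. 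Hence the gradient is controlled once $k\norm{u}_\Omega$ is, and the whole task reduces to bounding $k^2\norm{u}_\Omega^2$.

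The main step is a Rellich-type identity obtained by testing the strong form of \cref{eq:PDE-IIP} against the multiplier $Mu:=(x-x_0)\cdot\nabla u+\tfrac{d-2}{2}u$ and taking twice the real part. Because the coefficients are constant, the elastic energy density satisfies $2\Re[\sigma(u):\overline{(\beta\cdot\nabla)\nabla u}]=(\beta\cdot\nabla)W(u)$ with $\beta:=x-x_0$, so after integration by parts the bulk contribution $\tfrac{(2-d)}{1}\int_\Omega W(u)\,\dX+dk^2\norm{u}_\Omega^2$ arising from the first-order part of $M$ is exactly cancelled, in its $\int_\Omega W(u)\,\dX$ component, by the zeroth-order part, collapsing the bulk to $2k^2\norm{u}_\Omega^2$ up to boundary terms. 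For $f\in[L^2(\Omega)]^d$ the right-hand pairing obeys $\int_\Omega f\cdot\overline{Mu}\,\dX\le\norm{f}_\Omega\norm{Mu}_\Omega\lesssim\norm{f}_\Omega(\norm{\nabla u}_\Omega+\norm{u}_\Omega)$ and requires no extra regularity of $u$; after inserting the gradient bound of the first step a Young inequality absorbs it into $k^2\norm{u}_\Omega^2$ at the price of a term $\sim\norm{f}_\Omega^2$.

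The boundary terms produced by the integration by parts are the crux and I expect them to be the hardest part: they comprise the impedance flux $\sigma(u)\mathbf{n}_{\partial\Omega}=-iku$ paired with $Mu$, the normal-weighted energy $\int_{\partial\Omega}(\beta\cdot\mathbf{n}_{\partial\Omega})W(u)\,\dS$, and $-k^2\int_{\partial\Omega}(\beta\cdot\mathbf{n}_{\partial\Omega})\abs{u}^2\,\dS$. Choosing the centre $x_0$ so that $\beta\cdot\mathbf{n}_{\partial\Omega}\ge0$, and using both the impedance condition and the boundary control $k\norm{u}_{\partial\Omega}^2=\Im\langle f,u\rangle$ from the first step, these terms can be given a favourable sign or absorbed, at the cost of at most one power of $k$; the dimension $d=3$ enters through the trace and Sobolev inequalities used here, in particular to handle the tangential-gradient part hidden in $\int_{\partial\Omega}(\beta\cdot\mathbf{n}_{\partial\Omega})W(u)\,\dS$, which carries no a priori $L^2(\partial\Omega)$ bound. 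A further technical point is that the Rellich identity must be justified on a merely Lipschitz boundary, which I would do by a density/approximation argument or by invoking the interior regularity of constant-coefficient solutions. The outcome of this stage is the $L^2$-datum estimate $\norm{\nabla u}_\Omega+k\norm{u}_\Omega\le Ck\norm{f}_\Omega$.

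Finally I would upgrade this to the claimed $\VC'$-datum estimate by duality. The adjoint of \cref{eq:PDE-IIP} is the impedance problem with $k$ replaced by $-k$, to which the same multiplier argument applies; so, given $g\in[L^2(\Omega)]^d$, its solution $w$ satisfies $\norm{w}_{\VC}\le Ck\norm{g}_\Omega$. Testing the original equation with $v=w$ and using the defining property of $w$ gives $\int_\Omega u\cdot\bar g\,\dX=\langle f,w\rangle\le\norm{f}_{\VC'}\norm{w}_{\VC}\le Ck\norm{f}_{\VC'}\norm{g}_\Omega$, whence $\norm{u}_\Omega\le Ck\norm{f}_{\VC'}$. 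Feeding this back into the G\aa rding/real-part inequality of the first paragraph controls $\norm{\nabla u}_\Omega$ as well and delivers $\norm{\nabla u}_\Omega+k\norm{u}_\Omega\le Ck^2\norm{f}_{\VC'}$. The two powers of $k$ thus arise separately: one from the $k$-loss incurred by the boundary terms of the Morawetz estimate on a general Lipschitz domain, and one from the duality step passing from $L^2$ to $\VC'$ data.
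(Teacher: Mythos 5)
The decisive difference is that the paper does not prove the $L^2$-datum estimate $\norm{\nabla u}_{\Omega}+k\norm{u}_{\Omega}\leq Ck\norm{f}_{\Omega}$ at all: it is imported wholesale from \citep[Theorem 2.7]{BG16}, and the entire content of the paper's proof is the norm-weakening step from $\norm{f}_{\Omega}$ to $\norm{f}_{\VC^{\prime}}$. You instead attempt to establish the $L^2$-datum bound yourself via a Morawetz--Rellich multiplier, and that is where your argument has a genuine gap. First, choosing $x_0$ so that $(x-x_0)\cdot\mathbf{n}_{\partial\Omega}\geq 0$ requires $\Omega$ to be star-shaped with respect to $x_0$, which a general bounded Lipschitz domain (the hypothesis of \cref{lem:IIP-Robin}) need not be. Second, the step you yourself identify as the crux --- showing that the boundary terms, in particular $\int_{\partial\Omega}(\beta\cdot\mathbf{n}_{\partial\Omega})W(u)\,\dS$, ``can be given a favourable sign or absorbed at the cost of at most one power of $k$'' --- is asserted rather than proved. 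For the Lam\'{e} system this is precisely the delicate point: the impedance condition only controls $\sigma(u)\cdot\mathbf{n}_{\partial\Omega}$, not the full surface energy density, and the decomposition of $W(u)$ on $\partial\Omega$ does not have the favourable normal/tangential structure of the scalar Helmholtz case. This is why \citep[Proposition 4.3]{CFN19} needs a smoother, star-shaped boundary to improve the constant, and why the paper simply cites \citep{BG16} here. As written, your first stage does not yield \cref{eq:stability_IIP_k} with $k$ in place of $k^2$ on a Lipschitz domain.

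Your final stage, by contrast, is a correct and genuinely different route to the part the paper actually proves. Granting the $L^2$-datum bound, your duality argument (solve the adjoint impedance problem with datum $g\in[L^2(\Omega)]^d$, obtain $\norm{u}_{\Omega}\leq Ck\norm{f}_{\VC^{\prime}}$, then recover the gradient from the G{\aa}rding inequality) delivers the same $k^2$ as the paper. The paper instead shifts the form by $(C^{\prime}+2k^2)(u,\bar{v})_{\Omega}$ to make it coercive in the $k$-weighted norm, solves the coercive problem by Lax--Milgram, and writes $u=u_0+w$ with $w$ solving the impedance problem with the $L^2$ right-hand side $(C^{\prime}+2k^2)u_0$; the extra power of $k$ then comes from $\norm{(C^{\prime}+2k^2)u_0}_{\Omega}\leq Ck\norm{u_0}_{[H^1_k(\Omega)]^d}$. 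Both devices are standard and equivalent in outcome; your duality version is arguably cleaner, but it relies on the adjoint problem satisfying the same $L^2$ bound, which again traces back to the very estimate your first stage fails to secure. To repair the proposal, either cite \citep[Theorem 2.7]{BG16} for the $L^2$-datum estimate or restrict to star-shaped domains and carry out the boundary-term analysis in full.
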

\begin{proof}
Given in \cref{section:Continuum_stability_estimate_derivation}.
\end{proof}
Let us remark that \cref{lem:IIP-Robin} is obtained as a Corollary from \citep[Theorem 2.7]{BG16} in which the authors proved
\cref{eq:stability_IIP_k} with $\norm{f}_{ \Omega }$ on the right hand side and a factor of $k$. 
Actually, a sharper bound which is $\mathcal{O}(1)$ in $k$ has been obtained in \citep[Proposition 4.3]{CFN19} by imposing stronger smoothness assumptions on $\partial \Omega$ and requiring that $\Omega$ is star-sharped. 
Hence, under these additional assumptions the bound in \cref{eq:stability_IIP_k} could be lowered from $k^2$ to $k$.  \par 
Utilizing well-posedness of \cref{eq:PDE-IIP} allows to mold the three-ball inequality of \cref{thm:three-ball-ieq-homog} into a form which is suitable for the numerical analysis in \cref{section:ErrorAnalysis}. 
\begin{cor}\label{cor:cond-stability}
Let $u$ be a solution of $\mathcal{L} u = f \in V_0^{\prime}$.
Consider subdomains $\omega \subset B \subset \Omega$ such that $B \setminus \omega$ does not touch the boundary of $\Omega$.  
Then there exists a constant $C>0$ and $\tau \in (0,1)$ such that
\begin{equation}\label{eq:cond-stability}
\norm{u}_{L^2(B)} \leq C \left( \norm{f}_{ V_{0}^{\prime} } +  \norm{u}_{[L^2(\Omega)]^d}   \right)^{1-\tau} \left( \norm{f}_{ V_{0}^{\prime} } + \norm{u}_{ [L^2(\omega)]^d}  \right)^{\tau}.
\end{equation}
\end{cor}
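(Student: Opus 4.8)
The plan is to reduce the inhomogeneous equation $\mathcal{L}u=f$ to the homogeneous one, and then to upgrade the purely local three-ball estimate of \cref{thm:three-ball-ieq-homog} into a global bound on $B$ by propagation of smallness along a chain of overlapping balls. For the source term, let $\phi$ solve the interior impedance problem \cref{eq:PDE-IIP} with right-hand side $f$; by \cref{ass:IIP_Robin} it satisfies $\norm{\phi}_{\VC}\le C\norm{f}_{\VC'}$, and in particular $\norm{\phi}_{\Omega}\le C\norm{\phi}_{\VC}\le C\norm{f}_{\VC'}$. Then $v:=u-\phi$ obeys $\mathcal{L}v=0$ in $\Omega$, so the homogeneous estimate (derived below) $\norm{v}_{B}\le C\,\norm{v}_{\Omega}^{1-\sigma}\norm{v}_{\omega}^{\sigma}$ applies. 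Writing $u=v+\phi$, applying the triangle inequality to each of the three norms, and using that any standalone $\norm{f}_{V_0'}$-term can be reabsorbed via $\norm{f}_{V_0'}=\norm{f}_{V_0'}^{1-\sigma}\norm{f}_{V_0'}^{\sigma}\le(\norm{f}_{V_0'}+\norm{u}_{\Omega})^{1-\sigma}(\norm{f}_{V_0'}+\norm{u}_{\omega})^{\sigma}$ together with concavity of $t\mapsto t^{\sigma}$ to recombine the products, then yields \cref{eq:cond-stability} with $\tau=\sigma$.

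The core is the homogeneous estimate. Since $\overline{B\setminus\omega}$ is a compact subset of $\Omega$, it has positive distance to $\partial\Omega$, so I would fix radii $R_1<R_2<R_3$ satisfying the ratio conditions of \cref{thm:three-ball-ieq-homog} and, for each point of $B\setminus\omega$, a finite chain of balls $B_{R_3}(x_0),\dots,B_{R_3}(x_N)\subset\Omega$ joining a ball inside $\omega$ to that point, arranged so that the middle ball $B_{R_2}(x_j)$ of one link contains the small ball $B_{R_1}(x_{j+1})$ of the next. Setting $\epsilon:=\norm{v}_{\omega}$ and $E:=\norm{v}_{\Omega}$, \cref{eq:three-ball-ieq-homog} on the first ball gives $\norm{v}_{B_{R_2}(x_0)}\le C\,\epsilon^{\tau}E^{1-\tau}$, and feeding each middle ball into the next application and iterating through the chain yields $\norm{v}_{B_{R_2}(x_j)}\le C\,\epsilon^{\tau^{j+1}}E^{1-\tau^{j+1}}$ for $0\le j\le N$. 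Since $\epsilon\le E$ and the chain has finitely many links, setting $\sigma:=\tau^{N+1}\in(0,1)$ bounds every middle ball by $C\,\epsilon^{\sigma}E^{1-\sigma}$; as these middle balls cover $\overline{B\setminus\omega}$ and the part of $B$ lying in $\omega$ is controlled directly by $\epsilon$, summing the finitely many contributions gives $\norm{v}_{B}\le C\,E^{1-\sigma}\epsilon^{\sigma}$.

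I expect the propagation/covering step to be the main obstacle. One must verify that finitely many balls of fixed radii, whose largest balls $B_{R_3}$ all stay inside $\Omega$, suffice to reach every point of $B$ from $\omega$ with a uniform bound on the number of links $N$ (a compactness argument), which implicitly requires connectedness of the region being continued into; and one must check that the Hölder exponent degrades only from $\tau$ to the strictly positive $\tau^{N+1}$ while the accumulated constant stays finite. A secondary technical point is the functional-analytic bookkeeping for the source: $f$ is prescribed in $V_0'$, whereas \cref{ass:IIP_Robin} controls the impedance lifting in the complex dual norm $\norm{\cdot}_{\VC'}$, so one should either identify $f$ with an element of $\VC'$ of comparable norm or carry out the lifting directly against the norm appearing in \cref{eq:cond-stability}.
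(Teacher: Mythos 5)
Your proposal follows essentially the same route as the paper: subtract an impedance-problem lifting of the source (via \cref{lem:IIP-Robin}/\cref{ass:IIP_Robin}) to reduce to $\mathcal{L}v=0$, apply \cref{thm:three-ball-ieq-homog}, and recombine with the triangle inequality and absorption of the standalone $\norm{f}_{V_0'}$ term. The chain-of-balls propagation you spell out (with the exponent degrading to $\tau^{N+1}$) is correct and is exactly the covering argument the paper outsources to a citation of Alessandrini et al., so no issue there. The one loose end you flag --- that $f$ lives in $V_0'$ while the impedance lifting is controlled in $\norm{\cdot}_{\VC'}$ --- is precisely where the paper's proof differs in detail: instead of lifting $f$ itself, it fixes a cutoff $\chi\in C^\infty_0(\Omega)$ with $\chi\equiv 1$ on a compactly contained $\Omega_1\supset B_{R_3}(x_0)$ and solves the impedance problem with right-hand side $\chi\mathcal{L}u$, which defines a genuine element of $\VC'$ with $\norm{\chi\mathcal{L}u}_{\VC'}\leq C\norm{\mathcal{L}u}_{V_0'}$ because $\chi\phi\in V_{\mathbb{C},0}$ for any $\phi\in\VC$. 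The price is that $\tilde{w}=u-\tilde{u}$ is only $\mathcal{L}$-harmonic on $\Omega_1$ rather than on all of $\Omega$, which is harmless since the three-ball inequality is only invoked inside $\Omega_1$. Your alternative (a norm-preserving Hahn--Banach extension of $f$ to $\VC'$) would also work, but you should commit to one of the two mechanisms rather than leave the point open, since without it the quantity $\norm{f}_{\VC'}$ appearing in your first paragraph is not defined.
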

\begin{proof}
Given in \cref{section:Continuum_stability_estimate_derivation}.
\end{proof}

\section{Discretisation}\label{section:Discretisation}

In this section we introduce a stabilized finite element method to numerically approximate the unique continuation problem given in \cref{eq:PDE-cont}-\cref{eq:u-measurements-omega}.
In \cref{ssection:FEspaces} triangulations and finite element spaces are defined. 
We proceed in \cref{ssection:Lagrangian} by defining a Lagrangian functional from which numerical approximations to the wave displacement will be obtained as saddle points.
In \cref{ssection:Stab} we specify stabilization terms for this Lagrangian.
Suitable norms for the error analysis are presented in \cref{ssection:Norms}.
Interpolation operators and certain stability estimates for them are considered in \cref{ssection:Interp}.

\subsection{Finite element spaces}\label{ssection:FEspaces}
For the analysis it will be assumed that the domain $\Omega$ is polygonal. 
This is consistent with the regularity requirement on $\Omega$ stated in \cref{ass:IIP_Robin}.
Consider then a family $ \mathcal{T} = \{ \mathcal{T}_h \}_{ h >0}$ of triangulations of $\Omega$ consisting of simplices $K \in \mathcal{T}_h$ such that the intersection of any two 
distinct ones is either a common vertex, a common edge or a common face. 
Further assume that the family $\mathcal{T}$ is quasi-uniform and fitted to the subsets $\omega$ and $B$.
Let $\mathcal{F}_i$ denote the set of all interior facets of the triangulation.
Let $X_h^{p}$ be the standard $H^1$-conforming finite element space of piecewise polynomials of order $p$ on $\mathcal{T}_h$. 
Define 
\bel{eq:FEM-spaces}
V_h^p:= [X_h^p]^d, \qquad W_h^p := V_h^p \cap V_0. 
\ee
For ease of notation the superscript $p$ will usually be omitted below.

\subsection{Lagrangian and optimality conditions}\label{ssection:Lagrangian}

The weak formulation of the partial differential equation (PDE) constraint in \cref{eq:PDE-cont} is given by: Find $u \in V$  such that $a_h(u,v) = (f,v)_{\Omega} $ for all $v \in V_0$, where 
\bel{eq:blf}
%a_h(u,v) := \int\limits_{\Omega} \left[ 2 \mu \sdd(u):\sdd(v) + \lambda \div(u) \div(v) - \rho uv  \right] \mathrm{d}x.
a_h(u,v) := \int\limits_{\Omega} \left[ \sigma(u):\sdd(v) - \rho uv  \right] \mathrm{d}x, 
\ee
and $(v,w)_{M} := (v,w)_{ [L^2(M)]^d }$ for any $M \subset \mathbb{R}^d$.
Following \citep{BNO19,N20} we define the Lagrangian 
%\bel{eq:Lagrangian}
%L(u_h,z_h) := \frac{1}{2} \norm{u_h - u_{\omega} }_{ [L^2(\omega)]^d}^2 + \frac{1}{2} s_{\gamma}(u_h-u,u_h-u) + \frac{1}{2} s_{\alpha}(u_h,u_h)  - \frac{1}{2} s^{\ast}(z_h,z_h) + a_h(u_h,z_h) - (f,z_h)_{[L^2(\Omega)]^d},
%\ee
\begin{align}
\begin{aligned}	\label{eq:Lagrangian}
L(u_h,z_h) &=  \frac{1}{2} \norm{u_h - u_{\omega} }_{ \omega }^2 + \frac{1}{2} s_{\gamma}(u_h-u,u_h-u) + \frac{1}{2} s_{\alpha}(u_h,u_h)  \\ 
	& - \frac{1}{2} s^{\ast}(z_h,z_h)  + a_h(u_h,z_h) +  s_{\beta}(u_h,z_h) - (f,z_h)_ {\Omega} 	
\end{aligned}
\end{align}
which contains aside from the data fidelity and PDE constraint four stabilization terms $s_{\gamma},s_{\alpha}$,$s_{\beta}$  and $s^{\ast}$ that will be specified later.
Since a solution $u$ of \cref{eq:PDE-cont} is explicitly inserted in $s_{\gamma}$ above, we have to be careful in choosing this stabilization so that it 
can indeed be implemented using only the given data $f$ and $u_{\omega}$. We will see below in \cref{eq:s_gamma_u_vh_known} that this is indeed the case. \par 
The first order optimality conditions lead to the equations: Find $(u_h,z_h) \in V_h \times W_h$ such that  
\bel{eq:opt-cond}
\begin{array}{rcll} & (u_h,v_h)_{ \omega } + s_{\gamma}(u_h-u,v_h) + s_{\alpha}(u_h,v_h) + a_h(v_h,z_h) +  s_{\beta}(v_h,z_h)  &=  (u_{\omega},v_h)_{ \omega },  \\
 & a_h(u_h,w_h) - s^{\ast}(z_h,w_h) +  s_{\beta}(u_h,w_h)    &= (f,w_h)_{ \Omega }     \end{array}
\ee
for all $(v_h,w_h) \in V_h \times W_h$.
This can be written in the compact form: 
Find $(u_h,z_h) \in V_h \times W_h$ such that  
\begin{align}
\begin{aligned}	\label{eq:opt-compact}
A[(u_h,z_h),(v_h,w_h)] &= (u_{\omega},v_h)_{ \omega } + s_{\gamma}(u,v_h) + (f,w_h)_{ \Omega } \quad \forall (v_h,w_h) \in V_h \times W_h, 
\end{aligned}
\end{align}
with 
\begin{align}
\begin{aligned}	\label{eq:def-A}
A[(u_h,z_h),(v_h,w_h)] :&= (u_h,v_h)_{ \omega } + s_{\gamma}(u_h,v_h) + s_{\alpha}(u_h,v_h) + a_h(v_h,z_h)   \\
                        & +  s_{\beta}(v_h,z_h) - s^{\ast}(z_h,w_h) + a_h(u_h,w_h) +  s_{\beta}(u_h,w_h).   
\end{aligned}
\end{align}

\subsection{Stabilization}\label{ssection:Stab}
In this section we introduce suitable stabilization terms which are crucial for our method to operate properly. 
For well-definedness and consistency of the stabilization some regularity assumptions on the coefficients 
are required. 
To keep the exposition clear, we will first introduce all stabilization terms in \cref{sssection:stab_def} under the assumption of smooth coefficients 
and then specify in \cref{sssection:stab_reg} the minimal regularity requirement under which specific parts of the stabilization can be activated. 

\subsubsection{Definition}\label{sssection:stab_def}  
We start by introducing a notation for the $j$-th order jumps of the stress $\sigma(u)$ in normal direction over the interior facets:
\bel{eq:CIP-full-gradient}
%J(u_h,v_h) :=  \sum\limits_{F \in \mathcal{F}_i} \int\limits_{F} h \jump{ \nabla u_h } \jump{ \nabla v_h } \; \mathrm{d}S.
%J(u_h,v_h) :=  \sum\limits_{F \in \mathcal{F}_i} \int\limits_{F} h \jump{ \sigma(u_h) \cdot \mathbf{n} } \jump{ \sigma(v_h) \cdot \mathbf{n} } \; \mathrm{d}S.
J_j(u_h,v_h) := \sum\limits_{F \in \mathcal{F}_i} \int\limits_{F} h^{2j-1} \jump{  (\nabla^{j-1} \sigma(u_h)) \cdot \mathbf{n}  } \jump{ (\nabla^{j-1} \sigma(v_h)) \cdot \mathbf{n}  } \; \mathrm{d}S, \quad j \geq 1.  
\ee
Here, the jump over a facet $F = K_1 \cap K_2$ for two neighboring simplices $K_1,K_2 \in \mathcal{T}_h$ is defined as 
\bel{eq:facet_jump_def}
\jump{ (\nabla^{j-1} \sigma(u_h))  \cdot \mathbf{n} } := \left.(\nabla^{j-1} \sigma(u_h))\right|_{K_1} \cdot \mathbf{n}_1 + \left.(\nabla^{j-1} \sigma(u_h))\right|_{K_2} \cdot \mathbf{n}_2,
\ee
where $\mathbf{n}_i$ are the outward pointing normal vectors of $K_i,i=1,2$.
These jump terms appear in both stabilizers $s_{\beta}$ and $s_{\gamma}$, albeit are applied to different variables and fulfill a separate purpose. 
The stabilizer
\bel{eq:s_beta-def}
s_{\beta}(u_h,w_h) := \sum\limits_{j=1}^{p} \beta_j J_j(u_h,w_h) 
\ee
for penalty parameters $\beta_j \in \mathbb{R}, j=1,\ldots,p$, represents a perturbation of the original PDE constraint whose effectiveness to mitigate 
polution effects will be investigated in numerical experiments (see \cref{ssection:numexp_pollution}).  
This is inspired by the well-known continuous interior penalty (CIP)-FEM for the Helmholtz equation, see e.g. \citep{W13,ZW13,DH15,ZW22}. 
Let us mention that this stabilization term is optional in the sense that the final error estimate stated in \cref{thm:L2-error-perturbed-data} holds even for $\beta_j = 0, j=1,\ldots,p$.
\par 
Jump terms also appear as part of the stabilizer 
\bel{eq:s_gamma-def}
s_{\gamma}(u_h,v_h) := \sum\limits_{j=1}^{p}  \gammaCIP_{j} J_j(u_h,v_h) + \gammaGLS h^2 (\mathcal{L}u_h,\mathcal{L} v_h)_{ \mathcal{T}_h },
\ee
which is required to guarantee unique solvability of \cref{eq:opt-compact}.
Here, 
\[ h^2 (\mathcal{L}u_h,\mathcal{L} v_h)_{ \mathcal{T}_h } := h^2 \sum\limits_{K \in \mathcal{T}_h} (\mathcal{L}u_h,\mathcal{L} v_h)_{K},
\] 
is a Galerkin least squares stabilization. 
We will require that the penalty parameters satisfy
\bel{eq:penalty-param-assumption}
\gammaCIP_1 > 0 \text{ and } \gammaCIP_j \geq \max\{0,\abs{\beta_j}\} \text{ for } j=2,\ldots,p  \text{ and } \gammaGLS > 0.
\ee
Similar as in \citep{BDE21},  we additionally add a discrete Tikhonov regularization term
\bel{eq:s_alpha-def}
s_{\alpha}(u_h,v_h) := \alpha h^{2p} (u_h,v_h)_{ \Omega },
\ee
for some $\alpha >0$ to control the $L^2$-norm of the approximation $u_h$ on all of $\Omega$.  \par
The stabilization for the dual variable is defined as 
\bel{eq:s_star_h-def}
s^{\ast}(z_h,w_h) := \int\limits_{\Omega} \nabla z_h : \nabla w_h \; \mathrm{d}x. 
\ee

\subsubsection{Regularity requirements}\label{sssection:stab_reg}  
We will propose a numerical method that is well-defined for jumping shear moduli as they occur in practical applications, e.g.\ in seismology.
Even though such jumps will violate the regularity \cref{ass:three_ball_ieq} for the three ball inequality of \cref{thm:three-ball-ieq-homog} on which our error estimates will be based,
we can nevertheless implement our method and carry out numerical experiments if jumps occur, see \cref{ssection:numexp_jump_shear}. \par
In the Galerkin least squares stabilization given in \cref{eq:s_gamma-def} the strong form of the differential operator $\mathcal{L}$ is applied in an element-wise fashion.
This requires the following assumption on the Lam\'{e} coefficients.
\begin{assum}\label{ass:lame_reg_FEM}
	We will assume that the meshes $\mathcal{T}_h$ can be constructed so that possible singularities of $\mu$ and $\lambda$ only occur on element edges (for $d=2$) or faces (for $d=3$), that is 
	$\mu, \lambda \in H^1(\mathcal{T}_h)$, where
\[ 
H^1(\mathcal{T}_h) := \{ v \in L^2(\Omega) \mid \forall K \in \mathcal{T}_h, v|_{K} \in H^1(K)  \}. 
\]
\end{assum}
This assumption appears to be realistic for applications in global seismic wave propagation in which meshes are usually contructed to respect 
the singularities of stratified reference earth models, see e.g.\ \citep[Figure 6]{KT02}. \par
Next we will discuss the other contribution in \cref{eq:s_beta-def} and \cref{eq:s_gamma-def}, i.e.\ the jump terms over the facets.  
The analysis presented in \cref{section:ErrorAnalysis} requires that these terms are consistent, i.e.\ if $u$ is a weak solution of 
\cref{eq:PDE-cont}-\cref{eq:u-measurements-omega}, then 
\bel{eq:CIP_consistency}
\gammaCIP_{j} J_j(u,v) = \beta_{j} J_j(u,v) =  0 \text{ for } j=1,\ldots,p  
\ee
and $v$ in $V+V_h$ has to hold.
Since the meshes are assumed to be aligned with discontinuities of the Lam\'{e} coefficients, this automatically holds for $j=1$ as weak solutions 
are required to satisfy $\jump{  \sigma(u) \cdot \mathbf{n}  } = 0 $ across an interface over which the coefficients exhibit jumps. 
However, higher order jumps do not need to vanish and so it is not conducive to penalize them. 
Therefore, we will set $\beta_j$ and $\gammaCIP_{j} $  to zero for $j \geq 2$ unless the Lam\'{e} coefficients are smooth. 
Note that this is consistent with \cref{eq:penalty-param-assumption}. 
\begin{assum}\label{ass:stab_param_smoothness_coeff}
If $\mu, \lambda \notin C^{\infty}(\Omega)$ then $\beta_j = \gammaCIP_{j}  = 0$ for $j \geq 2$. 
\end{assum}
Under \cref{ass:stab_param_smoothness_coeff} we have that \cref{eq:CIP_consistency} holds in any case since either the corresponding penalty parameter vanishes
or because $\jump{  (\nabla^{j-1} \sigma(u)) \cdot  \mathbf{n}  } = 0$ for a sufficiently regular solution $u \in [H^{p+1}(\Omega)]^{d}$,
see e.g. \citep[Lemma 1.23]{DPE12}.
A possibility to relax \cref{ass:stab_param_smoothness_coeff} could be to allow for spatially-varying penalty parameters which vanish in regions where the material parameters are non-smooth and may take positive values elsewhere.
%We should mention that well-defineness of these stabilisation terms for $j \geq 2$ is subject to the requirement that $m \geq j-1$ in \cref{ass:lame_reg_FEM} holds.
%If this requirement is violated, the corresponding stabilisation parameters have to be set to zero.
%Notice also that for a solution $u \in [H^{p+1}(\Omega)]^d$ of \cref{eq:PDE-cont} it holds that
We conclude this subsection by noting that
\bel{eq:s_gamma_u_vh_known}
s_{\gamma}(u,v_h) = \gammaGLS h^2 (\mathcal{L}u,\mathcal{L} v_h)_{ \mathcal{T}_h } = \gammaGLS h^2 (f,\mathcal{L} v_h)_{ \mathcal{T}_h  },
\ee
holds, which shows that the right hand side of \cref{eq:opt-compact} is known.
\subsection{Norms and inf-sup condition}\label{ssection:Norms} 
We define 
\bel{eq:norms-V_h-W_h}
\norm{u_h}_{V_h} := \left( s_{\gamma}(u_h,u_h) + s_{\alpha}(u_h,u_h)  \right) ^{1/2},   \quad 
\norm{z_h}_{W_h} := s^{\ast}(z_h,z_h)^{1/2},
\ee
for $u_h \in V_h$ and $z_h \in W_h$. 
Since $\alpha >0$ and thanks to the Friedrichs inequality on $W_h \subset V_0$, c.f. \cref{ieq:Friedrich}, 
these expressions indeed define norms on $V_h$, respectively $W_h$. 
On the product space $V_h \times W_h$ we define
\bel{eq:norm-V_hxW_h}
\norm{(u_h,z_h)}_{s}^2 := \norm{u_h}^2_{V_h} + \norm{u_h}^2_{ \omega } + \norm{z_h}_{W_h}^2, \quad (v_h,z_h) \in V_h \times W_h 
\ee
which then also defines a norm on $V_h \times W_h$. 
Note that 
\beq
A[(u_h,z_h),(u_h,-z_h)] = \norm{u_h}_{ [L^2(\omega)]^d}^2 + \norm{u_h}_{V_h}^2 + \norm{z_h}_{W_h}^2  = \norm{ (u_h,z_h)}_s \norm{(u_h,-z_h)}_{s}, 
\ee
which implies the inf-sup condition 
\bel{eq:inf-sup}
\sup_{ (v_h,w_h) \in V_h \times W_h} \frac{ A[(u_h,z_h),(v_h,w_h)]   }{ \norm{ (v_h,w_h)}_s   } \geq C \norm{ (u_h,z_h) }_s.
\ee
Here and in the following $C>0$ denotes a generic constant independent of $h$ but possibly depending on the stabilization parameters.  
%Finally, let us also define 
%\bel{eq:star-norm} 
%\norm{u}_{\ast} := \norm{u}_{ [H^2(\Omega)]^d }. 
%\ee
\subsection{Interpolation}\label{ssection:Interp}
Let $\Pi_h:V \rightarrow V_h$ denote the Scott-Zhang interpolation operator, which preserves homogeneous boundary conditions and fulfills (see \cite{SZ90}) the following stability  
\bel{eq:H1-stability-SZ}
\norm{\Pi_h u}_{ [H^1(\Omega)]^d } \leq C \norm{u}_{[H^1(\Omega)]^d}, \quad \forall u \in [H^1(\Omega)]^d 
\ee
and approximation property:
\bel{eq:approx-interp-ho}
\norm{u - \Pi_h u}_{[H^m(\Omega)]^d}  \leq C h^{s-m} \norm{u}_{[H^{s}(\Omega)]^d}, \; \forall u \in [H^s(\Omega)]^d, 
\ee
with $1 \leq s \leq p+1$ and $0 \leq m \leq s$. \par 
We will now derive some further approximation and stability results for this interpolation required for the analysis in \cref{section:ErrorAnalysis}.
To this end, the following standard (see e.g.\ \citep[Eq. 10.3.9]{BS08}) continuous trace inequality will be employed: There exists a constant $C>0$ such that 
\bel{ieq:cont-trace-ieq}
\norm{v}_{ \partial K } \leq C \left( h^{-1/2} \norm{v}_{ K } + h^{1/2} \norm{ \nabla v  }_{ K }   \right), \; \forall v \in [H^1(K)]^d. 
\ee
Before we proceed to work, let us give a remark discussing solutions with low regularity.
\begin{remark}
Below we will assume that the solution $u$ of \cref{eq:PDE-cont}-\cref{eq:u-measurements-omega} is in $ [H^{p+1}(\Omega)]^d$ for $p \geq 1$. 
If the Lam\'{e} coefficients are allowed to have jumps, then it is not realistic to assume that the solution enjoys such a high global regularity. 
However, according to \cref{ass:lame_reg_FEM} the jumps are limited to subdomains which are respected by the mesh. 
This would allow us to split $\Omega$ into subdomains $\Omega_i$ such that the restriction of $u$ is in $ [H^{p+1}(\Omega_i)]^d$ for each $i$ and treat 
each subdomain separately.  
To keep the analysis simple we will only consider such a scenario in our numerical experiments, see \cref{ssection:numexp_jump_shear}. 
\end{remark}
\begin{lem}[Weak consistency]\label{lem:weak-consistency}
%\todo[inline]{Adapt this lemma for new $J(\cdot,cdot)$ term.}
	%Let $i_h: V \rightarrow V_h$ be an interpolant that satisfies the standard approximation inequality 
%\bel{ieq:interp-grad}
%\norm{\nabla \left( v - i_h v \right) }_{ [L^2(\Omega)]^{d} } \leq C h \abs{v}_{ [H^2(\Omega)]^d }, \quad  \forall v \in [H^2(\Omega)]^d.
%\ee
Assume that $u \in [H^{p+1}(\Omega)]^d$ is a solution of \cref{eq:PDE-cont}-\cref{eq:u-measurements-omega}. 
Then there exists a constant $C>0$ such that there holds 
\bel{eq:weak-consistency}
\sum\limits_{j=1}^{p} \gammaCIP_{j}  J_j(\Pi_h u,\Pi_h u) \leq C h^{2p} \norm{u}_{ [H^{p+1}(\Omega)]^d }^2.  
\ee
\end{lem}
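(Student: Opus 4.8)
The plan is to bound the penalty term $\gammaCIP_j J_j(\Pi_h u,\Pi_h u)$ facet by facet and sum. Each summand on a facet $F$ has the form $h^{2j-1}\int_F \abs{\jump{(\nabla^{j-1}\sigma(\Pi_h u))\cdot\mathbf{n}}}^2\,\dS$. The key idea is to exploit consistency: by \cref{eq:CIP_consistency} the exact solution $u$ (when the coefficients are smooth enough that $\gammaCIP_j\neq 0$ for the index in question) satisfies $\jump{(\nabla^{j-1}\sigma(u))\cdot\mathbf{n}}=0$ across every interior facet. Hence I may insert $u$ for free and write
\[
\jump{(\nabla^{j-1}\sigma(\Pi_h u))\cdot\mathbf{n}} = \jump{(\nabla^{j-1}\sigma(\Pi_h u - u))\cdot\mathbf{n}},
\]
so the whole term is controlled purely by the interpolation error $\eta:=u-\Pi_h u$ rather than by $\Pi_h u$ itself.

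Next I would pass from the facet jump to elementwise contributions. Since $\jump{\cdot}$ is a sum of one-sided traces from the two neighbouring simplices $K_1,K_2$, I would use $\abs{a+b}^2\le 2(\abs{a}^2+\abs{b}^2)$ to reduce the facet integral to single-element boundary norms $\norm{(\nabla^{j-1}\sigma(\eta))\cdot\mathbf{n}}_{\partial K}$. On each such boundary I apply the continuous trace inequality \cref{ieq:cont-trace-ieq}, which converts the surface norm into volume norms of $\nabla^{j-1}\sigma(\eta)$ and $\nabla^{j}\sigma(\eta)$ weighted by $h^{-1/2}$ and $h^{1/2}$ respectively. Because $\sigma(\eta)$ involves $\mu,\lambda$ and first derivatives of $\eta$ (with $\mu,\lambda$ smooth wherever $\gammaCIP_j\neq 0$, by \cref{ass:stab_param_smoothness_coeff}), each application of $\nabla^{j-1}\sigma$ costs $j$ derivatives of $\eta$; thus $\nabla^{j-1}\sigma(\eta)$ is controlled by $\norm{\eta}_{[H^{j+1}(K)]^d}$-type seminorms with constants depending on the $C^\infty$ bounds of the coefficients. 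Collecting the $h$-powers, the facet factor $h^{2j-1}$ combines with the trace weights $h^{-1}$ (from squaring $h^{-1/2}$) to leave $h^{2j-2}$ multiplying $\norm{\eta}_{[H^{j}(K)]^d}^2$ and $h^{2j}$ multiplying $\norm{\eta}_{[H^{j+1}(K)]^d}^2$.

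Then I invoke the approximation property \cref{eq:approx-interp-ho} of the Scott–Zhang operator, taking $s=p+1$ and $m=j$ (respectively $m=j+1$), which gives $\norm{\eta}_{[H^{j}(\Omega)]^d}\le C h^{p+1-j}\norm{u}_{[H^{p+1}(\Omega)]^d}$ and similarly for $m=j+1$. Matching powers, the $H^j$-contribution yields $h^{2j-2}\cdot h^{2(p+1-j)}=h^{2p}$, and the $H^{j+1}$-contribution yields $h^{2j}\cdot h^{2(p-j)}=h^{2p}$; in both cases the $j$-dependence cancels and each term scales like $h^{2p}\norm{u}_{[H^{p+1}(\Omega)]^d}^2$. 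Summing over $j=1,\ldots,p$ (a fixed number of terms) and over facets (whose number is $\mathcal{O}(h^{-d})$, but absorbed since the per-element volume norms already sum to a global norm) produces the claimed bound. The main obstacle I anticipate is bookkeeping the derivative count inside $\nabla^{j-1}\sigma(\eta)$ correctly and ensuring the product rule on $\mu,\lambda$ times derivatives of $\eta$ does not raise the required regularity of $u$ beyond $H^{p+1}$; one must verify that $j\le p$ keeps $m=j+1\le p+1$ so that \cref{eq:approx-interp-ho} remains applicable, which is precisely why the sum runs only up to $p$.
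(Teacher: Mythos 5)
Your proposal is correct and follows essentially the same route as the paper's proof: insert the exact solution $u$ via the consistency property \cref{eq:CIP_consistency}, reduce the facet jumps to element volume norms with the trace inequality \cref{ieq:cont-trace-ieq}, bound $\nabla^{j-1}\sigma(\cdot)$ by $H^{j}$- and $H^{j+1}$-norms, and close with the Scott--Zhang approximation estimate \cref{eq:approx-interp-ho}, the powers of $h$ cancelling to give $h^{2p}$ uniformly in $j$. The bookkeeping you flag as the main risk works out exactly as you describe, matching the paper's computation.
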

\begin{proof}
As discussed in \cref{sssection:stab_reg} we have $ \gammaCIP_{j}  J_j( u, u) =  \gammaCIP_{j}  J_j( u, \Pi_h u) = 0$ for $j=1,\ldots,p$. 
Therefore, inserting $u$, using the trace inequality of \cref{ieq:cont-trace-ieq} and approximation properties of $\Pi_h$ given in \cref{eq:approx-interp-ho}  yields: 
\begin{align*}
& \sum\limits_{j=1}^{p} \gammaCIP_{j} J_j(\Pi_h u, \Pi_h u) =  \sum\limits_{j=1}^{p} \gammaCIP_{j} \sum\limits_{F \in \mathcal{F}_i} \int\limits_{F} h^{2j-1} \jump{  (\nabla^{j-1} \sigma( \Pi_h u -u  )) \cdot \mathbf{n}  }^2  \mathrm{d}S \\\
%& \leq C \sum\limits_{j=1}^{p} \sum\limits_{F \in \mathcal{F}_i} \int\limits_{F} h^{2j-1} \jump{  \nabla^{j-1} \sigma ( \Pi_h v -v  )   }^2  \mathrm{d}S  \\\
& \leq C \sum\limits_{j=1}^{p}  h^{2j-2} \norm{  \nabla^{j-1} \sigma \left( \Pi_h u - u \right)   }_{ [L^2(\Omega)]^d }^2 + h^{2j} \norm{  \nabla^{j-1} \sigma \left( \Pi_h u - u \right)   }_{ [H^1(\Omega)]^d }^2  \\
& \leq C \sum\limits_{j=1}^{p}  h^{2j-2} \norm{  \left( \Pi_h u - u \right)   }_{ [H^{j}(\Omega)]^d }^2 + h^{2j} \norm{  \left( \Pi_h u - u \right)   }_{ [H^{j+1}(\Omega)]^d }^2  \\
& \leq C \sum\limits_{j=1}^{p}  h^{2j-2} h^{2(p+1-j)} \norm{ v }_{ [H^{p+1}(\Omega)]^d }^2 + h^{2j} h^{2(p+1-j-1)} \norm{ u }_{ [H^{p+1}(\Omega)]^d }^2   \\
& \leq C h^{2p} \norm{u}_{ [H^{p+1}(\Omega)]^d }^2.  
\end{align*}
\end{proof} 
\begin{cor}\label{cor:interp-stability-V_h}
Assume that $u \in [H^{p+1}(\Omega)]^d$ is a solution of \cref{eq:PDE-cont}-\cref{eq:u-measurements-omega}. 
Then there exists a constant $C>0$ such that  
\bel{ieq:interp-stability-V_h}
\norm{ \Pi_h u - u }_{V_h} \leq C h^{p} \norm{u}_{ [H^{p+1}(\Omega)]^d }.
\ee
\end{cor}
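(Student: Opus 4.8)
The plan is to expand the squared norm $\norm{\Pi_h u - u}_{V_h}^2 = s_{\gamma}(\Pi_h u - u, \Pi_h u - u) + s_{\alpha}(\Pi_h u - u, \Pi_h u - u)$ according to the definition in \cref{eq:norms-V_h-W_h} and to bound each of its three constituent pieces — the continuous interior penalty jumps in $s_{\gamma}$, the Galerkin least squares contribution in $s_{\gamma}$, and the Tikhonov term $s_{\alpha}$ — separately by $C h^{2p} \norm{u}_{[H^{p+1}(\Omega)]^d}^2$. The claimed bound then follows by taking square roots. Note first that, although $\Pi_h u - u \notin V_h$, all the bilinear forms involved are well-defined on $V + V_h$ since $u \in [H^{p+1}(\Omega)]^d$ is regular enough for the facet and element integrals to make sense, so the expression $\norm{\Pi_h u - u}_{V_h}$ is meaningful.

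For the jump contribution $\sum_{j=1}^{p} \gammaCIP_{j} J_j(\Pi_h u - u, \Pi_h u - u)$ I would invoke consistency of the penalty as recorded in \cref{eq:CIP_consistency}: because $\gammaCIP_{j} J_j(u, \cdot) = 0$ and $\gammaCIP_{j} J_j(u,u) = 0$, bilinearity of $J_j$ collapses $\gammaCIP_{j} J_j(\Pi_h u - u, \Pi_h u - u)$ to $\gammaCIP_{j} J_j(\Pi_h u, \Pi_h u)$. Hence this piece is controlled directly by the weak consistency estimate \cref{eq:weak-consistency} of \cref{lem:weak-consistency}, yielding the bound $C h^{2p} \norm{u}_{[H^{p+1}(\Omega)]^d}^2$.

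For the Galerkin least squares term $\gammaGLS h^2 \norm{\mathcal{L}(\Pi_h u - u)}_{\mathcal{T}_h}^2$ I would estimate the element-wise differential operator by the broken second-order norm, i.e.\ $\norm{\mathcal{L} v}_{K} \leq C \norm{v}_{[H^2(K)]^d}$ on each $K \in \mathcal{T}_h$, the constant being governed by the coefficient bounds in \cref{eq:assumption-coeff}, and then apply the approximation property \cref{eq:approx-interp-ho} with $s = p+1$ and $m = 2$ (in the elementwise sense, as $\Pi_h u$ is only piecewise smooth). This gives $h^2 \cdot (h^{p-1})^2 \norm{u}_{[H^{p+1}(\Omega)]^d}^2 = h^{2p} \norm{u}_{[H^{p+1}(\Omega)]^d}^2$. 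The Tikhonov term $\alpha h^{2p} \norm{\Pi_h u - u}_{\Omega}^2$ is handled by \cref{eq:approx-interp-ho} with $m = 0$, producing $h^{2p} \cdot h^{2(p+1)} \norm{u}_{[H^{p+1}(\Omega)]^d}^2$, which is of even higher order in $h$ and is therefore absorbed using $h \leq 1$. Summing the three bounds and taking square roots completes the argument.

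I expect the only genuinely delicate point to be the Galerkin least squares contribution: one must verify that $\mathcal{L}$ applied element-wise to the interpolation error is bounded by the broken $H^2$ norm with a constant independent of $h$. This is exactly where the regularity of the coefficients enters, through the expansion of $\nabla \cdot \sigma$ requiring $\mu \in C^{0,1}$ so that $\nabla \mu \in L^{\infty}$ (consistent with \cref{ass:three_ball_ieq}), or its element-wise analogue under \cref{ass:lame_reg_FEM}. Everything else is a routine reassembly of the interpolation estimates already at our disposal.
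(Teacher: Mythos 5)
Your proposal is correct and follows essentially the same route as the paper's proof: the same decomposition of $\norm{\Pi_h u - u}_{V_h}^2$ into the jump, Galerkin least squares, and Tikhonov pieces, with the jump part reduced via consistency to \cref{lem:weak-consistency} and the other two handled by the approximation property \cref{eq:approx-interp-ho} together with \cref{ass:lame_reg_FEM} for the element-wise bound on $\mathcal{L}$. No gaps.
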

\begin{proof}
We have
\begin{align*}
\norm{ \Pi_h u - u }_{V_h}^2 &= s_{\gamma}(\Pi_h u -u, \Pi_h u - u ) + s_{\alpha}(\Pi_h u -u, \Pi_h u - u )    \\ 
	&= \sum\limits_{j=1}^{p}  \gammaCIP_{j}  J_j(\Pi_h u,\Pi_h u) + \gamma h^2  \norm{\mathcal{L}(\Pi_h u - u) }_{ \mathcal{T}_h  }^2 
	   + \alpha h^{2p} \norm{\Pi_h u -u}^2_{ \Omega  }. 
\end{align*}
In view of \cref{lem:weak-consistency}, it only remains to treat the last two terms.  We have 
\[
	\norm{\Pi_h u -u}^2_{ \Omega } \leq C h^{2(p+1)} \norm{u}_{ [H^{p+1}(\Omega)]^d  }^2
\] 
by \cref{eq:approx-interp-ho}. For the other term it follows from \cref{ass:lame_reg_FEM} on the coefficients 
and the approximation properties (\cref{eq:approx-interp-ho}) of $\Pi_h$ that 
\[
 h^2  \norm{\mathcal{L}(\Pi_h u - u) }_{ \mathcal{T}_h }^2 
  \leq  C h^2  \norm{\Pi_h u - u }_{ [H^2(\mathcal{T}_h)]^d }^2   
  \leq C h^2 h^{2(p-1)} \norm{u}_{ [H^{p+1}(\Omega)]^d  }^2.
\]
Combining these estimates yields the claim.
\end{proof}

\section{Error analysis}\label{section:ErrorAnalysis}
This section is concerned with the derivation of covergence rates for the stabilized finite element method introduced in \cref{section:Discretisation}. 
In \cref{ssection:unperturbed-data} we first consider the case of unperturbed data. 
The perturbed case can then be treated in \cref{ssection:perturbed-data} by minor modification of the proofs for the unperturbed situation. 

\subsection{Unperturbed data}\label{ssection:unperturbed-data}
To obtain error estimates, we will apply the conditional stability estimate from \cref{cor:cond-stability} to the error $u-u_h$. 
Controlling the arising terms on the right hand side of \cref{eq:cond-stability} requires estimates on the residual 
\[
\langle r,w \rangle := a_h(u_h-u,w) = a_h(u_h,w) - (f,w)_{ \Omega }, \quad w \in V_0. 
\]
The next lemma provides one of the essential bounds for this purpose.

\begin{lem}\label{lem:a_h-control-V_h}
\begin{enumerate}[label=(\alph*)]
\item 
There exists a constant $C>0$ such that 
\bel{ieq:a_h-control-V_h}
a_h(u,v) \leq C \norm{u}_{V_h} \left( h^{-1} \norm{v}_{ \Omega } + \norm{ \nabla v}_{ \Omega } \right), 
\ee
for all $ u \in V_h + [H^{p+1}(\Omega)]^{d}$ and $ v \in V_0$.
\item There exists a constant $C>0$ such that 
\bel{ieq:s_beta-control-V_h}
s_{\beta}(u, w_h ) \leq C  \norm{u}_{V_h} \norm{ \nabla w_h}_{ \Omega } , \; \forall u \in V_h + [H^{p+1}(\Omega)]^{d}, \; \forall  w_h \in W_h.  
\ee
\end{enumerate}
\end{lem}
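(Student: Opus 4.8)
The plan is to prove the two estimates separately, in each case reducing the left-hand side to the facet functionals $J_j$ and the Galerkin least squares volume term that together constitute $\norm{\cdot}_{V_h}$.

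For part (a), I would first integrate by parts element by element. Since $v \in V_0$ vanishes on $\partial\Omega$ and is continuous across interior facets, summing the elementwise identities
\[
\int_K \left[ \sigma(u):\sdd(v) - \rho\, u v \right] \dX = \int_K (\mathcal{L}u) \cdot v \; \dX + \int_{\partial K} (\sigma(u)\cdot\mathbf{n}_K) \cdot v \; \dS
\]
over all $K \in \mathcal{T}_h$ cancels all boundary contributions on $\partial\Omega$ and collapses the remaining facet integrals to the interior facets, producing
\[
a_h(u,v) = (\mathcal{L}u,v)_{\mathcal{T}_h} + \sum_{F \in \mathcal{F}_i} \int_F \jump{\sigma(u)\cdot\mathbf{n}} \cdot v \; \dS.
\]
The continuity of $v$ is what lets only the jump of $\sigma(u)$ appear. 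The volume term is handled at once by $\norm{\mathcal{L}u}_{\mathcal{T}_h} \leq C h^{-1}\norm{u}_{V_h}$, which is immediate from the Galerkin least squares contribution in the definition of $s_\gamma$ in \cref{eq:s_gamma-def}, and gives the $h^{-1}\norm{v}_\Omega$ part of the bound.

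The facet sum is the genuinely delicate term. I would apply Cauchy--Schwarz on each facet, insert the scaling $h^{1/2}\cdot h^{-1/2}$, and then use a discrete Cauchy--Schwarz over the facets to split off the factor
\[
\Big( \sum_{F \in \mathcal{F}_i} h \norm{\jump{\sigma(u)\cdot\mathbf{n}}}_F^2 \Big)^{1/2} = J_1(u,u)^{1/2} \leq C \norm{u}_{V_h},
\]
where the last bound uses $\gammaCIP_1 > 0$. It remains to control $\big(\sum_F h^{-1}\norm{v}_F^2\big)^{1/2}$: applying the continuous trace inequality \cref{ieq:cont-trace-ieq} on each simplex adjacent to $F$ and summing (using quasi-uniformity and the bounded number of facets per simplex) bounds this by $C(h^{-1}\norm{v}_\Omega + \norm{\nabla v}_\Omega)$. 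Combining the two terms gives the claim. The main obstacle here is bookkeeping of the $h$-powers: the weight $h$ in $J_1$ must exactly compensate the $h^{-1}$ generated by the trace inequality so that only the stated scaling survives.

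For part (b), I would rely on the Cauchy--Schwarz inequality for the bilinear jump forms, $J_j(u,w_h) \leq J_j(u,u)^{1/2} J_j(w_h,w_h)^{1/2}$, together with the parameter condition \cref{eq:penalty-param-assumption}. Writing $s_\beta(u,w_h) = \sum_{j=1}^p \beta_j J_j(u,w_h)$ and applying a discrete Cauchy--Schwarz in $j$ gives a product of $\big(\sum_j \abs{\beta_j} J_j(u,u)\big)^{1/2}$ and $\big(\sum_j \abs{\beta_j} J_j(w_h,w_h)\big)^{1/2}$. The first factor is $\leq C\norm{u}_{V_h}$ because $\gammaCIP_j \geq \abs{\beta_j}$ for $j \geq 2$ while for $j=1$ the ratio $\abs{\beta_1}/\gammaCIP_1$ (finite since $\gammaCIP_1>0$) enters the constant. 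The second factor is bounded by $C\norm{\nabla w_h}_\Omega$ via a discrete trace inequality followed by an inverse inequality on the polynomials $\nabla^{j-1}\sigma(w_h)$: the weight $h^{2j-1}$ absorbs the $h^{-1}$ from the trace estimate and the $h^{-(j-1)}$ factors arising from passing from $\nabla^j w_h$ back to $\nabla w_h$, leaving exactly $\norm{\nabla w_h}_\Omega$. For $j \geq 2$ this inverse-inequality step needs differentiability of $\sigma(w_h)$ and hence smooth coefficients, but by \cref{ass:stab_param_smoothness_coeff} those higher terms are active only when $\mu,\lambda$ are smooth, so the argument closes without any loss of generality.
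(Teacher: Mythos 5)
Your proposal is correct and takes essentially the same route as the paper's proof: elementwise integration by parts with the volume residual absorbed by the Galerkin least squares term and the facet jumps by $J_1$ plus the continuous trace inequality for part (a), and Cauchy--Schwarz together with trace/inverse inequalities and the penalty-parameter condition \cref{eq:penalty-param-assumption} for part (b). The only cosmetic difference is the order in which Cauchy--Schwarz and the inverse estimates are applied in (b), which does not change the argument.
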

\begin{proof}
\begin{enumerate}[label=(\alph*)]
\item 
Element-wise integration by parts yields 
\begin{align*}
a_h(u,v) &= \sum\limits_{K \in \mathcal{T}_h} \int\limits_{K} \left[ \sigma(u):\nabla v - \rho u v  \right] \; \mathrm{d}x  \\
	   &= \sum\limits_{K \in \mathcal{T}_h} \int\limits_{K} \left[  - \nabla \cdot \sigma(u) -   \rho u \right] v \; \mathrm{d}x 
	    + \sum\limits_{K \in \mathcal{T}_h} \int\limits_{\partial K}   \sigma(u)  \cdot  \mathbf{n}   v  \;   \mathrm{d}S \\
  &= \sum\limits_{K \in \mathcal{T}_h} \int\limits_{K} \mathcal{L} u v \; \mathrm{d}x 
	+ \sum\limits_{F \in \mathcal{F}_i} \int\limits_{F} \jump{  \sigma(u)  \cdot  \mathbf{n}  } v \; \mathrm{d}S \\ 
	&:= \mathrm{I} + \mathrm{II}.
\end{align*}
We control the first term by means of the Galerkin least squares stabilization:
	\[ \mathrm{I} \leq \left( h^2 (\mathcal{L} u, \mathcal{L} u)_{ \mathcal{T}_h } \right)^{1/2}  h^{-1} \norm{v}_{ \Omega  }  
	      \leq C \norm{u}_{V_h} h^{-1} \norm{v}_{ \Omega }. 
	\]
 The penalty on the normal jumps of $\sigma(u)$ over the facets allows to estimate the second term: 
\begin{align*}
	\mathrm{II} &\leq C \left(  \sum\limits_{F \in \mathcal{F}_i}  \int\limits_{F}  h \jump{ \sigma(u) \cdot \mathbf{n} }^2  \; \mathrm{d}S   \right)^{1/2} 
	\left( \sum\limits_{F \in \mathcal{F}_i} h^{-1}  \norm{v}_{ F  }^2  \right)^{1/2} \\  
	& \leq C \left(  \sum\limits_{F \in \mathcal{F}_i}  \int\limits_{F}  h \jump{ \sigma(u) \cdot \mathbf{n} }^2  \; \mathrm{d}S   \right)^{1/2}  
	 \left( \sum\limits_{K \in \mathcal{T}_h} h^{-2}  \norm{v}_{ K }^2  + \norm{\nabla v}_{ K }^2    \right)^{1/2} \\
	& \leq C J_1(u,u)^{1/2} \left( h^{-1} \norm{v}_{ \Omega } + \norm{ \nabla v}_{  \Omega } \right),
\end{align*}
where the trace inequality in \cref{ieq:cont-trace-ieq} has been employed. 
Note that $J_1(u,u)^{1/2} \leq C  \norm{u}_{V_h}$ thanks to $\gammaCIP_1 > 0$. 
Combining both contributions yields the claim.
\\ 
%The third term can be bounded analogously to the second one using that 
%\[  \sum\limits_{F \in \mathcal{F}_i} \int\limits_{F} h  \jump{ \div(u) \mathbf{n} }^2  \; \mathrm{d}S 
%    \leq J(u,u),     \]
%which follows from \cref{ieq:CIP-div-grad}.
\item Making use of the inverse inequalities 
$\norm{w_h}_F \leq C h^{-1/2} \norm{w_h}_K$ and $\norm{\nabla w_h}_K \leq C h^{-1} \norm{w_h}_K$ yields
\begin{align*}
	\sum\limits_{j=1}^{p} \abs{\beta_j} J_j(w_h,w_h)   &= \sum\limits_{j=1}^{p} \abs{\beta_j} \sum\limits_{F \in \mathcal{F}_i} \int\limits_{F} h^{2j-1} \jump{  (\nabla^{j-1} \sigma(w_h)) \cdot \mathbf{n}  }^2   \; \mathrm{d}S  \\
	&\leq C  \sum\limits_{j=1}^{p} \sum\limits_{K \in \mathcal{T}_h}  h^{2j-1} h^{-1}  \norm{ \nabla^{j-1} \sigma(w_h) }_{K}^2 \\ 
	&\leq C \sum\limits_{j=1}^{p} \sum\limits_{K \in \mathcal{T}_h}  h^{2j-1} h^{-1} h^{-2(j-1)}  \norm{ \sigma(w_h) }_{K}^2 \\ 
	& \leq C  \sum\limits_{j=1}^{p} \sum\limits_{K \in \mathcal{T}_h}  \norm{ \nabla w_h }_{K}^2.  
\end{align*}
Combining this with the Cauchy-Schwarz inequality 
\[
s_{\beta}(u, w_h ) \leq  \left( \sum\limits_{j=1}^{p} \abs{\beta_j} J_j(u,u) \right)^{1/2} \left( \sum\limits_{j=1}^{p} \abs{\beta_j} J_j(w_h,w_h) \right)^{1/2}   \leq C  \norm{u}_{V_h} \norm{ \nabla w_h}
\]
yields the claim. Here we used that 
\[
 \sum\limits_{j=1}^{p} \abs{\beta_j} J_j(u,u) \leq C \sum\limits_{j=1}^{p}  \gammaCIP_{j} J_j(u,u),  
\]
which follows from the assumption given in \cref{eq:penalty-param-assumption} on the penalty parameters.
\end{enumerate}
\end{proof}
We will later apply \cref{lem:a_h-control-V_h} (a) to $u_h -u$, which will result in a term $\norm{u_h-u}_{V_h}$ on the right hand side of \cref{ieq:a_h-control-V_h}. 
Since
\[  \norm{u_h-u}_{V_h}  \leq  \norm{u_h- \Pi_h u}_{V_h} + \norm{ \Pi_h u - u }_{V_h}   \]
and we can already control $\norm{ \Pi_h u - u }_{V_h}$ by \cref{ieq:interp-stability-V_h}, it remains to 
consider $\norm{u_h- \Pi_h u}_{V_h}$. 
To this end, we prove the next lemma.
\begin{lem}\label{lem:ieq:discr-error-s-norm}
Assume that $u \in [H^{p+1}(\Omega)]^d$ is a solution of \cref{eq:PDE-cont}-\cref{eq:u-measurements-omega} and let $(u_h,z_h) \in V_h \times W_h$ be the solution to \cref{eq:opt-compact}. 
Then there exists $C>0$ such that for all $h \in (0,1)$ it holds that 
\bel{ieq:discr-error-s-norm}
\norm{(u_h - \Pi_h u, z_h)}_{s} \leq C h^{p} \norm{u}_{ [H^{p+1}(\Omega)]^d  }.
\ee
\end{lem}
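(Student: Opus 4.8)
The plan is to use the inf-sup stability \eqref{eq:inf-sup} of $A$ applied to the discrete pair $(u_h - \Pi_h u, z_h) \in V_h \times W_h$, so that the claim reduces to bounding $A[(u_h - \Pi_h u, z_h),(v_h,w_h)]$ by $C h^p \norm{u}_{[H^{p+1}(\Omega)]^d}\,\norm{(v_h,w_h)}_s$ uniformly in $(v_h,w_h) \in V_h\times W_h$. First I would record a Galerkin orthogonality relation. Inserting the exact pair $(u,0)$ into $A$ from \eqref{eq:def-A} and using $u=u_\omega$ on $\omega$, the weak identity $a_h(u,w_h)=(f,w_h)_\Omega$, and the consistency $s_{\beta}(u,w_h)=0$ from \eqref{eq:CIP_consistency}, one sees that $(u,0)$ satisfies the discrete system \eqref{eq:opt-compact} \emph{up to} the single defect $s_{\alpha}(u,v_h)$ generated by the Tikhonov term \eqref{eq:s_alpha-def}. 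Subtracting this from \eqref{eq:opt-compact}, splitting $u_h-u=(u_h-\Pi_h u)+(\Pi_h u - u)$ and writing $\eta:=\Pi_h u - u$, I obtain
\[
A[(u_h-\Pi_h u,z_h),(v_h,w_h)] = -s_{\alpha}(\Pi_h u,v_h) - (\eta,v_h)_{\omega} - s_{\gamma}(\eta,v_h) - a_h(\eta,w_h) - s_{\beta}(\eta,w_h),
\]
where I have used $s_{\alpha}(u,v_h)+s_{\alpha}(\eta,v_h)=s_{\alpha}(\Pi_h u,v_h)$.

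I would then bound the five terms separately, in each case peeling off a component of $\norm{(v_h,w_h)}_s$ by Cauchy-Schwarz in the appropriate stabilization (semi-)inner product and recalling that $\norm{v_h}_{V_h}$, $\norm{v_h}_{\omega}$ and $\norm{w_h}_{W_h}=\norm{\nabla w_h}_{\Omega}$ are all dominated by $\norm{(v_h,w_h)}_s$. The Tikhonov defect gives $s_{\alpha}(\Pi_h u,v_h)\le s_{\alpha}(\Pi_h u,\Pi_h u)^{1/2}\norm{v_h}_{V_h}$ with $s_{\alpha}(\Pi_h u,\Pi_h u)^{1/2}=\sqrt{\alpha}\,h^{p}\norm{\Pi_h u}_{\Omega}\le C h^{p}\norm{u}_{[H^{p+1}(\Omega)]^d}$ by the $H^1$-stability \eqref{eq:H1-stability-SZ} of $\Pi_h$. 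The data term obeys $(\eta,v_h)_{\omega}\le\norm{\eta}_{\omega}\norm{v_h}_{\omega}\le C h^{p+1}\norm{u}_{[H^{p+1}(\Omega)]^d}\norm{v_h}_{\omega}$ by \eqref{eq:approx-interp-ho}. For the stabilizer term, $s_{\gamma}(\eta,v_h)\le\norm{\eta}_{V_h}\norm{v_h}_{V_h}$ with $\norm{\eta}_{V_h}\le C h^{p}\norm{u}_{[H^{p+1}(\Omega)]^d}$ supplied directly by \cref{cor:interp-stability-V_h}, and the CIP term is handled by \cref{lem:a_h-control-V_h}(b), giving $s_{\beta}(\eta,w_h)\le C\norm{\eta}_{V_h}\norm{\nabla w_h}_{\Omega}\le C h^{p}\norm{u}_{[H^{p+1}(\Omega)]^d}\norm{w_h}_{W_h}$. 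Each of these four contributions thus carries the required factor $h^{p}$.

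The delicate term, which I expect to be the main obstacle, is $a_h(\eta,w_h)$. Applying \cref{lem:a_h-control-V_h}(a) naively would yield $C\norm{\eta}_{V_h}\bigl(h^{-1}\norm{w_h}_{\Omega}+\norm{\nabla w_h}_{\Omega}\bigr)$, and bounding $h^{-1}\norm{w_h}_{\Omega}$ through the Friedrichs inequality on $W_h\subset V_0$ would cost one power of $h$ and leave only the suboptimal rate $h^{p-1}$. Instead I would estimate $a_h(\eta,w_h)=\int_{\Omega}\bigl[\sigma(\eta):\sdd(w_h)-\rho\,\eta\,w_h\bigr]\,\dX$ directly: since the coefficients are bounded, $\int_{\Omega}\sigma(\eta):\sdd(w_h)\,\dX\le C\norm{\nabla\eta}_{\Omega}\norm{\nabla w_h}_{\Omega}$ with $\norm{\nabla\eta}_{\Omega}\le C h^{p}\norm{u}_{[H^{p+1}(\Omega)]^d}$ from \eqref{eq:approx-interp-ho} (taking $m=1$, $s=p+1$), while $\int_{\Omega}\rho\,\eta\,w_h\,\dX\le C\norm{\eta}_{\Omega}\norm{w_h}_{\Omega}\le C h^{p+1}\norm{u}_{[H^{p+1}(\Omega)]^d}\norm{\nabla w_h}_{\Omega}$ after Friedrichs; hence $a_h(\eta,w_h)\le C h^{p}\norm{u}_{[H^{p+1}(\Omega)]^d}\norm{w_h}_{W_h}$ at the correct order. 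Summing the five bounds, dividing by $\norm{(v_h,w_h)}_s$, taking the supremum and invoking \eqref{eq:inf-sup} then gives \eqref{ieq:discr-error-s-norm}.
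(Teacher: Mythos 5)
Your proposal is correct and follows essentially the same route as the paper: reduce via the inf-sup condition \eqref{eq:inf-sup}, derive the identity $A[(u_h-\Pi_h u,z_h),(v_h,w_h)]=(u-\Pi_h u,v_h)_{\omega}+s_{\gamma}(u-\Pi_h u,v_h)-s_{\alpha}(\Pi_h u,v_h)+a_h(u-\Pi_h u,w_h)+s_{\beta}(u-\Pi_h u,w_h)$ (yours is the same modulo the sign convention $\eta=\Pi_h u-u$ and the ``Galerkin orthogonality'' phrasing), and bound the five terms exactly as in the paper, including the key point of estimating $a_h(u-\Pi_h u,w_h)$ by continuity of the bilinear form plus \eqref{eq:approx-interp-ho} and Friedrichs rather than via \cref{lem:a_h-control-V_h}(a), which would lose a power of $h$.
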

\begin{proof} 
It suffices to prove that for $(v_h,w_h) \in V_h \times W_h$ the inequality
\bel{ieq:discr-error-s-norm-aux}
A[ (u_h - \Pi_h u,z_h),(v_h,w_h) ] \leq C h^{p} \norm{u}_{ [H^{p+1}(\Omega)]^d  } \norm{ (v_h,w_h) }_{s} 
\ee
holds, because then the inf-sup condition in \cref{eq:inf-sup} yields 
\[
C \norm{(u_h - \Pi_h u, z_h)}_{s} \leq  \!\!\!\! 
 \sup_{ (v_h,w_h) \in V_h \times W_h} \!\!\!\! \frac{ A[(u_h -\Pi_h u,z_h),(v_h,w_h)]   }{ \norm{ (v_h,w_h)}_s   } \leq C h^{p} \norm{u}_{ [H^{p+1}(\Omega)]^d  }.
\]
To prove \cref{ieq:discr-error-s-norm-aux}, we use \cref{eq:opt-compact} to arrive at
\begin{align*}
& A[ (u_h - \Pi_h u,z_h),(v_h,w_h) ] =  A[ (u_h,z_h),(v_h,w_h) ] \\
& \quad - (\Pi_h u,v_h)_{ \omega }  - s_{\gamma}(\Pi_h u, v_h) - s_{\alpha}(\Pi_h u, v_h)  - a_h(\Pi_h u,w_h) -  s_{\beta}(\Pi_h u , w_h )  \\ 
&=  (u_{\omega},v_h)_{ \omega } + s_{\gamma}(u,v_h) +  \underbrace{(f,w_h)_{ \Omega }}_{= a_h(u,w_h)  } \\ 
	  & \quad  - (\Pi_h u,v_h)_{ \omega } - s_{\gamma}(\Pi_h u, v_h) - s_{\alpha}(\Pi_h u, v_h) - a_h(\Pi_h u,w_h) -  s_{\beta}(\Pi_h u , w_h ) \\
	&=  (u - \Pi_h u,v_h)_{ \omega } + a_h(u - \Pi_h u,w_h) + s_{\gamma}(u - \Pi_h u, v_h) - s_{\alpha}(\Pi_h u, v_h) +  s_{\beta}(u -\Pi_h u , w_h ). 
\end{align*}
Here we also employed the consisteny of the jump penalties, c.f.\ \cref{eq:CIP_consistency}.
\begin{itemize}
\item The first term is bounded by using Cauchy-Schwarz and the approximation properties of $\Pi_h$:  
\[  (u - \Pi_h u,v_h)_{ \omega } \leq \norm{u - \Pi_h u}_{ \Omega } \norm{v_h }_{ \omega } \leq C h^{p+1} \norm{u}_{ [H^{p+1}(\Omega)]^d} \norm{v_h }_{\omega }. \]
\item For the second term we have 
\begin{align*}
     a_h(u-\Pi_h u,w_h) &= \int\limits_{\Omega} \left[ \sigma(u-\Pi_h u)  :\nabla w_h  - \rho (u -\Pi_h u) w_h  \right] \mathrm{d}x \\ 
   & \leq C \left( \norm{ \nabla ( u_h - \Pi_h u) }_{ \Omega } \norm{ \nabla w_h  }_{  \Omega } 
		  + \norm{ u_h - \Pi_h u}_{ \Omega }  \norm{   w_h }_{ \Omega } \right) \\
   & \leq C \left(  h^{p} \norm{u}_{ [H^{p+1}(\Omega)]^d}    \norm{ \nabla w_h  }_{ \Omega } 
		  +  h^{p+1} \norm{u}_{ [H^{p+1}(\Omega)]^d}    \norm{   w_h }_{ \Omega } \right) \\
   & \leq C h^{p} \norm{u}_{ [H^{p+1}(\Omega)]^d} \norm{ w_h}_{W_h},
\end{align*}
where we used the approximation properties of $\Pi_h$ and Friedrichs inequality 
\bel{ieq:Friedrich}
	\norm{  w_h }_{ [L^2(\Omega)]^d } \leq  C  \norm{ \nabla  w_h }_{ [L^2(\Omega)]^d }, \quad \forall w_h \in W_h \subset V_0. 
\ee 
\item For the third term we obtain from \cref{ieq:interp-stability-V_h} that 
	\[ s_{\gamma}(u - \Pi_h u, v_h) \leq \norm{u - \Pi_h u}_{V_h} \norm{ v_h }_{V_h} \leq C h^{p} \norm{u}_{ [H^{p+1}(\Omega)]^d} \norm{v_h}_{V_h}. \] 
\item The second to last term is bounded by
\begin{align*}
	s_{\alpha}(\Pi_h u, v_h) & = \sqrt{\alpha} h^{p}(\Pi_h u, \sqrt{\alpha} h^{p}v_h)_{ \Omega } 
				   \leq \sqrt{\alpha} h^p \norm{u}_{ [H^1(\Omega)]^d } \norm{ v_h }_{V_h}. 
\end{align*} 
\item For the last term we can use \cref{ieq:s_beta-control-V_h} to obtain
\begin{equation*}
s_{\beta}(u -\Pi_h u , w_h ) \leq C  \norm{u - \Pi_h u}_{V_h} \norm{\nabla w_h}_{ \Omega } \leq C h^{p} \norm{u}_{ [H^{p+1}(\Omega)]^d} \norm{\nabla w_h}_{ \Omega  } .
\end{equation*}
\end{itemize}
Combining these estimates yields \cref{ieq:discr-error-s-norm-aux}.
\end{proof}
We are now in a position to derive an $L^2$-error estimate for unperturbed data. 
For $p=1$ it is comparable with \citep[Theorem 1]{BNO19} for the Helmholtz equation except that the dependence on the wavenumber is implicit in our estimate.
\begin{theorem}\label{thm:L2-error-unperturbed}
Let the subdomains $\omega$ and $B$ of $\Omega$ be defined as in \cref{cor:cond-stability}. 
Assume that $u \in [H^{p+1}(\Omega)]^d$ is a solution to \cref{eq:PDE-cont}-\cref{eq:u-measurements-omega} and let $(u_h,z_h) \in V_h \times W_h$ be the solution to \cref{eq:opt-compact}. 
Then there exists $C>0$ and $\tau \in (0,1)$ such that  
\bel{ieq:L2-error-estimate-unperturbed}
\norm{u - u_h}_{ B } \leq C h^{\tau p} \norm{u}_{ [H^{p+1}(\Omega)]^d  }.
\ee
\end{theorem}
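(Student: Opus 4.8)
The plan is to apply the conditional stability estimate of \cref{cor:cond-stability} to the error $e := u - u_h$. Since $\mathcal{L}u = f$ and $a_h(u,\cdot) = (f,\cdot)_{\Omega}$ on $V_0$, the error satisfies $a_h(e,w) = \langle r,w\rangle$ for all $w \in V_0$, where $r \in V_0^{\prime}$ is precisely the residual introduced before \cref{lem:a_h-control-V_h}. Taking the source in \cref{eq:cond-stability} to be $\mathcal{L}e = r$, it then remains to control the three quantities $\norm{r}_{V_0^{\prime}}$, $\norm{e}_{[L^2(\Omega)]^d}$ and $\norm{e}_{[L^2(\omega)]^d}$, and finally to balance the two resulting factors.

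First I would dispose of the two $L^2$-terms via the splitting $e = (u - \Pi_h u) + (\Pi_h u - u_h)$. The interpolation error is handled by \cref{eq:approx-interp-ho}, giving $\norm{u - \Pi_h u}_{\Omega} \leq C h^{p+1}\norm{u}_{[H^{p+1}(\Omega)]^d}$ and likewise on $\omega$. For the discrete part $\Pi_h u - u_h$ I would invoke \cref{lem:ieq:discr-error-s-norm}: its $\omega$-component is controlled directly because $\norm{\cdot}_s$ contains the summand $\norm{\cdot}_{\omega}$, which yields $\norm{\Pi_h u - u_h}_{\omega} \leq C h^p \norm{u}_{[H^{p+1}(\Omega)]^d}$; its full $L^2(\Omega)$-norm is recovered from the Tikhonov term $s_{\alpha}$ hidden in $\norm{\cdot}_{V_h}$, whose weight $\alpha h^{2p}$ cancels the $h^{2p}$ from \cref{lem:ieq:discr-error-s-norm} and leaves $\norm{\Pi_h u - u_h}_{\Omega} \leq C\norm{u}_{[H^{p+1}(\Omega)]^d}$ at order one. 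Hence $\norm{e}_{\omega} \leq C h^p\norm{u}_{[H^{p+1}(\Omega)]^d}$ and $\norm{e}_{\Omega} \leq C\norm{u}_{[H^{p+1}(\Omega)]^d}$, the latter being exactly the order-one a priori bound that feeds the $(1-\tau)$-factor.

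The crux is the residual bound $\norm{r}_{V_0^{\prime}} \leq C h^p\norm{u}_{[H^{p+1}(\Omega)]^d}$. A direct use of \cref{lem:a_h-control-V_h}(a) on $a_h(e,w)$ produces the factor $h^{-1}\norm{w}_{\Omega} + \norm{\nabla w}_{\Omega}$, which when measured against $\norm{w}_{[H^1(\Omega)]^d}$ would cost a power of $h$ and leave only $h^{(p-1)\tau}$. The remedy is Galerkin orthogonality: for $w \in V_0$ write $a_h(e,w) = a_h(e,w-\Pi_h w) + a_h(e,\Pi_h w)$, which is legitimate because the Scott-Zhang operator preserves homogeneous boundary conditions, so $\Pi_h w \in W_h$. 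On the first term \cref{lem:a_h-control-V_h}(a) now combines with the approximation and $H^1$-stability of $\Pi_h$ (so that $h^{-1}\norm{w-\Pi_h w}_{\Omega} + \norm{\nabla(w-\Pi_h w)}_{\Omega} \leq C\norm{w}_{[H^1(\Omega)]^d}$) to give $a_h(e,w-\Pi_h w) \leq C\norm{e}_{V_h}\norm{w}_{[H^1(\Omega)]^d}$ with no loss. On the second term I would use the second optimality condition in \cref{eq:opt-cond} to obtain $a_h(e,\Pi_h w) = -s^{\ast}(z_h,\Pi_h w) + s_{\beta}(u_h,\Pi_h w)$; invoking consistency of $s_{\beta}$ (\cref{eq:CIP_consistency}) to replace $u_h$ by $u_h - u$, these are bounded by $s^{\ast}$, \cref{lem:a_h-control-V_h}(b) and the $H^1$-stability of $\Pi_h$ as $C(\norm{z_h}_{W_h} + \norm{u_h-u}_{V_h})\norm{w}_{[H^1(\Omega)]^d}$. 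Since \cref{lem:ieq:discr-error-s-norm} controls $\norm{z_h}_{W_h}$ and, together with \cref{cor:interp-stability-V_h}, bounds $\norm{e}_{V_h} \leq \norm{u_h-\Pi_h u}_{V_h} + \norm{\Pi_h u - u}_{V_h}$, both by $C h^p\norm{u}_{[H^{p+1}(\Omega)]^d}$, the residual is of order $h^p$ as asserted.

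Finally I would insert the three bounds into \cref{eq:cond-stability}: the first factor is $\leq C\norm{u}_{[H^{p+1}(\Omega)]^d}^{1-\tau}$ since $\norm{r}_{V_0^{\prime}} + \norm{e}_{\Omega}$ stays at order one, the second factor is $\leq C\bigl(h^p\norm{u}_{[H^{p+1}(\Omega)]^d}\bigr)^{\tau}$, and their product gives $\norm{u - u_h}_B \leq C h^{\tau p}\norm{u}_{[H^{p+1}(\Omega)]^d}$. I expect the main obstacle to be the residual estimate, specifically recovering the optimal exponent $h^p$ instead of $h^{p-1}$ by trading the $h^{-1}$-scaled bound of \cref{lem:a_h-control-V_h}(a) against the interpolation estimate on $w - \Pi_h w$ and the optimality condition on the discrete test function $\Pi_h w$.
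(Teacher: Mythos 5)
Your proposal is correct and follows essentially the same route as the paper: the same residual decomposition $a_h(u_h-u,w-\Pi_h w)+s^{\ast}(z_h,\Pi_h w)-s_{\beta}(u_h-u,\Pi_h w)$ obtained from the second optimality condition with $w_h=\Pi_h w$, the same bounds via \cref{lem:a_h-control-V_h}, \cref{lem:ieq:discr-error-s-norm} and \cref{cor:interp-stability-V_h}, the same use of the Tikhonov term to get the order-one bound on $\norm{u-u_h}_{\Omega}$, and the same final balancing in \cref{eq:cond-stability}. The only discrepancy is an immaterial sign convention between your $e=u-u_h$ and the paper's residual $\langle r,w\rangle=a_h(u_h-u,w)$.
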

\begin{proof}
Consider the residual 
\[ \langle r,w \rangle := a_h(u_h-u,w) = a_h(u_h,w) - (f,w)_{ \Omega }, \quad w \in V_0. \]
Taking $v_h = 0$ in \cref{eq:opt-compact} yields: 
\[ a_h(u_h,w_h)    = (f,w_h)_{\Omega} + s^{\ast}(z_h,w_h) - s_{\beta}(u_h,w_h)  \quad  \forall w_h \in W_h.     \]
Using this identity with $w_h = \Pi_h w$ implies
\begin{align*}
\langle r,w \rangle &= a_h(u_h,w) - (f,w)_{ \Omega } - a_h(u_h,\Pi_h w) + a_h(u_h,\Pi_h w)  \\ 
	 & =  a_h(u_h,w - \Pi_h w) - (f, w - \Pi_h w )_{ \Omega } + s^{\ast}(z_h,\Pi_h w ) - s_{\beta}(u_h, \Pi_h w )   \\ 
	 & =  a_h(u_h - u,w - \Pi_h w) + s^{\ast}(z_h,\Pi_h w ) - s_{\beta}(u_h - u, \Pi_h w ). 
\end{align*}
\begin{itemize}
\item From \cref{lem:a_h-control-V_h} we obtain that 
\begin{align*}
a_h(u_h - u,w - \Pi_h w) & \leq C \norm{u_h -u}_{V_h} \left( h^{-1} \norm{ w - \Pi_h w }_{ \Omega } + \norm{ \nabla \left( w - \Pi_h w \right)   }_{ \Omega } \right)  \\ 
	& \leq C \norm{u_h -u}_{V_h} \norm{w}_{ [H^1(\Omega)]^d},
\end{align*}
by the properties in \cref{eq:H1-stability-SZ} and \cref{eq:approx-interp-ho} of $\Pi_h$.
Further, from \cref{lem:ieq:discr-error-s-norm} and \cref{ieq:interp-stability-V_h} we obtain 
\bel{eq:triangle_ieq_Vh}
\norm{u_h -u}_{V_h} \leq \norm{u_h - \Pi_h u}_{V_h} + \norm{\Pi_h u -u}_{V_h} \leq 
C h^{p} \norm{u}_{ [H^{p+1}(\Omega)]^d  }.
\ee 
\item To bound the second term, we again use \cref{lem:ieq:discr-error-s-norm} and the $H^1$-stability of $\Pi_h$: 
\[  s^{\ast}(z_h,\Pi_h w ) \leq \norm{z_h}_{W_h} \norm{ \Pi_h w }_{W_h} \leq C  h^{p} \norm{u}_{ [H^{p+1}(\Omega)]^d  } \norm{w}_{ [H^{1}(\Omega)]^d }. \]
\item 
The last term is treated by invoking \cref{ieq:s_beta-control-V_h} and then proceeding as in \cref{eq:triangle_ieq_Vh}:
\begin{align*}
s_{\beta}(u_h - u, \Pi_h w ) \leq C  \norm{u-u_h}_{V_h} \norm{w}_{ [H^1(\Omega)]^d} \leq C  h^{p} \norm{u}_{ [H^{p+1}(\Omega)]^d  } \norm{w}_{ [H^{1}(\Omega)]^d }. 
\end{align*}
\end{itemize}
Hence, the following residual norm estimate holds
\[
\norm{r}_{ V_{0}^{\prime} } \leq C  h^{p} \norm{u}_{ [H^{p+1}(\Omega)]^d  }. 
\]
Using the conditional stability estimate from \cref{cor:cond-stability} for $u-u_h$ (note that in \cref{eq:cond-stability} we have $f =  r$ in $V_0^{\prime}$) yields the following error estimate
\[
\norm{u-u_h}_{ B } \leq C \left( h^{p} \norm{u}_{ [H^{p+1}(\Omega)]^d  }  + \norm{u-u_h}_{ \omega }  \right)^{\tau} 
	  \left(  h^{p} \norm{u}_{ [H^{p+1}(\Omega)]^d  }  + \norm{u-u_h}_{ \Omega }     \right)^{1-\tau}.
\]
\begin{itemize}
\item From \cref{eq:approx-interp-ho} and \cref{lem:ieq:discr-error-s-norm} we obtain 
\[  \norm{u-u_h}_{ \omega  }  \leq  \norm{u- \Pi_h u}_{ \omega }  + \norm{\Pi_h u - u_h}_{ \omega }  \leq C h^{p} \norm{u}_{ [H^{p+1}(\Omega)]^d  }. \]
\item We also have 
\[ \norm{u-u_h}_{ \Omega } \leq  \norm{u- \Pi_h u}_{ \Omega } + \norm{u_h - \Pi_h u}_{ \Omega } \leq C h^{p+1} \norm{u}_{ [H^{p+1}(\Omega)]^d  } + \norm{u_h - \Pi_h u}_{ \Omega }.   \]
It remains to estimate $\norm{u_h - \Pi_h u}_{ [L^2(\Omega)]^d }$.
By definition of $s_{\alpha}(\cdot,\cdot)$, see \cref{eq:s_alpha-def},  we have 
\begin{align*}
\norm{u_h - \Pi_h u}_{ \Omega } &= \alpha^{-1/2} h^{-p} s_{\alpha}( u_h - \Pi_h u, u_h - \Pi_h u )^{1/2} \\ 
                & \leq C h^{-p} \norm{u_h - \Pi_h u}_{V_h}  \leq C  \norm{u}_{ [H^{p+1}(\Omega)]^d  },  
\end{align*}
where the last inequality follows by \cref{lem:ieq:discr-error-s-norm}.  
	%To this end, we use the discrete Poincar\'{e} inequality for piecewise affine functions \cref{ieq:Poincare} and  \cref{ieq:discr-error-s-norm} which yields
%\begin{align*}
%\norm{u_h- \Pi_h u}_{ [L^2(\Omega)]^d }  & \leq h^{-1} C \left( \norm{ u_h- \Pi_h u}_{ [L^2(\omega)]^d } + (J_h( u_h- \Pi_h u, u_h- \Pi_h u))^{1/2}   \right) \\ 
%                                         & \leq h^{-1} C \norm{(u_h - \Pi_h u, z_h)}_{s} \\
%					 & \leq C \norm{u}_{\ast}.
%\end{align*}
\end{itemize}
It follows that 
\[ \norm{u-u_h}_{ B } \leq C \left(h^p \norm{u}_{ [H^{p+1}(\Omega)]^d  } \right)^{\tau} \left( \norm{u}_{ [H^{p+1}(\Omega)]^d  } \right)^{1-\tau}  
 = C h^{p \tau} \norm{u}_{ [H^{p+1}(\Omega)]^d  }.     \]
\end{proof}

\subsection{Perturbed data}\label{ssection:perturbed-data}

We now proceed to the case of perturbed data 
\[  \tilde{u}_{\omega} := u_{\omega} + \delta u, \quad  
    \tilde{f} := f + \delta f 
\]
with unperturbed data $u_{\omega},f$ in \cref{eq:PDE-cont}, respectively \cref{eq:u-measurements-omega} and perturbations 
$\delta u \in [L^2(\omega)]^d$ and $\delta f \in [L^2(\Omega)]^d$ measured by 
\bel{eq:perturb-norm}
\delta(\tilde{u}_{\omega},\tilde{f}) := \norm{\delta u}_{ \omega  }  + h \norm{ \delta f }_{ \Omega  } + \norm{ \delta f }_{H^{-1}(\Omega)  }. 
\ee
In view of \cref{eq:s_gamma_u_vh_known}, we have
\[ 
\gammaGLS h^2 (\tilde{f},\mathcal{L} v_h)_{ \mathcal{T}_h } = s_{\gamma}(u,v_h) + \gammaGLS h^2 (\delta f,\mathcal{L} v_h)_{ \mathcal{T}_h },
\]
so that the saddle points of the corresponding perturbed Lagrangian now satisfy: 
\bel{eq:opt-compact-perturb}
A[(u_h,z_h),(v_h,w_h)] = (\tilde{u}_{\omega},v_h)_{ \omega } + s_{\gamma}(u,v_h) + \gammaGLS h^2 (\delta f,\mathcal{L} v_h)_{ \mathcal{T}_h } + (\tilde{f},w_h)_{ \Omega }  
\ee
for all $(v_h,w_h) \in V_h \times W_h$.  
Let us first prove the analogue of \cref{lem:ieq:discr-error-s-norm} for perturbed data. 
\begin{lem}\label{lem:ieq:discr-error-s-norm-perturb}
Assume that $u \in [H^{p+1}(\Omega)]^d$ is a solution of the unperturbed problem in \cref{eq:PDE-cont}-\cref{eq:u-measurements-omega} and let $(u_h,z_h) \in V_h \times W_h$ be the solution of the
perturbed problem in \cref{eq:opt-compact-perturb}. 
Then there exists $C>0$ such that for all $h \in (0,1)$ it holds that 
\bel{ieq:discr-error-s-norm-perturb}
	\norm{(u_h - \Pi_h u, z_h)}_{s} \leq C \left( h^{p} \norm{u}_{ [H^{p+1}(\Omega)]^d } + \delta(\tilde{u}_{\omega},\tilde{f}) \right). 
\ee
\end{lem}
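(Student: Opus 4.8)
The plan is to follow the proof of \cref{lem:ieq:discr-error-s-norm} almost verbatim, tracking only the new contributions produced by the data perturbations. As there, it suffices by the inf-sup condition \cref{eq:inf-sup} to establish
\[
A[(u_h - \Pi_h u, z_h),(v_h,w_h)] \leq C\left( h^{p}\norm{u}_{[H^{p+1}(\Omega)]^d} + \delta(\tilde{u}_{\omega},\tilde{f}) \right) \norm{(v_h,w_h)}_s
\]
for all $(v_h,w_h)\in V_h\times W_h$. To obtain this I would expand $A[(u_h - \Pi_h u, z_h),(v_h,w_h)]$ exactly as in \cref{lem:ieq:discr-error-s-norm}, but now substitute the perturbed optimality relation \cref{eq:opt-compact-perturb} in place of \cref{eq:opt-compact}. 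Since $u_\omega = u$ in $\omega$ and $(f,w_h)_\Omega = a_h(u,w_h)$, the algebra reproduces the five unperturbed terms $(u-\Pi_h u,v_h)_\omega$, $a_h(u-\Pi_h u,w_h)$, $s_{\gamma}(u-\Pi_h u,v_h)$, $-s_{\alpha}(\Pi_h u,v_h)$, $s_{\beta}(u-\Pi_h u,w_h)$, plus exactly three extra terms stemming from $\delta u$ and $\delta f$, namely $(\delta u,v_h)_\omega$, $\gammaGLS h^2(\delta f,\mathcal{L}v_h)_{\mathcal{T}_h}$ and $(\delta f,w_h)_\Omega$.

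The five unperturbed terms are bounded precisely as in \cref{lem:ieq:discr-error-s-norm} by $C h^p \norm{u}_{[H^{p+1}(\Omega)]^d}\norm{(v_h,w_h)}_s$, so no work is needed there. The novelty lies entirely in the three perturbation terms, and the point is that each of them pairs naturally with one of the three summands in the perturbation measure \cref{eq:perturb-norm}. For the first, Cauchy--Schwarz gives $(\delta u,v_h)_\omega \leq \norm{\delta u}_{\omega}\, \norm{v_h}_{\omega}$, and $\norm{v_h}_{\omega} \leq \norm{(v_h,w_h)}_s$ by definition of the $s$-norm \cref{eq:norm-V_hxW_h}. For the Galerkin least squares term I would write $\gammaGLS h^2(\delta f,\mathcal{L}v_h)_{\mathcal{T}_h} \leq C\, (h\norm{\delta f}_{\Omega})\,(h\norm{\mathcal{L}v_h}_{\mathcal{T}_h})$ and then absorb the last factor using $h\norm{\mathcal{L}v_h}_{\mathcal{T}_h} \leq C \norm{v_h}_{V_h} \leq C\norm{(v_h,w_h)}_s$, which follows from the least squares part of $s_{\gamma}$ in \cref{eq:s_gamma-def} together with $\gammaGLS > 0$. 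For the last term, duality gives $(\delta f,w_h)_\Omega \leq \norm{\delta f}_{H^{-1}(\Omega)}\, \norm{w_h}_{[H^1(\Omega)]^d}$, and since $w_h \in W_h \subset V_0$ the Friedrichs inequality \cref{ieq:Friedrich} yields $\norm{w_h}_{[H^1(\Omega)]^d} \leq C\norm{\nabla w_h}_{\Omega} = C\norm{w_h}_{W_h} \leq C\norm{(v_h,w_h)}_s$.

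Adding the three bounds shows that the perturbation terms are controlled by $C\,\delta(\tilde{u}_\omega,\tilde{f})\,\norm{(v_h,w_h)}_s$, which combined with the unperturbed estimate establishes the displayed inequality; an application of \cref{eq:inf-sup} then gives \cref{ieq:discr-error-s-norm-perturb}. I do not anticipate a genuine obstacle: the only conceptual step is recognizing why the perturbation is measured in the particular combination $\norm{\delta u}_{\omega} + h\norm{\delta f}_{\Omega} + \norm{\delta f}_{H^{-1}(\Omega)}$, and this is dictated exactly by the pairings above, since the $h$-weighted $L^2$ norm is forced by the least squares term, which only allows control of $h\norm{\mathcal{L}v_h}_{\mathcal{T}_h}$, whereas the $H^{-1}$ norm is forced by the bulk term tested against $w_h$, for which only an $H^1$-type bound is available. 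The one point requiring mild care is keeping the consistency of the jump penalties \cref{eq:CIP_consistency} intact, exactly as in the unperturbed proof, when collecting the $s_{\gamma}$ and $s_{\beta}$ contributions.
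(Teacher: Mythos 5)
Your proposal is correct and follows essentially the same route as the paper: expand $A$ using the perturbed optimality condition, reuse the unperturbed bounds from \cref{lem:ieq:discr-error-s-norm}, and pair each of the three perturbation terms with the corresponding summand of $\delta(\tilde{u}_{\omega},\tilde{f})$ exactly as the paper does (GLS term against $h\norm{\delta f}_{\Omega}$, data term against $\norm{\delta u}_{\omega}$, and the bulk term against $\norm{\delta f}_{H^{-1}(\Omega)}$ via Friedrichs). No gaps.
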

\begin{proof}
Proceeding as in the proof of \cref{lem:ieq:discr-error-s-norm} we use \cref{eq:opt-compact-perturb} to arrive at  
\begin{align*}
& A[ (u_h - \Pi_h u,z_h),(v_h,w_h) ]  \\
& =  (u_{\omega},v_h)_{ \omega } + (\delta u,v_h)_{ \omega } 
   + s_{\gamma}(u,v_h) + \gammaGLS h^2 (\delta f,\mathcal{L} v_h)_{ \mathcal{T}_h },
   +  a_h(u,w_h)  + (\delta f, w_h)_{ \Omega }  \\
	  & \quad  - (\Pi_h u,v_h)_{ \omega } - s_{\gamma}(\Pi_h u, v_h) - s_{\alpha}(\Pi_h u, v_h) - a_h(\Pi_h u,w_h) -  s_{\beta}(\Pi_h u , w_h )  \\
& =  (u - \Pi_h u,v_h)_{ \omega } + a_h(u - \Pi_h u,w_h) + s_{\gamma}(u - \Pi_h u, v_h) - s_{\alpha}(\Pi_h u, v_h) +  s_{\beta}(u -\Pi_h u , w_h ) \\ 
    & \quad + \gammaGLS h^2 (\delta f,\mathcal{L} v_h)_{ \mathcal{T}_h } + (\delta u,v_h)_{ \omega } +  (\delta f, w_h)_{ \Omega }. 
%A[ (u_h - \Pi_h u,z_h),(v_h,w_h) ] &= (u - \Pi_h u,v_h)_{ [L^2(\omega)]^d } + a_h(u - \Pi_h u,w_h) - s(\Pi_h u, v_h) \\ 
%   & + (\delta u, v_h)_{ [L^2(\omega)]^d } + \langle \delta f, w_h \rangle. 
\end{align*}
The terms in the second to last line are bounded as in the proof of \cref{lem:ieq:discr-error-s-norm}.
The terms including the perturbations are estimated by 
\begin{align*} 
   & \gammaGLS h^2 (\delta f,\mathcal{L} v_h)_{ \mathcal{T}_h  } + (\delta u,v_h)_{ \omega } +  (\delta f, w_h)_{ \Omega }  \\  
   & \leq  \gammaGLS h \norm{ \delta f }_{ \Omega }  h \norm{\mathcal{L} v_h}_{ \mathcal{T}_h } + \norm{\delta u}_{ \omega  } \norm{v_h}_{ \omega  } 
	  + \norm{ \delta f }_{ H^{-1}(\Omega) } \norm{w_h}_{ H^1(\Omega) }   \\   
   & \leq  C  \left( h \norm{ \delta f }_{ \Omega } + \norm{ \delta f }_{ H^{-1}(\Omega) } +\norm{\delta u}_{ \omega  }  \right) \left(  \norm{ v_h}_{V_h } + \norm{w_h}_{ W_h }  \right)   \\
	& \leq C \delta(\tilde{u}_{\omega},\tilde{f}) \norm{(v_h,w_h) }_{s}, 
\end{align*}
where Friedrichs inequality, see \cref{ieq:Friedrich}, has been employed.
\end{proof}

With this lemma being established we can show the analogue of \cref{thm:L2-error-unperturbed} for perturbed data. 
Our result is comparable with \citep[Theorem 3]{BNO19} for the Helmholtz equation obtained for $p=1$.
It is also comparable with \citep[Theorem 5.7]{BDE21} for the case of higher polynomial orders $p$ except that the latter publication even controls the $H^1$-norm 
in $B$.
This is out of scope here since it would require a conditional stability estimate which additionally controls the gradient of $u$, see \citep[Lemma 3.1]{BDE21}.
\begin{theorem}\label{thm:L2-error-perturbed-data}
Let the subdomains $\omega$ and $B$ of $\Omega$ be defined as in \cref{cor:cond-stability}. 
Assume that $u \in [H^{p+1}(\Omega)]^d$ is a solution to the unperturbed problem in \cref{eq:PDE-cont}-\cref{eq:u-measurements-omega} and let $(u_h,z_h) \in V_h \times W_h$ be the solution 
of the perturbed problem in \cref{eq:opt-compact-perturb}. 
Then there exists $C>0$ and $\tau \in (0,1)$ such that  
\bel{ieq:L2-error-estimate-perturbed}
\norm{u - u_h}_{B} \leq C h^{\tau p} \left(  \norm{u}_{ [H^{p+1}(\Omega)]^d } + h^{-p} \delta(\tilde{u}_{\omega},\tilde{f}) \right). 
\ee
\end{theorem}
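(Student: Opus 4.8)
The plan is to follow the template of the unperturbed proof in \cref{thm:L2-error-unperturbed}, tracking the extra perturbation contributions through the residual estimate and then applying the conditional stability estimate of \cref{cor:cond-stability}. First I would reconsider the residual $\langle r,w\rangle := a_h(u_h - u,w)$ for $w \in V_0$. Taking $v_h = 0$ in the perturbed optimality system \cref{eq:opt-compact-perturb} now yields $a_h(u_h,w_h) = (\tilde{f},w_h)_{\Omega} + s^{\ast}(z_h,w_h) - s_{\beta}(u_h,w_h)$, so inserting $w_h = \Pi_h w$ and using $a_h(u,w) = (f,w)_{\Omega}$ produces the same three terms as before plus a perturbation term $(\delta f, w - \Pi_h w)_{\Omega}$. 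I would bound the first three terms exactly as in \cref{thm:L2-error-unperturbed}, but with $\norm{u_h - u}_{V_h}$ and $\norm{z_h}_{W_h}$ now controlled by \cref{lem:ieq:discr-error-s-norm-perturb} instead of \cref{lem:ieq:discr-error-s-norm}, which replaces $h^p\norm{u}_{[H^{p+1}(\Omega)]^d}$ throughout by $h^p\norm{u}_{[H^{p+1}(\Omega)]^d} + \delta(\tilde{u}_{\omega},\tilde{f})$.

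The new ingredient is the term $(\delta f, w - \Pi_h w)_{\Omega}$, which I would estimate using Cauchy-Schwarz and the approximation property \cref{eq:approx-interp-ho}: one gets $(\delta f, w - \Pi_h w)_{\Omega} \leq \norm{\delta f}_{\Omega}\,\norm{w - \Pi_h w}_{\Omega} \leq C h \norm{\delta f}_{\Omega}\,\norm{w}_{[H^1(\Omega)]^d}$. Since $h\norm{\delta f}_{\Omega}$ is one of the three summands defining $\delta(\tilde{u}_{\omega},\tilde{f})$ in \cref{eq:perturb-norm}, this contribution is absorbed into the data noise level. (This is precisely why the perturbation measure is weighted the way it is — the factor $h$ on $\norm{\delta f}_{\Omega}$, together with the $H^{-1}$ term used inside \cref{lem:ieq:discr-error-s-norm-perturb}, is chosen to match the interpolation order.) Combining all contributions I expect the residual bound to read $\norm{r}_{V_0^{\prime}} \leq C\bigl(h^p\norm{u}_{[H^{p+1}(\Omega)]^d} + \delta(\tilde{u}_{\omega},\tilde{f})\bigr)$.

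Next I would feed this into the conditional stability estimate \cref{eq:cond-stability} applied to $u - u_h$ with $f = r \in V_0^{\prime}$, giving
\[
\norm{u - u_h}_B \leq C\bigl(\norm{r}_{V_0^{\prime}} + \norm{u - u_h}_{\omega}\bigr)^{\tau}\bigl(\norm{r}_{V_0^{\prime}} + \norm{u - u_h}_{\Omega}\bigr)^{1-\tau}.
\]
I then need the two norms $\norm{u - u_h}_{\omega}$ and $\norm{u - u_h}_{\Omega}$ in the perturbed setting. For the $\omega$-norm I would split through $\Pi_h u$ and use \cref{eq:approx-interp-ho} together with \cref{lem:ieq:discr-error-s-norm-perturb}, obtaining $\norm{u - u_h}_{\omega} \leq C(h^p\norm{u}_{[H^{p+1}(\Omega)]^d} + \delta(\tilde{u}_{\omega},\tilde{f}))$. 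For the $\Omega$-norm I would again split off $\norm{u_h - \Pi_h u}_{\Omega}$ and exploit the Tikhonov term $s_{\alpha}$: the identity $\norm{u_h - \Pi_h u}_{\Omega} = \alpha^{-1/2}h^{-p}s_{\alpha}(u_h - \Pi_h u, u_h - \Pi_h u)^{1/2} \leq C h^{-p}\norm{u_h - \Pi_h u}_{V_h}$ combined with \cref{lem:ieq:discr-error-s-norm-perturb} gives $\norm{u - u_h}_{\Omega} \leq C(\norm{u}_{[H^{p+1}(\Omega)]^d} + h^{-p}\delta(\tilde{u}_{\omega},\tilde{f}))$.

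The main subtlety — and the step I would handle most carefully — is the bookkeeping of the powers of $h$ in the final Hölder product, since the two factors carry different scalings of the noise term. Writing $G := h^p\norm{u}_{[H^{p+1}(\Omega)]^d} + \delta(\tilde{u}_{\omega},\tilde{f})$, the first factor is $O(G)$ and the second is $O(G/h^p) = O(\norm{u}_{[H^{p+1}(\Omega)]^d} + h^{-p}\delta(\tilde{u}_{\omega},\tilde{f}))$. Raising to the powers $\tau$ and $1-\tau$ and collecting, the product is bounded by $C\,h^{\tau p}\bigl(\norm{u}_{[H^{p+1}(\Omega)]^d} + h^{-p}\delta(\tilde{u}_{\omega},\tilde{f})\bigr)$; here I would use that $G \leq h^p(\norm{u}_{[H^{p+1}(\Omega)]^d} + h^{-p}\delta(\tilde{u}_{\omega},\tilde{f}))$ so that the factor $h^{p\tau}$ factors cleanly out of the $\tau$-power while the remaining $h^{p(1-\tau)}$ from that same bracket combines with the second factor. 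This reproduces exactly \cref{ieq:L2-error-estimate-perturbed}, recovering \cref{thm:L2-error-unperturbed} in the noise-free limit $\delta(\tilde{u}_{\omega},\tilde{f}) = 0$.
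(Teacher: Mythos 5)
Your proposal follows the paper's proof essentially verbatim: same residual decomposition via $\Pi_h w$, same use of \cref{lem:ieq:discr-error-s-norm-perturb} in place of \cref{lem:ieq:discr-error-s-norm}, same bounds on $\norm{u-u_h}_{\omega}$ and $\norm{u-u_h}_{\Omega}$ via the Tikhonov term, and the same final H\"older bookkeeping.

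One algebra slip: taking $v_h=0$ in \cref{eq:opt-compact-perturb} and inserting $w_h=\Pi_h w$ gives
$\langle r,w\rangle = a_h(u_h-u,w-\Pi_h w) + s^{\ast}(z_h,\Pi_h w) - s_{\beta}(u_h-u,\Pi_h w) + (\delta f,\Pi_h w)_{\Omega}$,
i.e.\ the perturbation term is $(\delta f,\Pi_h w)_{\Omega}$, not $(\delta f, w-\Pi_h w)_{\Omega}$ as you wrote: the $(f,\cdot)$ contributions combine into $-(f,w-\Pi_h w)_{\Omega}$, which is absorbed by $a_h(u,w-\Pi_h w)$, while the $\delta f$ part stays tested against $\Pi_h w$. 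Consequently this term is bounded as $\norm{\delta f}_{H^{-1}(\Omega)}\norm{\Pi_h w}_{H^1(\Omega)} \leq C\,\delta(\tilde{u}_{\omega},\tilde{f})\norm{w}_{[H^1(\Omega)]^d}$ using the $H^{-1}$ component of \cref{eq:perturb-norm} and the $H^1$-stability \cref{eq:H1-stability-SZ}, rather than via the $h\norm{\delta f}_{\Omega}$ component (that component is what controls the Galerkin-least-squares perturbation term $\gammaGLS h^2(\delta f,\mathcal{L}v_h)_{\mathcal{T}_h}$ inside \cref{lem:ieq:discr-error-s-norm-perturb}). The residual estimate $\norm{r}_{V_0^{\prime}} \leq C(h^p\norm{u}_{[H^{p+1}(\Omega)]^d} + \delta(\tilde{u}_{\omega},\tilde{f}))$ and the final conclusion are unaffected.
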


\begin{proof}
Following the proof of \cref{thm:L2-error-unperturbed}, the residual can now be written as 
\[
\langle r,w \rangle =  
   a_h(u_h-u,w - \Pi_h w) + s^{\ast}(z_h,\Pi_h w ) - s_{\beta}(u_h - u, \Pi_h w ) + (\delta f,\Pi_h w )_{ \Omega }. 
\]
We estimate the first three terms similar as in the proof of \cref{thm:L2-error-unperturbed} but now appealing to \cref{lem:ieq:discr-error-s-norm-perturb}  
instead of \cref{lem:ieq:discr-error-s-norm}. 
\begin{itemize}
\item As in the proof of \cref{thm:L2-error-unperturbed} we obtain  
\begin{align*} 
a_h(u_h-u,w - \Pi_h w) & \leq C  \norm{u_h -u}_{V_h} \norm{w}_{ [H^1(\Omega)]^d} \\ 
          & \leq C \left(   \norm{\Pi_h u - u}_{V_h} + \norm{ u_h - \Pi_h u  }_{V_h}   \right) \norm{w}_{ [H^1(\Omega)]^d}  \\ 
          & \leq C \left(  h^{p} \norm{u}_{ [H^{p+1}(\Omega)]^d } + \delta(\tilde{u}_{\omega},\tilde{f})  \right) \norm{w}_{ [H^1(\Omega)]^d}, 
\end{align*}
where \cref{lem:ieq:discr-error-s-norm-perturb} and \cref{ieq:interp-stability-V_h} have been emloyed. 
\item Furthermore,  
	\[  s^{\ast}(z_h,\Pi_h w ) \leq \norm{z_h}_{W_h} \norm{ \Pi_h w }_{W_h} \leq C 
		\left(  h^{p} \norm{u}_{ [H^{p+1}(\Omega)]^d  } + \delta(\tilde{u}_{\omega},\tilde{f}) \right) \norm{w}_{ [H^{1}(\Omega)]^d )}, \]
by invoking \cref{lem:ieq:discr-error-s-norm-perturb} again. 
\item 
The term involving $s_{\beta}$ is treated by using the inequality in \cref{ieq:s_beta-control-V_h} and then proceeding as above to estimate $\norm{u -u_h}_{V_h}$, i.e.   
\begin{align*}
s_{\beta}(u_h - u, \Pi_h w ) &\leq C  \norm{u-u_h}_{V_h} \norm{w}_{ [H^1(\Omega)]^d} \\
	 &\leq C \left(  h^{p} \norm{u}_{ [H^{p+1}(\Omega)]^d } + \delta(\tilde{u}_{\omega},\tilde{f})  \right) \norm{w}_{ [H^1(\Omega)]^d}.
\end{align*}
\item The perturbation term is easily bounded by using Cauchy-Schwarz and the stability of the interpolation: 
\[  
(\delta f,\Pi_h w )_{ \Omega } \leq  \norm{ \delta f }_{ H^{-1}( \Omega) } \norm{  \Pi_h w }_{ H^1(\Omega) } \leq C \delta(\tilde{u}_{\omega},\tilde{f}) \norm{ w }_{ H^1(\Omega) }. 
\] 
\end{itemize}
It follows that
\[ 
\norm{r}_{V_0^{\prime}} \leq C \left(  h^{p} \norm{u}_{ [H^{p+1}(\Omega)]^d  } + \delta(\tilde{u}_{\omega},\tilde{f})   \right).
\]
The conditional stability estimate from \cref{cor:cond-stability} therefore leads to the error estimate
\begin{align*}
	\norm{u-u_h}_{ B } \leq &C \left(  h^{p} \norm{u}_{ [H^{p+1}(\Omega)]^d  }  + \delta(\tilde{u}_{\omega},\tilde{f}) + \norm{u-u_h}_{ \omega }  \right)^{\tau} \\ 
	  & \times \left( h^{p} \norm{u}_{ [H^{p+1}(\Omega)]^d  } + \delta(\tilde{u}_{\omega},\tilde{f}) + \norm{u-u_h}_{ \Omega } \right)^{1-\tau}.
\end{align*}
\begin{itemize}
\item From \cref{lem:ieq:discr-error-s-norm-perturb} and equation \cref{eq:approx-interp-ho} we obtain 
\begin{align*}
\norm{u-u_h}_{ \omega }  &\leq  \norm{u- \Pi_h u}_{ \omega }  + \norm{\Pi_h u - u_h}_{ \omega }  \\ 
                                  & \leq C \left(  h^{p} \norm{u}_{ [H^{p+1}(\Omega)]^d  }  + \delta(\tilde{u}_{\omega},\tilde{f}) \right). 
\end{align*}
\item We also have
\begin{align*}
\norm{u-u_h}_{ \Omega } & \leq  \norm{u- \Pi_h u}_{ \Omega } + \norm{u_h - \Pi_h u}_{ \Omega } \\ 
	    & \leq C h^{p+1} \norm{u}_{ [H^{p+1}(\Omega)]^d  } + \norm{u_h - \Pi_h u}_{ \Omega }  
\end{align*}
and by definition of $s_{\alpha}(\cdot,\cdot)$, see \cref{eq:s_alpha-def}, and \cref{lem:ieq:discr-error-s-norm-perturb} it holds that 
\begin{align*}
\norm{u_h - \Pi_h u}_{ \Omega }  & \leq C h^{-p} \norm{u_h - \Pi_h u}_{V_h}  \\
					  & \leq C \left( \norm{u}_{ [H^{p+1}(\Omega)]^d  }  + h^{-p} \delta(\tilde{u}_{\omega},\tilde{f})  \right). 
\end{align*} 
\end{itemize}
It follows that 
\begin{align*}
	\norm{u-u_h}_{ B }  &\leq C \left( h^p \left[  \norm{u}_{ [H^{p+1}(\Omega)]^d  }  + h^{-p} \delta(\tilde{u}_{\omega},\tilde{f}) \right] \right)^{\tau} 
	 \left(  \norm{u}_{ [H^{p+1}(\Omega)]^d  }  + h^{-p} \delta(\tilde{u}_{\omega},\tilde{f})   \right)^{1-\tau}  \\
         &= C h^{\tau p} \left( \norm{u}_{ [H^{p+1}(\Omega)]^d  }  + h^{-p} \delta(\tilde{u}_{\omega},\tilde{f}) \right).
\end{align*}
\end{proof}

\begin{remark}
Notice that if $h < h_{\mathrm{min}}$ for $h_{\mathrm{min}} := (\delta(\tilde{u}_{\omega},\tilde{f}) / \norm{u}_{ [H^{p+1}(\Omega)]^d })^{1/p} $ the data perturbation term in \cref{ieq:L2-error-estimate-perturbed} dominates so that further refinement of the mesh 
will lead to poorer accuracy. 
Hence, refinement should be stopped at $h =h_{\mathrm{min}}$. 
Alternatively, the coefficient in front of the Tikhonov term in \cref{eq:s_alpha-def} can be made lower bounded to ensure that stagnation of the error occurs for $h < h_{\mathrm{min}}$ 
as explained in detail in \citep[Remark 5.1]{BDE21}.
However, this also requires an estimate of $\norm{u}_{ [H^{p+1}(\Omega)]^d }$ and the noise level. 
\end{remark}

\section{Numerical experiments}\label{section:numexp}

In this section we present a selection of numerical experiments to confirm the analytically derived error estimate of \cref{thm:L2-error-perturbed-data} and shed light on
several additional aspects which exceed the scope of our present analysis. 
All experiments have been implemented using the open-source computing platform \texttt{FEniCSx} \citep{UFL14,BasixJoss22}. 
To check our results we also implemented some of the numerical experiments in \texttt{Netgen/ NGSolve} \citep{JS97,JS14} and observed 
a qualitatively good agreement. 
A docker image containing all software and instructions to reproduce the numerical experiments shown in this paper can be obtained from 
the \texttt{zenodo} repository: \cite{BP22_zenodo}. \par 
%Code and instructions to reproduce the numerical results are provided at (insert URL of github repository / Docker image at zenodo). \par 
In all the numerical experiments we will set $\rho = -k^2$ for a positive constant $k>0$ representing the wavenumber.
For the experiments in \cref{ssection:numexp_stab_tuning}-\cref{ssection:numexp_pollution} we consider the following geometrical setup.
Let $\Omega = [0,1]^2$ be the unit square and the measurement $\omega$ and target domain $B$ be given by
\bel{eq:subdomains-convex}
\omega = \Omega \setminus [0.1,0.9] \times [0.25,1] \text{ and }  
B = \Omega \setminus [0.1,0.9] \times [0.95,1].
\ee
These subdomains are displayed in \cref{fig:subdomains-convex}. 
We consider a sequence of meshes which are obtained by successive refinements of an initial mesh which is shown in \cref{fig:omega-Ind-convex-level0}.
All these meshes fulfill our assumption (see \cref{ssection:FEspaces}) of being fitted to the subdomains.
\par 

\begin{figure}[htbp]
\centering
\subfloat[ $\omega$ on coarsest mesh.]%
{\label{fig:omega-Ind-convex-level0}
\includegraphics[width=.4\textwidth]{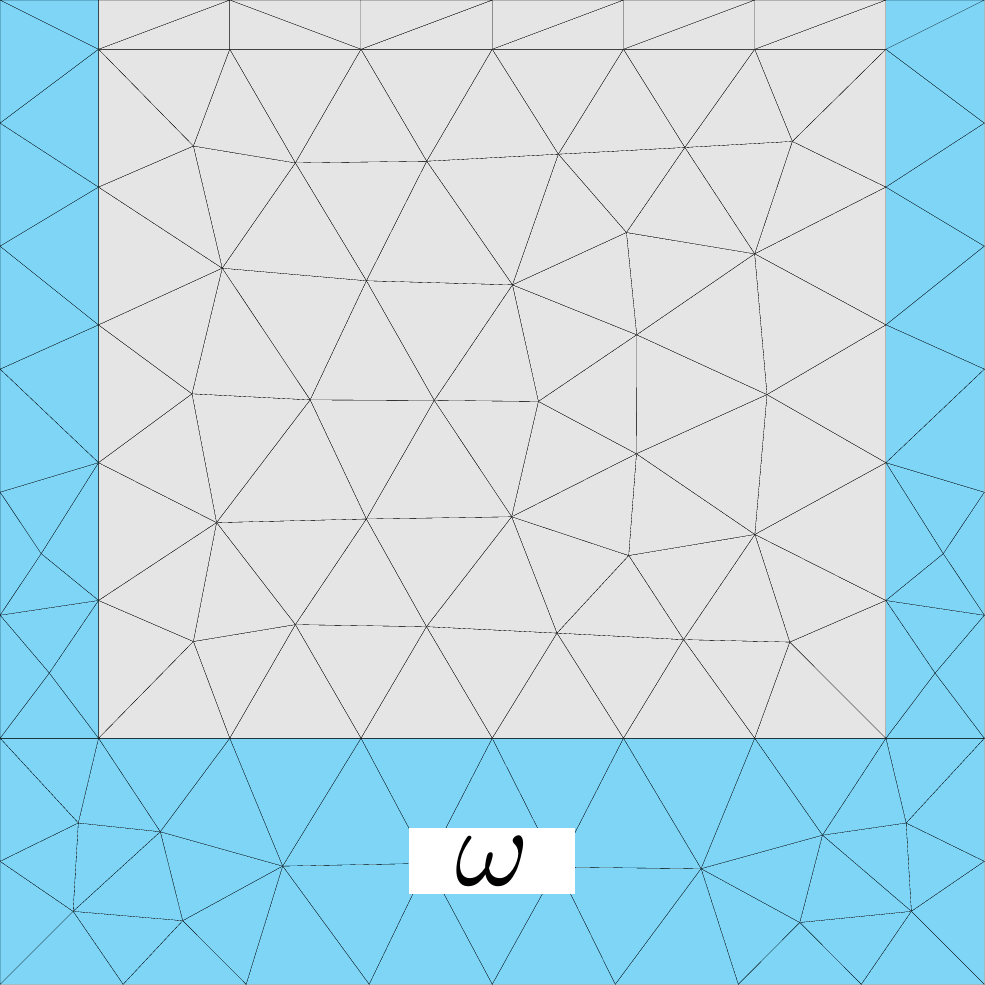}}
\centering
\subfloat[ $B$ on refined mesh.]
{ \label{fig:B-Ind-convex-level1} %
\includegraphics[width=.4\textwidth]{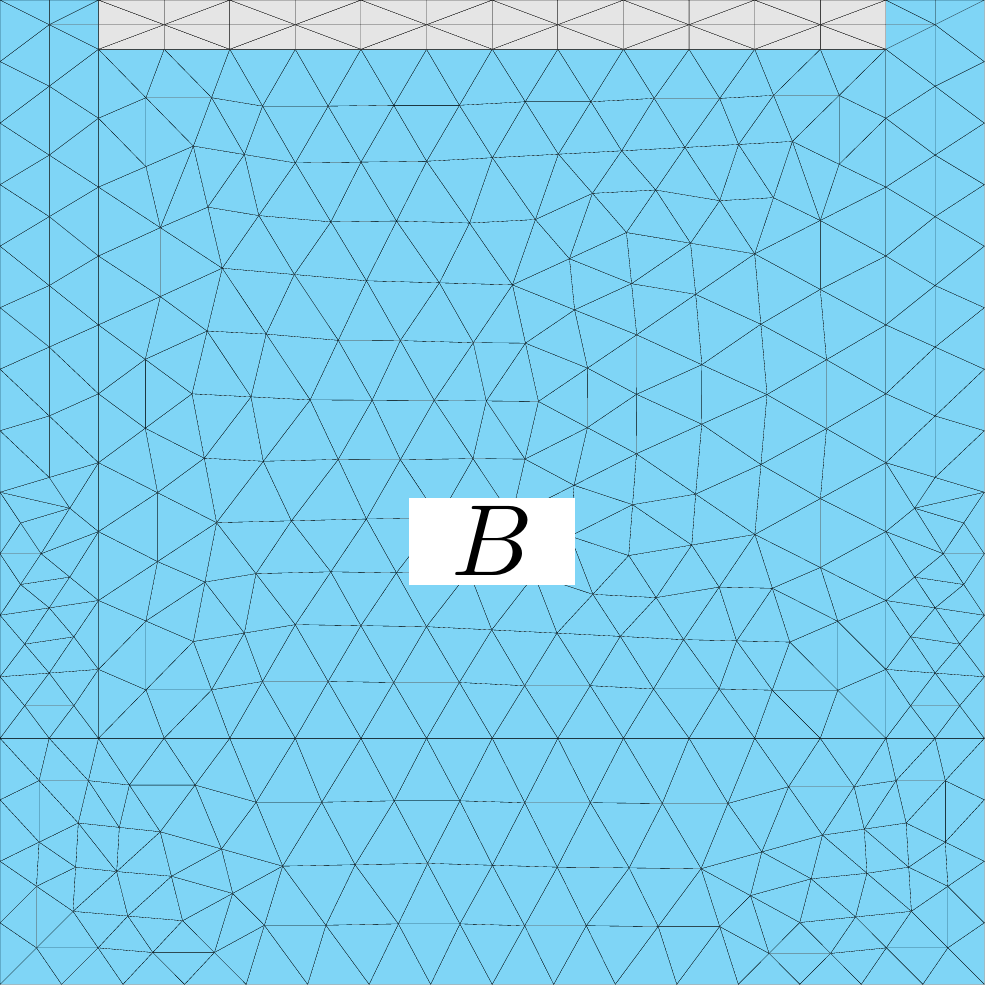}}
\caption{
Subdomains \cref{eq:subdomains-convex} for the numerical experiments in \cref{ssection:numexp_stab_tuning}-\cref{ssection:numexp_pollution}.
}
\label{fig:subdomains-convex}
\end{figure}

\subsection{Tuning of stabilization parameters}\label{ssection:numexp_stab_tuning}

Even though the convergence rates in \cref{thm:L2-error-perturbed-data} hold for any finite $\gammaCIP_1,\gammaGLS,\alpha >0$, optimizing the stabilizing parameters 
can have a significant impact on the quality of the obtained numerical solution and the stability of the linear systems.
For simplicity we will set the non-essential stabilization parameters, i.e.\ $\beta_j$ for $ j \geq 1 $ and $ \gammaCIP_{j}$ for $ j \geq 2 $, to zero in all numerical experiments to 
follow except for \cref{ssection:numexp_pollution} where their potential benefits are investigated.  
Here we will optimize for the remaining essential parameters. \par  
%For instance, there is a vast body of research available \cite{W13,ZW13,DH15,ZW22} on the choice of the penalty parameter $\gammaCIP$ for minimizing the dispersion error for well-posed Helmholtz equations. 
%\todo[inline]{ It is not clear whether the research in the literature applies to our case. 
%For instance, \cite{DH15} considers for order $p$ all jumps over interior facets up to order $p$ whereas \cite{ZW22} actually considers only the highest order jumps. 
%Additionally, these publications often suggest negative penalty parameters, see e.g.  \cite[Theorem 3.1]{ZW22}.  }
%Here we will optimize these penalty parameters experimentally on a fixed mesh which is obtained by two consecutive refinements of the initial mesh shown in \cref{fig:omega-Ind-convex-level0}. \par 
\begin{figure}[htbp]
\centering
\includegraphics[width=\textwidth]{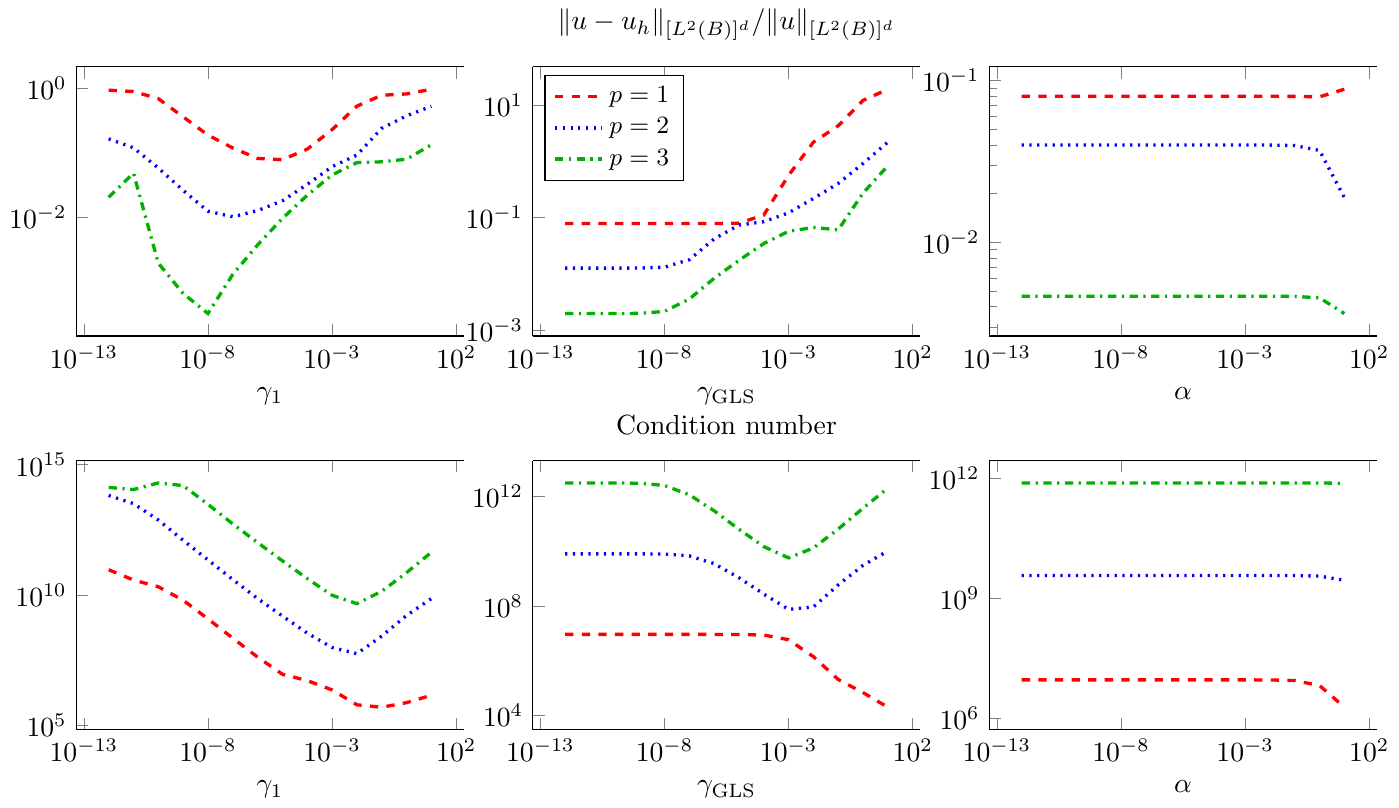}
\caption{
	Relative $L^2$-error $\norm{u-u_h}_{ B } / \norm{u}_{ B } $  in $B$ for geometrical setup of \cref{fig:subdomains-convex} and 
	oscillatory reference solution given in \cref{eq:oscillatory-refsol} for different stabilization parameters. 
	For each of these plots only one penalty parameter has been varied, while the other parameters remain fixed. For the left column we set $\gammaGLS= 10^{-12}, \alpha = 10^{-3}$, 
	for the middle column $\gammaCIP_1 = 10^{-5}/p^{3.5}, \alpha = 10^{-3} $ and for the right column $\gammaCIP_1 =  \gammaGLS =  10^{-5}/p^{3.5}$.
	}
\label{fig:convex-oscillatory-stabsweep-ill-posed-k6}
\end{figure}
The Lam\'{e} coefficients for this experiment will be chosen as 
\begin{equation}\label{eq:Lame_var}
\mu = 1 + \frac{1}{2} \sin(x) \sin(y), \qquad 
\lambda = 1.25 + \frac{1}{2} \cos(x) \cos(y).
\end{equation}
The right hand side $f$ is manufactured so that the exact solution of the problem is given by 
\begin{equation}\label{eq:oscillatory-refsol}
 u(x,y) = \sin(k \pi x) \sin( k \pi y)
	 \begin{pmatrix}
	  1 \\
	  1
\end{pmatrix}
.
\end{equation}
The dependence of the relative errors $\norm{u-u_h}_{ B } / \norm{u}_{ B } $ and the condition number of the system matrix on the penalty parameters is displayed in \cref{fig:convex-oscillatory-stabsweep-ill-posed-k6} for $k=6$ on a fixed mesh which is obtained by two consecutive refinements of the initial mesh shown in \cref{fig:omega-Ind-convex-level0}.
Firstly, it can be noticed that the error in terms of $\gammaCIP_1$ behaves like a well with an approximate minimum at $\gammaCIP_1 = 10^{-5}/p^{3.5}$.
The error invariably has to increase as $\gammaCIP_1$ goes to zero since we loose control over the condition number of the linear system.
We now fix $\gammaCIP_1 = 10^{-5}/p^{3.5}$ and show the behavior of the error as $\gammaGLS$ varies in the central plot of \cref{fig:convex-oscillatory-stabsweep-ill-posed-k6}.
It seems that $\gammaGLS$ basically has to be chosen sufficiently small.
However, let us mention that if $\gammaCIP_1$ was chosen smaller, e.g. $\gammaCIP_1=10^{-12}$, then the error would also exhibit a well-like structure similar as shown in the left column of \cref{fig:convex-oscillatory-stabsweep-ill-posed-k6}.
From now on it seems then appropriate to set $\gammaGLS  =  \gammaCIP_1$. 
The right plot of \cref{fig:convex-oscillatory-stabsweep-ill-posed-k6} shows that the Tikhonov parameter $\alpha$ has almost no influence on the error or the condition number.
We will set $\alpha = 10^{-3}$ from now on. \\

\begin{figure}[htbp]
\centering
\includegraphics[width=.5\textwidth]{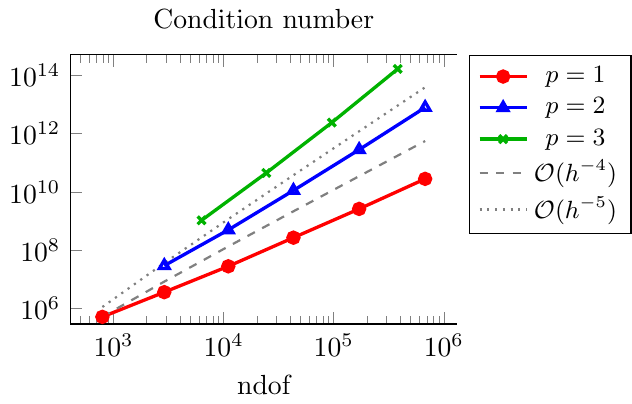}
\caption{
Condition number of the system matrix for $k=6$ in terms of the number of degrees of freedom, respectively the mesh width.
}
\label{fig:Cond-k6-convex}
\end{figure}

As shown in \cref{fig:convex-oscillatory-stabsweep-ill-posed-k6} the condition number of the linear systems is already very high on a moderately refined mesh and 
appears to scale unfavourably with $p$.
This is investigated further in \Cref{fig:Cond-k6-convex} which displays the condition number of the linear systems for our choice of penalty parameters under mesh refinement. 
An approximate scaling of $\mathcal{O}(h^{-2.5-p})$ is observed. 
Hence, when using higher polynomial orders one is more likely to encounter ill-conditioning effects.
This issue should be kept in mind when analyzing the numerical results on fine meshes, in particular for orders $p>1$.

\subsection{Data perturbations}\label{ssection:numexp_data_perturb}

With the stabilization parameters determined as above let us now proceed to the numerical verification of \cref{thm:L2-error-perturbed-data}.
The exact solution, respectively the exact data $u_{\omega}$ and $f$, will be chosen as in \cref{ssection:numexp_stab_tuning}, but we will 
assume now that only perturbed data
\[  \tilde{u}_{\omega} := u_{\omega} + \delta u, \quad  
    \tilde{f} := f + \delta f 
\]
with random perturbations 
\[
\norm{\delta u}_{ \omega } = \mathcal{O}\left(h^{p-\theta}\right), \quad 
\norm{\delta f}_{ \Omega } = \mathcal{O}\left(h^{p-\theta}\right),
\]
for some $\theta \in \mathbb{N}_0$ is available for implementing our method. 
According to \cref{thm:L2-error-perturbed-data} we have the error bound 
\begin{equation}\label{eq:numexp_perturb_error_bound} 
\norm{u - u_h}_{ B } \leq C   h^{\tau p-\theta} \left( 1 + \norm{u}_{ [H^{p+1}(\Omega)]^d } \right),
\end{equation}
which means that achieving convergence requires the condition $\tau p-\theta > 0$. 
\begin{figure}[htbp]
\centering
\includegraphics[width=\textwidth]{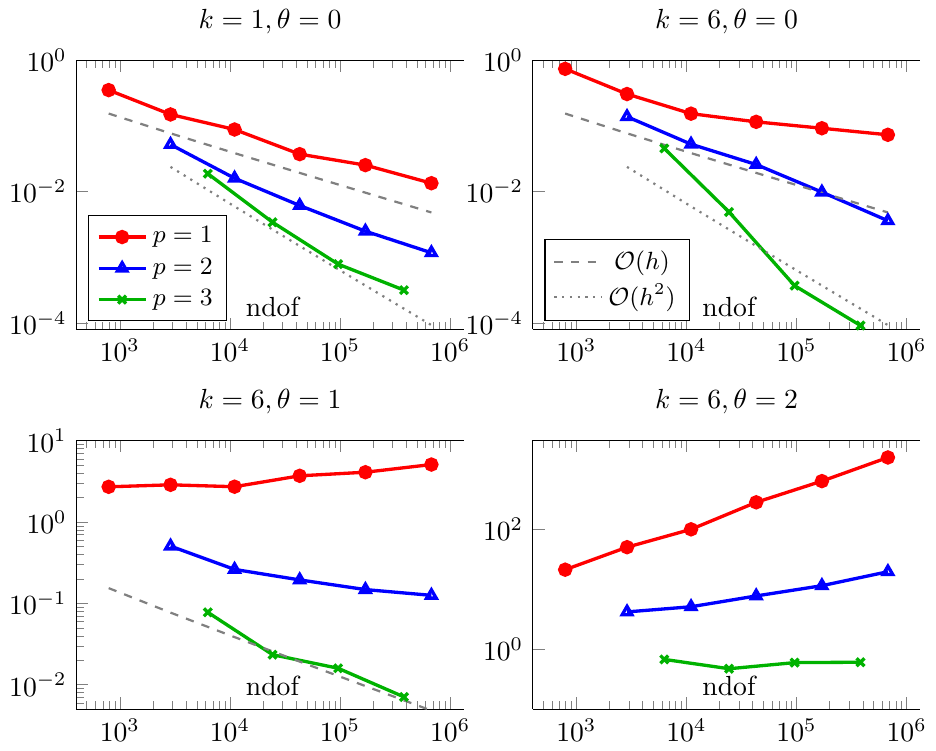}
\caption{
Relative error $\norm{u-u_h}_{ B } / \norm{u}_{ B } $ for geometrical setup shown in \cref{fig:subdomains-convex} in terms of the strength of the data perturbation.
}
\label{fig:Convex-Oscillatory-ill-posed-perturb}
\end{figure}
The relative errors for $\theta=0$ for two different wavenumbers $k=1$ and $k=6$ are compared in the first row of \cref{fig:Convex-Oscillatory-ill-posed-perturb}.
For both cases the observed convergence rates are consistent with \cref{thm:L2-error-perturbed-data}. 
As one may expect, for a smooth reference solution higher polynomial orders deliver higher accuracy with fewer degrees of freedom compared to piecewise affine linear elements.
Note that the errors are in general higher for $k=6$ than for $k=1$. The case $p=3$ is a lucky exception.
The dependence of the error on the wavenumber will be investigated more thoroughly in \cref{ssection:numexp_pollution}.
Let us now turn to the discussion of the second row of \cref{fig:Convex-Oscillatory-ill-posed-perturb} which displays the results for stronger perturbations.
According to \cref{eq:numexp_perturb_error_bound}, we would expect the $p=1$ method to diverge for $\theta = 1$ which is confirmed by \cref{fig:Convex-Oscillatory-ill-posed-perturb}.
The $p=2$ method still converges, albeit at an extremely slow rate, whereas the $p=3$ method at least manages to converge linearly. 
Based on this result it is consistent that for $\theta=2$ convergence is no longer observed for any $p \leq 3$ as shown in the lower right plot of \cref{fig:Convex-Oscillatory-ill-posed-perturb}.  

\subsection{Pollution error}\label{ssection:numexp_pollution}
In this section the dependence of the error on the wavenumber $k$ will be investigated. 
To this end, we stick to the setup of the previous two subsections. 
Based on results for the CIP-FEM applied to well-posed Helmholtz equations available in the literature \citep{W13,ZW13,DH15,ZW22}, 
one would expect a scaling of
\begin{equation}\label{eq:k-norm-scaling}
  k \norm{u-u_h}_{ B } + \norm{ \nabla u- \nabla u_h}_{ B }  \sim k 
\end{equation}
as $k$ increases when $kh < 1$ remains constant. 
%In view of recently derived wavenumber explicit stability bounds for the well-posed elastodynamics system, see \citep[Theorem 2.7]{BG16} and \citep[Proposition 4.3]{CFN19},
%it seems likely that these results indeed carry over to the CIP-FEM for elastodynamics.
To connect to these results, note that the method proposed in this article can also be applied in the setting in which boundary data is available on $\partial \Omega$.
Here we assume that Dirichlet data on the whole boundary is given which we implement in a strong sense (alternatively, a weak imposition following the technique of Nitsche could be used).
We will denote this as the ``well-posed'' problem to the distinguish it from the ``ill-posed'' problem we usually consider in this paper.
%\todo[inline]{Should we discuss our theoretical expectations for the ill-posed case here as well? (possible extension of Carleman estimate from Helmholtz to elastodynamics).  }
\par 

\begin{figure}[htbp]
\centering
\includegraphics[scale=1.15]{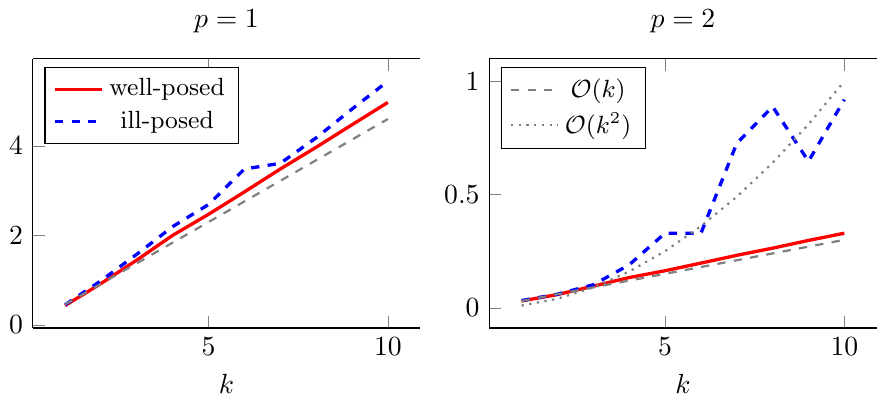}
\caption{The weighted error $ k \norm{u-u_h}_{ B } + \norm{ \nabla u- \nabla u_h}_{ B }  $ under mesh refinement for constant $kh$ using $\beta_j=0$ for $ j \geq 1 $. 
}
\label{fig:Convex-Oscillatory-kh-scaling-knorm}
\end{figure}

In \cref{fig:Convex-Oscillatory-kh-scaling-knorm} the weighted error in \cref{eq:k-norm-scaling} is then displayed for these two different settings. 
For this experiment the case of unperturbed data was considered.
For order $p=1$ a linear scaling in $k$ is observed regardless of whether the well-posed or ill-posed problem is considered. 
However, for $p=2$ the error for the ill-posed problem grows significantly faster than linear and appears to be unstable. 
This could possibly also be related to ill-conditioning of the linear systems, cp.\ \cref{ssection:numexp_stab_tuning}.
Similar results are observed for $p=3$. \par  
%Finding a coherent explanation for this observation could be an interesting path of future research.
%Here, we merely want to record that care has to be taken when applying higher order methods to the ill-posed elastodynamics problem 
%as the wavenumber increases.
Let us now investigate if this phenomenon can be mitigated by choosing $\beta_j > 0$, which amounts to adding the stabilisation term $s_{\beta}$, see equation \cref{eq:s_beta-def}, to the Lagrangian.
The results displayed in \cref{fig:Convex-Oscillatory-kh-scaling-knorm-perturb} show that by choosing $\beta_2$ large enough a linear scaling of the weighted error for $p=2$ 
can indeed be achieved. 
However, a comparison of the middle and right plot of \cref{fig:Convex-Oscillatory-kh-scaling-knorm-perturb} shows that this comes at the expense of increasing the overall error 
in the ill-posed case significantly, which is especially noticeable for lower wavenumbers. 
Note that this behavior does not appear in the well-posed case (the red lines in the middle and right plots of \cref{fig:Convex-Oscillatory-kh-scaling-knorm-perturb} are nearly identical) which has been run using exactly the same stabilization parameters.
Similar results are obtained for $\beta_j < 0$, i.e.\ only the magnitude of $\beta_j$ matters.  
We conclude by recording that care has to be taken when applying higher order methods to the ill-posed elastodynamics problem as the wavenumber increases.
Further research is required to find satisfactory remedies for this issue.

\begin{figure}[htbp]
\centering
\includegraphics[width=\textwidth]{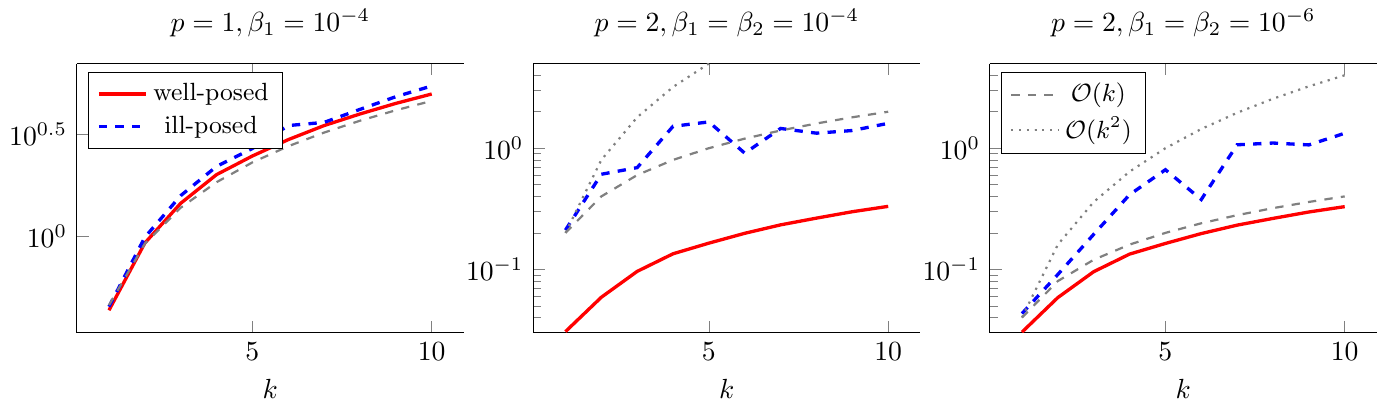}
\caption{The scaled error $ k \norm{u-u_h}_{ B } + \norm{ \nabla u- \nabla u_h}_{ B }  $ under mesh refinement for constant $kh$ utilizing the 
additional stabilization term $s_{\beta}$ defined in \cref{eq:s_beta-def}.  
}
\label{fig:Convex-Oscillatory-kh-scaling-knorm-perturb}
\end{figure}

\subsection{Influence of the geometry}\label{ssection:numexp_split_geom}
It is well-known, see e.g.\ \citep{BNO19,N20,BDE21}, that the geometry of the data and target sets has a major influence on the quality of the reconstruction outside the data domain. 
Roughly speaking, the best results can be expected if the target set $B$ is part of the convex hull of the data set $\omega$ as in the setup shown in \cref{fig:subdomains-convex}.
To increase the level of difficulty, let us now shrink the data set to 
\begin{equation}\label{equation:omega_xi} 
\omega = [0,0.1] \times [0,\xi] \cup [0.9,1.0] \times [0,\xi] \cup [0.1,0.9] \times [0,0.25] 
\end{equation} 
for $\xi = 0.6$. 
This splits the target domain into two halves 
\begin{equation}\label{equation:B_xi}
B_- := [0,1] \times [0,\xi], \qquad
B_+ := [0.1,0.9] \times [\xi,0.95],  
\end{equation}
where $B_-$ is in the convex hull of $\omega$ while $B_+$ is not. 
A sketch of the geometrical setup is given in \cref{fig:split_data_domain_geom}. 
\begin{figure}[htbp]
\centering
\subfloat[ $\omega$ on coarsest mesh.]%
{\label{fig:omega-splitgeom}
\includegraphics[width=.4\textwidth]{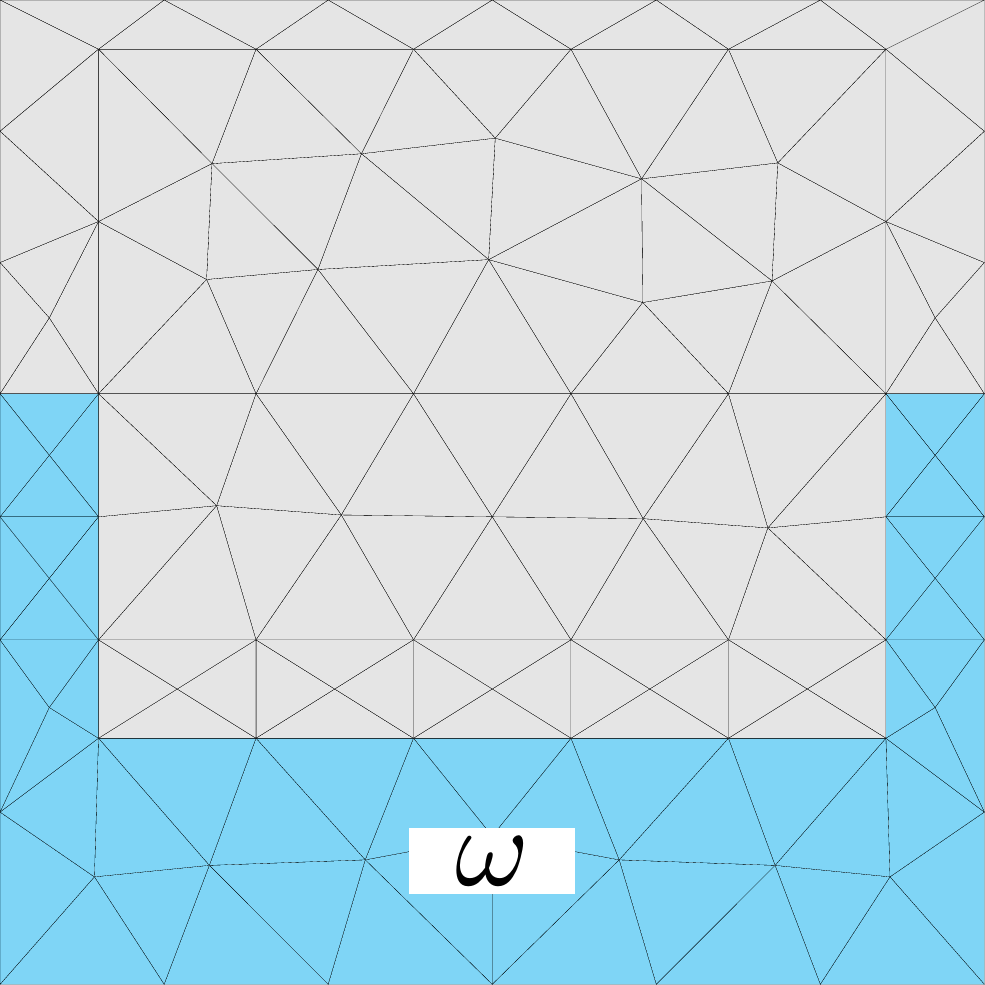}}
\centering
\subfloat[ $B_{\pm}$ on refined mesh.]
{ \label{fig:B-splitgeom} %
\includegraphics[width=.4\textwidth]{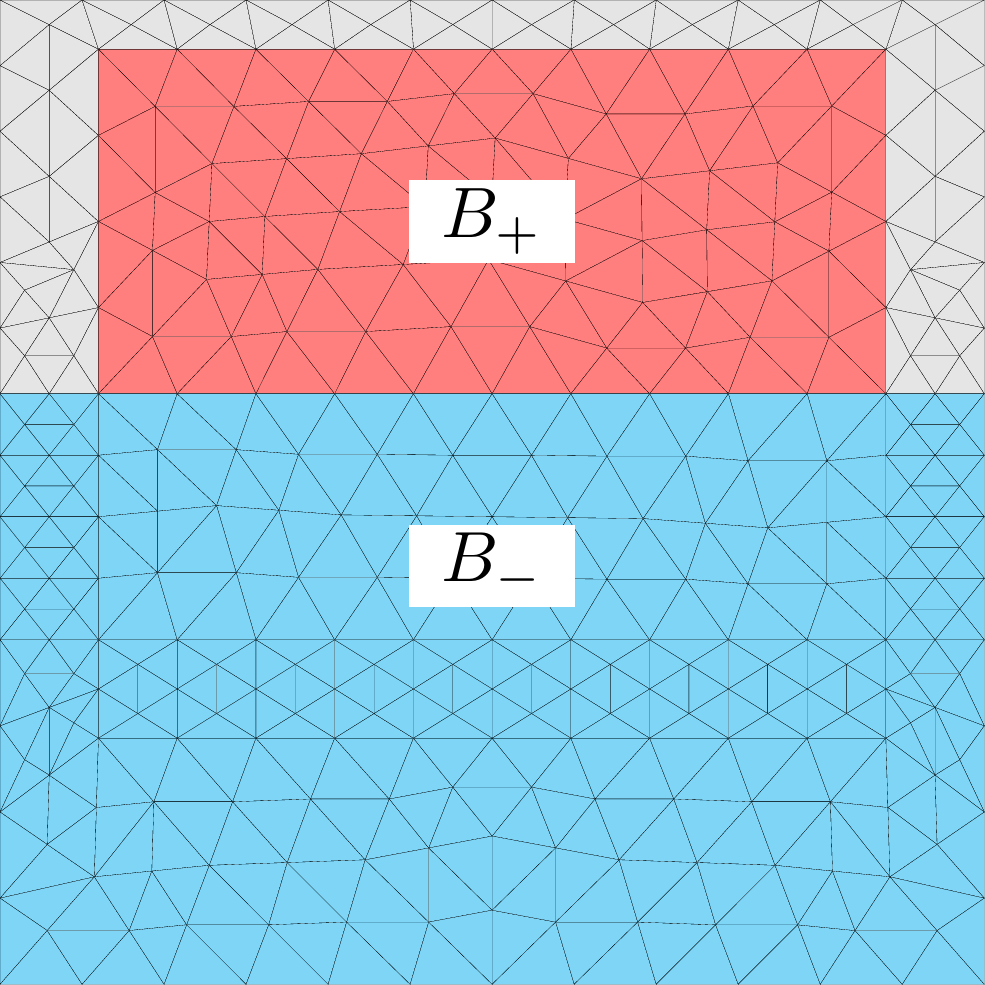}}
\caption{
The data and target domains for the geometry defined in \cref{equation:omega_xi}-\cref{equation:B_xi}.
	}
\label{fig:split_data_domain_geom}
\end{figure}
Let us consider constant coefficients $\mu = 1$ and $\lambda = 1.25$ throughout the entire domain and $k=1$ to render the remainder of the problem as simple as possible. 
The relative errors (using unperturbed data) in the two subdomains $B_{\pm}$ are displayed in \cref{fig:Split-domain-ill-posed-k1-div} as solid lines. 
A stark contrast can be observed (note the different scalings of the vertical axis). 
While near optimal rates of $\mathcal{O}(h^p)$ are obtained in $B_{-}$, we have to use $p=3$ to reach linear convergence rates in $B_{+}$.
This is a clear indication that the conditional stability, in particular the value of the exponent $\tau$ in \cref{eq:cond-stability}, is very sensitive 
to the geometry of the sets $\omega$ and $B$. 
\subsubsection{Adding additional information on divergence of wave diplacement} 
Let us check if these results can be improved if more a priori information is provided. 
Now we will assume that not only $u$ is given in $\omega$ as data, but additionally $\nabla \cdot u = q $ is available in the entire domain $\Omega$. 
This basically means that the divergence part of the stress tensor $\sigma(u)$ in \cref{def:Lu} is known.
The proposed method can easily be modified to cover this case by adding $\frac{1}{2} \norm{ \nabla \cdot u_h - q }_{ \Omega }^2$ as an additional term to the Lagrangian in \cref{eq:Lagrangian}. 
The relative $L^2$-errors for running the same problem as above are displayed as dashed lines in \cref{fig:Split-domain-ill-posed-k1-div}. 
Even though a significant decrease in the absolute value of the errors is observed, the asymptotic convergence rates improve only marginally. 
Hence, additional information on the divergence does apparently not enhance the conditional stability of the problem.

\begin{figure}[htbp]
\centering
\includegraphics[width=\textwidth]{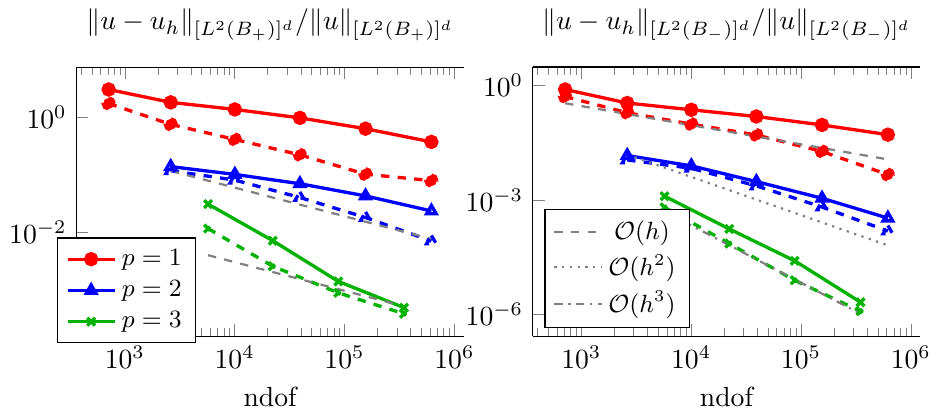}
\caption{ 
The solid lines display the relative $L^2$-errors in the two different parts of the target domain: $B_{-}$ is contained in the convex hull of the data domain while
$B_{+}$ is outside of it.
The dashed lines show the same quantities when additional information on $ \nabla \cdot u$ in $\Omega$ is included in the Lagrangian. 
	}
\label{fig:Split-domain-ill-posed-k1-div}
\end{figure}

\subsection{Jumping shear modulus}\label{ssection:numexp_jump_shear}

\subsubsection{Jump in a plane}\label{ssection:numexp_jump_in_plane} 
Being able to treat Lam\'{e} parameters that exhibit jump discontinuities is of particular interest in applications. 
For example, jumps of $\mu$ and $\lambda$ occur at positions of seismic discontinuities in the Earth's mantle. 
To emulate this behavior in our toy problem, we introduce an artificial interface $\Gamma := \{ (x,y) \in \Omega \mid  y = \eta \}$ and consider a piecewise constant shear modulus 
\bel{eq:mu_pm}
\mu  = 
\begin{cases}
	\mu_+, & \text{for } y > \eta, \\
	\mu_{-} & \text{for } y <  \eta.
\end{cases}
\ee
A weak solution has to fulfill the interface conditions 
\bel{eq:jump_cond_weak}
\jump{u}_{\Gamma} = 0; \quad \jump{\sigma(u) \cdot \mathbf{n}}_{\Gamma} = 0, \quad \text{across } \Gamma.
\ee
Let us denote $\Omega_{+} := \Omega \cap \{ y > \eta \}$ and $\Omega_{-} := \Omega \cap \{ y < \eta \}$. 
\begin{figure}[htbp]
\centering
\includegraphics[width=\textwidth]{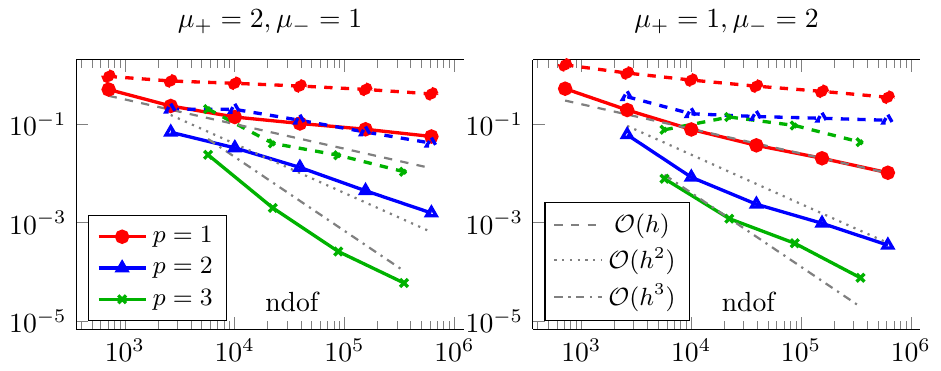}
\caption{Relative errors for geometry shown in \cref{fig:split_data_domain_geom} for a shear modulus which jumps in the plane separating the subdomains $B_{\pm}$.  
We consider $k=4$ and measure the errors in the convex and non-convex part of the target domain separately. The solid lines display $\norm{u-u_h}_{  B_{-} } / \norm{u}_{ B_{-} } $ while the dashed lines show $\norm{u-u_h}_{  B_{+} } / \norm{u}_{ B_{+} } $. }
\label{fig:Jump-Split-domain-k4}
\end{figure}
We make the following ansatz for the wave displacement $u_+ $ in $\Omega_+$ and $u_{-}$ in $\Omega_{-}$: 
\bel{eq:refsol_jump}
u^{+} = \begin{pmatrix} 
	(a_1 + b_1 y + c_1 y^2) \sin(k \pi x) \\
	(a_2 + b_2 y + c_2 y^2) \cos(k \pi x) 
	\end{pmatrix}, 
	\; 
u^{-} = \begin{pmatrix} 
	\sin(k \pi x) \cos( k \pi (y-\eta)) \\  	
	\cos(k \pi x) \cos( k \pi (y-\eta))  	
\end{pmatrix}. 
\ee
As shown in \cref{section:refsol_jump}, the interface conditions of \cref{eq:jump_cond_weak} can be fulfilled by choosing: 
\begin{align}
\begin{aligned}\label{eq:abc-jump}
& b_1 = 0, \qquad   c_1 = \frac{k \pi}{2 \eta} \left[ \frac{\mu_+ - \mu_{-} }{ \mu_{+} }   \right], \qquad a_1 = 1 - c_1 \eta^2, \\ 
&  b_2 = 1, \qquad c_2 = -\frac{1}{2 \eta}, \qquad a_2 = 1 - b_2 \eta - c_2  \eta^2.
\end{aligned}
\end{align}

For the numerical experiment we consider the geometry shown in \cref{fig:split_data_domain_geom} and set $\eta = \xi = 0.6$ so that the jump 
occurs in the plane separating the subdomains $B_{\pm}$ and is respected by the mesh. 
A contrast of about two between $\mu_{+}$ and $\mu_{-}$ is realistic for applications in Earth's seismology, see e.g.\ the reference Earth model of \citep{DA81}.
So we consider $\mu_{\pm} \in \{1,2 \}$ and let $\lambda = 1.25$.  
The relative errors for unperturbed data are displayed in \cref{fig:Jump-Split-domain-k4} for $k=4$.
Similar results as in \cref{ssection:numexp_split_geom} in which a globally constant shear modulus was considered are observed, i.e.\ we achieve rates of nearly 
$\mathcal{O}(h^p)$ in $B_{-}$, whereas the method struggles in $B_+$ but does not break down either. 
By comparing the left and the right plot in \cref{fig:Jump-Split-domain-k4} we notice that doubling the value of $\mu$ apparently leads to a reduction 
of the errors in the respective subdomain. 
This is reasonable since it basically amounts to halving the wavenumber in this subdomain. 
Overall, the presence of a jump appears to have little influence for the considered problem despite the fact that the theoretical error estimate given in \cref{thm:L2-error-unperturbed}
cannot be applied to this case due to insufficient regularity of the shear modulus. 

\subsubsection{Data only at bottom with jump between target domain and exterior} 

Finally, we consider an even more challenging setup shown in \cref{fig:BottomDataJumpIncl-geom} to explore the limits of the proposed method. 
Now data $\omega = [0,1] \times [0,0.25]$ is only available at the bottom of the domain.
The shear modulus is set to $\mu_e$ in the exterior and to $\mu_i$ iside the target domain 
\bel{eq:B_square}
B_- \cup B_+ = [x_L,x_R] \times [y_L,y_R] = [0.25,0.75] \times [0.25,0.9], 
\ee 
which is separated by the plane $ \{ y = 0.6 \}$ into two halves.
Note that in this example the entire target domain is situated outside the convex hull of the data set.
However, $B_-$ is closer to the data set than $B_+$ which should aid the reconstruction. 
We set $\lambda = 1.25$ and use the following reference solution 
\bel{eq:refsol_jump_incl}
u = \zeta^2 \begin{pmatrix} 
	\cos(k \pi x)  \sin(k \pi y)   \\
	\cos(k \pi x)  \cos(k \pi y)  
	\end{pmatrix}
\text{ in }  B_- \cup B_+, \; 
u = \zeta^2 
  \begin{pmatrix}  
   \sin(k \pi x)  \sin(k \pi y)  \\
   \sin(k \pi x)  \cos(k \pi y)    
	\end{pmatrix} 
%\text{ in } 
%\Omega \setminus (B_- \cup B_+)
\text{ else, } 
\ee
for $\zeta := (x-x_L)(x-x_R)(y-y_L)(y-y_R)$.
The results shown in \cref{fig:Bottom-data-incl-k4} for $k=4$ are already so poor even without a jump that we decided 
to lower the wavenumber even further to $k=1$ to investigate whether the effect of a jump can be detected. 
However, the results for $k=1$ shown in \cref{fig:Bottom-data-incl-k1} for different combinations of $\mu_i$ and $\mu_e$ do not provide evidence 
that the presence of a jump is of significant importance here. 
Instead, the major variable appears to be the distance between the data and target domain which accounts for the observation that the errors are about two 
order of magnitudes larger in $B_+$ than in $B_-$. 
Another important factor is the size of the shear in the target domain as already observed in \cref{ssection:numexp_jump_in_plane}.
Finally, we remark that our method also performed well in several further setups featuring jump discontinuities in the shear modulus not shown in this article.

\begin{figure}[htbp]
\centering
\subfloat[ Geometry.]%
{\label{fig:BottomDataJumpIncl-geom}
\includegraphics[scale=0.445]{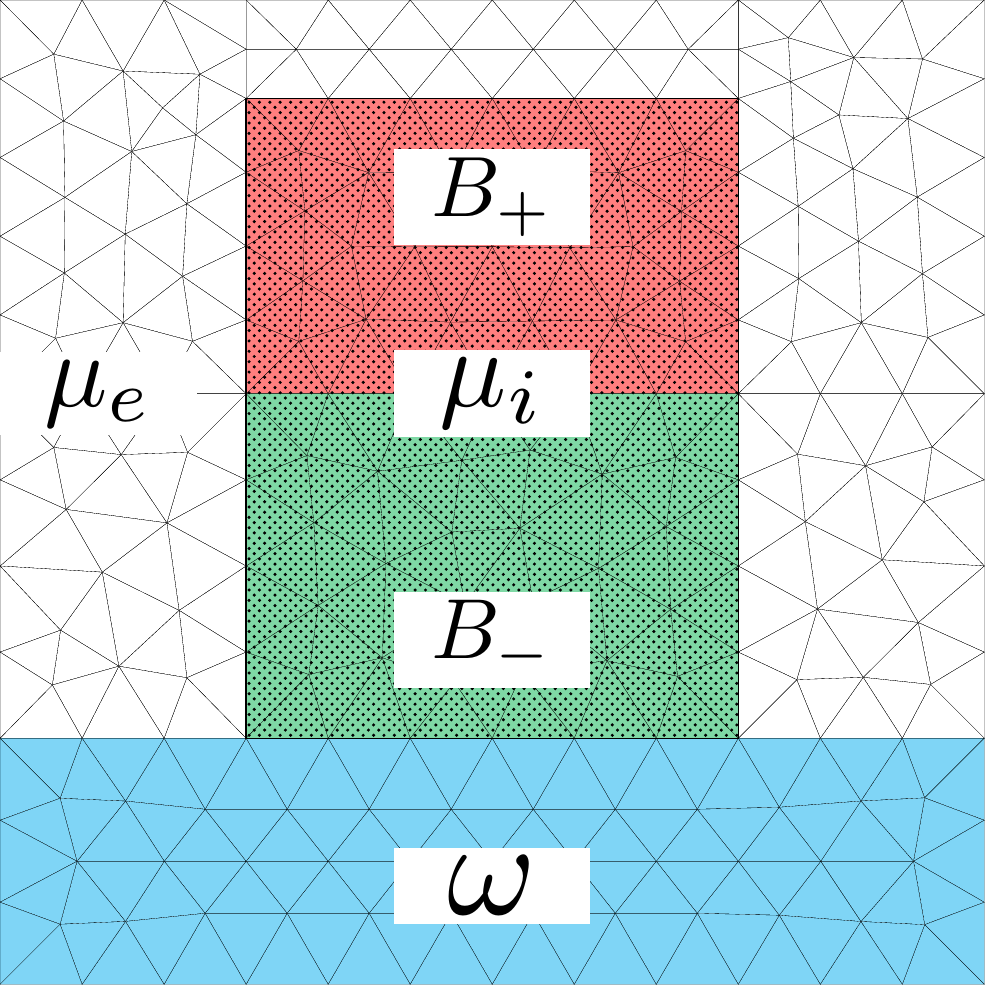}}
\centering
\subfloat[$k=4$.]
{ \label{fig:Bottom-data-incl-k4} 
\includegraphics[scale=1.2]{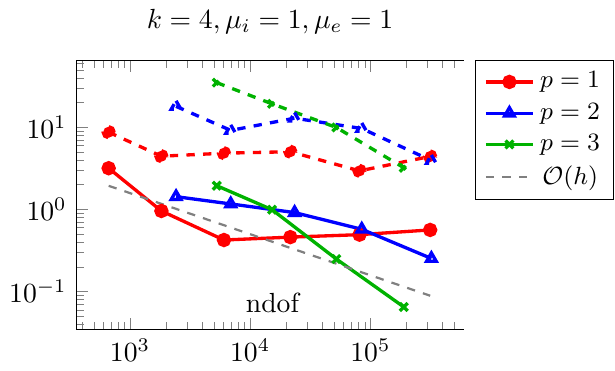}} 
\\ 
\centering
\subfloat[$k=1$.]{ \label{fig:Bottom-data-incl-k1} 
\includegraphics[width=\textwidth]{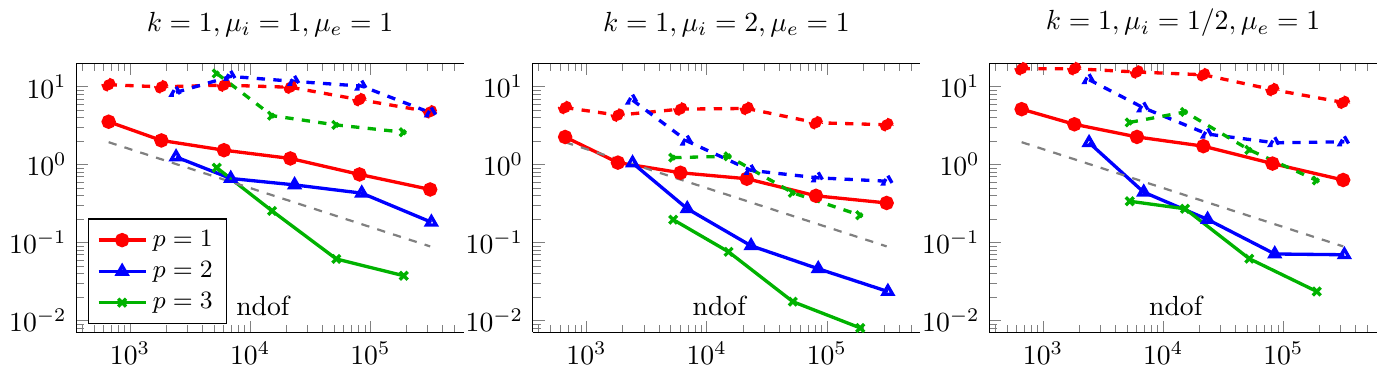}} 
	\caption{ Relative $L^2$-errors $\norm{u-u_h}_{ B_{-}  } / \norm{u}_{  B_{-}   } $ (solid) and  $\norm{u-u_h}_{  B_{+}  } / \norm{u}_{  B_{+} } $ (dashed) for geometry from \cref{fig:BottomDataJumpIncl-geom}. Here, the shear modulus is $\mu_+$ in $B_+ \cup B_-$ and $\mu_-$ in the complement. 
	}  	
\label{fig:BottomDataJumpIncl}
\end{figure}

\section{Conclusion}\label{section:conclusion} 

In this paper we presented a high order stabilized finite element method for unique continuation subject to the Lam\'{e} system. 
The method proceeds by first formulating the data assimilation problem as an ill-posed minimization problem at the discrete level and then adding 
carefully chosen stabilization terms to enhance numerical stability without leading to an exaggerated perturbation of the solution.
Convergence rates have been derived and verified in numerical experiments. 
It turned out that higher order polynomial degrees in the FEM can on the one hand improve efficiency, but on the other hand are at greater risk 
to suffer from ill-conditioning effects. 
We have also observed numerically that the geometry of the data and target domains plays a crucial role for the conditional stability of the problem, which suggests 
that the wavenumber-explicit convergence results proven in \citep{BNO19,N20} for the constant coefficient Helmholtz equation under specific convexity assumptions on the geometry
may extend to the Lam\'{e} system of elastodynamics.
Moreover, in our numerical experiments the geometry appeared to be of far greater importance than the regularity of the Lam\'{e} coefficients as our good numerical results for a discontinuous shear modulus suggest.
However, this point deserves further investigation since it is of course possible that we simply failed to trigger a problematic behavior in our limited set of experiments. 
\par 
Another interesting direction for future research could be to extend our methodology to the reconstruction of the Lam\'{e} parameters from measurements of the wave displacement in the interior of $\Omega$.
Due to its relevance in practical applications, this problem has already attracted significant research interest, see e.g.\ \citep{MZM10,LS17,DBS19}. 
Note also that the potential of the augmented Lagrangian method (on which our approach is to some extent based) for parameter identification problems is well-established, see \citep{IK90,CT03}.

\backmatter

%\bmhead{Supplementary information}

%If your article has accompanying supplementary file/s please state so here. 

%Authors reporting data from electrophoretic gels and blots should supply the full unprocessed scans for key as part of their Supplementary information. This may be requested by the editorial team/s if it is missing.

%Please refer to Journal-level guidance for any specific requirements.

\bmhead{Acknowledgments}
The authors would like to thank Prof. Lauri Oksanen for interesting discussions.
Funding by EPSRC grant EP/V050400/1 is gratefully acknowledged. 
%Please refer to Journal-level guidance for any specific requirements.
\section*{Declarations}
%
%Some journals require declarations to be submitted in a standardised format. Please check the Instructions for Authors of the journal to which you are submitting to see if you need to complete this section. If yes, your manuscript must contain the following sections under the heading `Declarations':

\begin{itemize}
\item \textbf{Funding} This work was funded by EPSRC grant EP/V050400/1.
\item \textbf{Competing interests} The authors have no relevant financial or non-financial interests to disclose.
%\item Conflict of interest/Competing interests (check journal-specific guidelines for which heading to use)
%\item Ethics approval 
%\item Consent to participate
%\item Consent for publication
%\item Availability of data and materials
\item \textbf{Code availability} A docker image containing all software and instructions to reproduce the numerical results in this paper is available from \texttt{zenodo}: \cite{BP22_zenodo}. 
%\item Authors' contributions
\end{itemize}

\begin{appendices}

\section{Auxiliary proofs for conditional stability estimate}\label{section:Continuum_stability_estimate_derivation}

Proof of \cref{lem:IIP-Robin}:
\begin{proof}
The bound in \cref{eq:stability_IIP_k} with $\norm{f}_{ \Omega }$ on the right hand side 
and a factor of $k$ instead of $k^2$ has been established in \citep[Theorem 2.7]{BG16}.
We follow a standard arguement, see e.g.\ \citep[text between Lemma 3.3 and 3.4]{CM08}  or \citep[proof of Corollary 1.10]{BSW16}, to weaken the norm on the right hand side to $\norm{f}_{\VC^{\prime}}$ at the expense of collecting an additional factor of $k$. \par 
The variational formulation of \cref{eq:PDE-IIP} is given by:   
 \begin{equation}\label{eq:IIP_weak}
	\text{Find } u \in \VC \text{ such that }  b(u,v) = \langle f, v \rangle_{\VC^{\prime} \times \VC  } \qquad \forall v \in \VC, 
\end{equation}
where 
\[
b(u,v) := \int\limits_{\Omega} \left[2 \mu  \sdd(u) : \sdd(\bar{v}) + \lambda \div(u) \div(\bar{v}) - \rho u \bar{v} \right] \dX 
	+ i k \int\limits_{\partial \Omega} u \bar{v} \; \dS. 
\]
Here, the identities $ \sdd(u) : \nabla{\bar{v}} = \sdd(u) : \sdd(\bar{v})$ and  $ \left( \nabla \cdot u \right) I : \nabla \bar{v} = \div(u) \div(\bar{v})$ 
have been employed and we use the notation $\langle \cdot , \cdot \rangle_{ \VC^{\prime} \times \VC }$ for the duality bracket on $\Omega$.
Let us endow $\VC$ with the weighted norm
\[
\norm{u}_{ [H_{k}^1(\Omega)]^d  }^2 := \norm{\nabla u}_{ \Omega }^2 + k^2 \norm{u}_{ \Omega }^2. 
\]
From Korn's second inequality $ \norm{ \sdd(u) }_{ \Omega }^2 \geq \tilde{C}_1 \norm{ \nabla u }_{  \Omega }^2 - \tilde{C}_2 \norm{ u }_{  \Omega }^2 $ and the assumption $\lambda(x) + 2 \mu(x) \geq \delta_0 > 0$ the  G{\aa}rding inequality
\begin{align}
	\Re b(u,u) &= \int\limits_{\Omega} \left( 2 \mu \abs{ \sdd(u) }^2 + \lambda \abs{ \div(u) }^2 - k^2 \abs{u}^2 \right) \; \dX \nonumber \\ 
   & \geq  \int\limits_{\Omega} \left( \min\{\lambda +  2\mu, 2\mu \} \abs{ \sdd(u) }^2 - k^2 \abs{u}^2 \right) \; \dX \nonumber \\ 
	&	\geq C_1 \norm{ \nabla u}^2_{ \Omega }  - (C_2 + k^2) \norm{u}_{ \Omega  }^2 \label{eq:Garding_ieq}
\end{align}
for positive constants $C_1$ and $C_2$ independent of $k$ follows. 
Hence, there exists constants $C^{\prime}$ and $C^{\prime \prime}$ independent of $k$ such that 
\[
b_0(u,v) := b(u,v) + (C^{\prime} + 2k^2)(u,\bar{v})_{ \Omega } 
\]
with $b(\cdot,\cdot)$ as defined in (a), fulfills
\begin{equation}\label{eq:Garding_k_explicit}
\Re b_0(v,v) \geq C^{\prime \prime}  \norm{v}_{ [H_{k}^1(\Omega)]^d  }^2 \quad \forall v \in \VC.
\end{equation}
The Lax-Milgram lemma then implies that the solution $u_0 \in \VC$ of 
\[ b_0(u_0,v) = \langle f, v \rangle  \qquad \forall v \in \VC \]
fulfills $\norm{u_0}_{ H_{k}^1(\Omega) } \leq C  \norm{f}_{ [H_{k}^1(\Omega)]^{\prime} }$ with $C$ being independent of $k$.
The following argument is now based on the observation that the solution $u$ of the original problem given in \cref{eq:IIP_weak} can be split into $u = u_0 + w$, where $w$ solves 
\[ b(w,v) = (C^{\prime} + 2k^2) (u_0,\bar{v})_{ \Omega } \qquad   \forall v \in \VC.  \]
Indeed, as we have 
\[
b(u_0 + w, v) = b(u_0,v) + b(w,v) = b(u_0,v) + (C^{\prime} + 2k^2)(u_0,\bar{v})_{ \Omega } = b_0(u_0,v) = \langle f, v \rangle 
\]
for all $v \in \VC$. 
Since $w$ solves \cref{eq:PDE-IIP} with a right hand side $ (C^{\prime} + 2k^2) u_0 \in [L^2(\Omega)]^d$, we can apply \cite[Theorem 2.7]{BG16} to obtain 
\[ \norm{w}_{ [H_{k}^1(\Omega)]^d } \leq C k^3 \norm{u_0}_{ [L^2(\Omega)]^d } \leq C k^2 \norm{u_0}_{ [H_{k}^1(\Omega)]^d } 
\leq C k^2  \norm{f}_{ [H_{k}^1(\Omega)]^{\prime} },
\]
where the already known bound on $u_0$ has been used. 
Combining both estimates leads to 
\[
\norm{u}_{ [H_{k}^1(\Omega)]^d  }  
\leq C k^2 \norm{f}_{ [H_{k}^1(\Omega)]^{\prime} }
\leq C k^2 \norm{f}_{ \VC^{\prime} },
\]
where the last inequality follows from 
\[
\norm{f}_{ [H_{k}^1(\Omega)]^{\prime} } 
= \sup_{ v \in  [H_{k}^1(\Omega)]^d }  \frac{ \langle f, v \rangle  }{ \norm{v}_{ [H_{k}^1(\Omega)]^d } }  \leq 
\norm{f}_{ \VC^{\prime} }  \sup_{ v \in  [H_{k}^1(\Omega)]^d } \frac{ \norm{v}_{\VC}  }{  \norm{v}_{ [H_{k}^1(\Omega)]^d }  } \leq  
\norm{f}_{ \VC^{\prime} } 
\]
because of $k \geq 1$. 
%\end{enumerate} 
\end{proof} 

Proof of \cref{cor:cond-stability}:
\begin{proof}
We choose nested balls $B_{R_j}(x_0), j=1,2,3$ and a compactly contained subset $\Omega_1$ of $\Omega$ such that 
\[  B_{R_1}(x_0) \subset B_{R_2}(x_0)  \subset B_{R_3}(x_0) \subset \Omega_1 \subset \Omega. \]
Let $\chi \in C^{\infty}_{0}(\Omega)$ such that $\chi \equiv 1$ on $\Omega_1$. 
By \cref{lem:IIP-Robin} there exists a unique solution $\tilde{u} \in \VC $ of 
\[
\left\{ \begin{array}{rcll} &\mathcal{L} \tilde{u} &=  \chi \mathcal{L}u & \text{ in } \Omega ,\\
&  \sigma(\tilde{u}) \cdot \mathbf{n}_{\partial \Omega} + i k \tilde{u} & = 0 \quad & \text{ on } \partial \Omega,  \end{array}\right.
\]
which fulfills the stability bound $\norm{\tilde{u}}_{\VC} \leq C \norm{ \chi \mathcal{L}u }_{ \VC^{\prime} } $. 
%Let $V_{\mathbb{C},0}$ denote the space $\VC$ with zero Dirichlet boundary conditions and $V_{\mathbb{C},0}^{\prime}$ its corresponding dual space. 
For $\phi \in \VC $ with $\norm{\phi}_{\VC} = 1$ we have $\chi \phi  \in V_{\mathbb{C},0} $ and so 
\[  \langle \chi \mathcal{L} u, \phi \rangle_{ \VC^{\prime} \times \VC  } 
 \leq \norm{ \mathcal{L} u }_{ V_{\mathbb{C},0}^{\prime} } \norm{ \chi \phi  }_{ V_{\mathbb{C},0}  } \leq C \norm{ \mathcal{L} u }_{ V_{\mathbb{C},0}^{\prime} }. 
\] 
Note also that $ \norm{ \mathcal{L} u }_{ V_{\mathbb{C},0}^{\prime} } = \norm{ \mathcal{L} u }_{ V_{0}^{\prime} } $ since 
$\mathcal{L} u $ is real-valued. 
We conclude that $ \norm{\tilde{u}}_{\VC} \leq C  \norm{ \mathcal{L} u }_{ V_{0}^{\prime} } $.
Set $\tilde{w} := u - \tilde{u}$ which fulfills $\mathcal{L} \tilde{w} = 0$ in $\Omega_1$ since $\chi \equiv 1 $ there and we have 
\begin{equation}\label{eq:cond_stab_proof_aux_bound}
\norm{u}_{  B_{R_2(x_0)}  } \leq  \norm{ \tilde{u} }_{  B_{R_2(x_0)}  } + \norm{ \tilde{w} }_{  B_{R_2(x_0)}  }  
\leq C  \norm{ \mathcal{L} u }_{ V_{0}^{\prime} } + \norm{ \tilde{w} }_{  B_{R_2(x_0)}  }. 
\end{equation}
To bound the second term, we can apply \cref{thm:three-ball-ieq-homog} with $\Omega = \Omega_1$ to obtain:
\begin{align*}
\norm{ \tilde{w} }_{  B_{R_2}(x_0) } \leq C 
\norm{ \tilde{w} }_{  B_{R_3}(x_0) }^{1 - \tau } 
\norm{ \tilde{w} }_{  B_{R_1}(x_0) }^{\tau}.
\end{align*} 
We now revert back to $u$ in the right hand side 
\begin{align*}
\norm{ \tilde{w} }_{  B_{R_2}(x_0) } & \leq C 
\left( \norm{ \tilde{u} }_{ \VC  } +  \norm{ u }_{ \Omega }  \right)^{1 - \tau}  
\left( \norm{ \tilde{u} }_{ \VC  } +  \norm{ u }_{  B_{R_1}(x_0) }  \right)^{\tau}  \\ 
& \leq C
\left(  \norm{ \mathcal{L} u }_{ V_{0}^{\prime} }  +  \norm{ u }_{ \Omega }  \right)^{1 - \tau}  
\left(  \norm{ \mathcal{L} u }_{ V_{0}^{\prime} }  +  \norm{ u }_{  B_{R_1}(x_0)  }  \right)^{\tau}.  
\end{align*}
Plugging this bound into \cref{eq:cond_stab_proof_aux_bound} we obtain  
\begin{align*}
\norm{u}_{  B_{R_2(x_0)}  } & \leq C 
\left[  \norm{ \mathcal{L} u }_{ V_{0}^{\prime} }^{1-\tau} \norm{ \mathcal{L} u }_{ V_{0}^{\prime} }^{\tau} + \left(  \norm{ \mathcal{L} u }_{ V_{0}^{\prime} }  +  \norm{ u }_{ \Omega }  \right)^{1 - \tau}  \left(  \norm{ \mathcal{L} u }_{ V_{0}^{\prime} }  +  \norm{ u }_{  B_{R_1}(x_0)  }  \right)^{\tau}   \right]  \\
& \leq C  \left(  \norm{ \mathcal{L} u }_{ V_{0}^{\prime} }  +  \norm{ u }_{ \Omega }  \right)^{1 - \tau}  \left(  \norm{ \mathcal{L} u }_{ V_{0}^{\prime} }  +  \norm{ u }_{  B_{R_1}(x_0)   }  \right)^{\tau}.
\end{align*}
The general case for $\omega \subset B \subset \Omega$ such that $B \setminus \omega$ does not touch the boundary is then obtained 
from the above by applying a covering argument, see \cite[Section 5]{ARRV09} or also \cite{R91}. 
\end{proof}

\section{Reference solution for jumping shear modulus in a plane}\label{section:refsol_jump}

For $d=2$ we have 
\[
\sigma(u) 
= 
\begin{pmatrix}
2 \mu \partial_x u_1 + \lambda \left( \partial_x u_1 + \partial_y u_2 \right)  &   \mu \left( \partial_y u_1 + \partial_x u_2 \right) \\
\mu \left( \partial_x u_2 + \partial_y u_1  \right)      & 2 \mu \partial_y u_2 + \lambda \left( \partial_x u_1 + \partial_y u_2 \right) 
\end{pmatrix}.
\]
Hence, 
\[
\sigma(u) \cdot \mathbf{n} = 
\begin{pmatrix}
\mu \left( \partial_y u_1 + \partial_x u_2 \right)  
 \\
2 \mu \partial_y u_2 + \lambda \left( \partial_x u_1 + \partial_y u_2 \right) 
\end{pmatrix}.
\]
Let us write $u = (u_1,u_2) \in [H^{1}(\Omega)]^2$ and split the components corresponding to the subdomains:
\[
u_1 = 
\begin{cases}
	u_{1}^{+}, & \text{for } y > \eta, \\
	u_{1}^{-}, & \text{for } y <  \eta,
\end{cases}
\qquad 
u_2 = 
\begin{cases}
	u_{2}^{+}, & \text{for } y > \eta, \\
	u_{2}^{-}, & \text{for } y <  \eta.
\end{cases}
\]
The condition $\jump{u}_{\Gamma} = 0$ translates to 
\bel{eq:jump-u-condition}
u_{1}^{+} = u_{1}^{-} \text{ at } y=\eta, \qquad 
u_{2}^{+} = u_{2}^{-} \text{ at } y=\eta. 
\ee

For $\jump{\sigma(u) \cdot \mathbf{n}}_{\Gamma} = 0$ we get 
\bel{eq:jump-sigma-u-condition-1}
\mu_{+} \left( \partial_y u_1^{+} + \partial_x u_2^{+} \right) 
= 
\mu_{-} \left( \partial_y u_1^{-} + \partial_x u_2^{-} \right), 
\quad 
\text{ at } y=\eta. 
\ee
and 
\bel{eq:jump-sigma-u-condition-2}
2 \mu_{+} \partial_y u_2^{+} + \lambda \left( \partial_x u_1^{+} + \partial_y u_2^{+} \right) 
=  2 \mu_{-} \partial_y u_2^{-} + \lambda \left( \partial_x u_1^{-} + \partial_y u_2^{-} \right) \text{ at } y=\eta. 
\ee
In view of \cref{eq:refsol_jump} the conditions \cref{eq:jump-u-condition} become
\bel{eq:jump-u-condition-plugged-in}
a_1 + b_1 \eta + c_1 \eta^2 = 1, \qquad 
a_2 + b_2 \eta + c_2 \eta^2 = 1. 
\ee
Using $\partial_y u_1^{-} = 0$ at $ y=\eta$ the condition \cref{eq:jump-sigma-u-condition-1} leads to
\[
\sin(k \pi x) \left[ \mu_{+} \left( b_1 + 2 c_1 \eta  \right) - \mu_{+} k \pi + \mu_{-} k \pi   \right] = 0,
\]
so we need 
\bel{eq:jump-sigma-u-condition-1-pluggedin}
b_1 + 2 c_1 \eta = k \pi \frac{ ( \mu_{+} - \mu_{-}) }{ \mu_{+}  },
\ee
Plugging \cref{eq:refsol_jump} into \cref{eq:jump-sigma-u-condition-2} we obtain in view of $\partial_{y} u_{2}^{-} = 0$ at $y=\eta$ that 
\begin{align*}
& \left( 2 \mu_{+} + \lambda  \right) \partial_y u_{2}^{+} + \lambda \partial_{x} u_{1}^{+} - \lambda \partial_{x} u_{1}^{-} = 0 \\ 
& \Leftrightarrow 
\left( 2 \mu_{+} + \lambda  \right) (b_2 + 2 c_2 \eta) \cos( k \pi x) + \lambda k \pi \cos(k \pi x) -  \lambda k \pi \cos(k \pi x) = 0.
\end{align*}
Hence,  
\bel{eq:jump-sigma-u-condition-2-pluggedin}
b_2 + 2 c_2 \eta = 0 
\ee
is the condition we need.
By inspection, we see that the choices given in \cref{eq:abc-jump} allow to fulfill  
\cref{eq:jump-u-condition-plugged-in}, \cref{eq:jump-sigma-u-condition-1-pluggedin} and \cref{eq:jump-sigma-u-condition-2-pluggedin}.

%%=============================================%%
%% For submissions to Nature Portfolio Journals %%
%% please use the heading ``Extended Data''.   %%
%%=============================================%%

%%=============================================================%%
%% Sample for another appendix section			       %%
%%=============================================================%%

%% \section{Example of another appendix section}\label{secA2}%
%% Appendices may be used for helpful, supporting or essential material that would otherwise 
%% clutter, break up or be distracting to the text. Appendices can consist of sections, figures, 
%% tables and equations etc.

\end{appendices}

%%===========================================================================================%%
%% If you are submitting to one of the Nature Portfolio journals, using the eJP submission   %%
%% system, please include the references within the manuscript file itself. You may do this  %%
%% by copying the reference list from your .bbl file, paste it into the main manuscript .tex %%
%% file, and delete the associated \verb+\bibliography+ commands.                            %%
%%===========================================================================================%%

\bibliography{references}% common bib file
%% if required, the content of .bbl file can be included here once bbl is generated
%%\input sn-article.bbl

%% Default %%
%%\input sn-sample-bib.tex%

\end{document}